 \newtheorem{thm}{Theorem}[section]
 \newtheorem{cor}[thm]{Corollary}
 \newtheorem{lem}[thm]{Lemma}
 \newtheorem{prop}[thm]{Proposition}
 \theoremstyle{definition}
 \theoremstyle{remark}
 \newtheorem{rem}[thm]{Remark}
 \newtheorem*{ex}{Example}
 \numberwithin{equation}{section}
\title{Compatibility of a Jacobi structure and a Riemannian structure on a Lie algebroid}
\author{Yacine A\"it Amrane, Ahmed Zeglaoui}
\begin{document}

\maketitle

\begin{abstract}
\noindent In a preceding paper we introduced a notion of compatibility between a Jacobi structure and 
a Riemannian structure on a smooth manifold.
We proved that in the case of fundamental examples of Jacobi structures
: Poisson structures, contact structures and locally conformally
symplectic structures, we get respectively Riemann-Poisson
structures in the sense of M. Boucetta, $(1/2)$-Kenmotsu structures
and locally conformally K\"ahler structures. In this paper we are
generalizing this work to the framework of Lie algebroids.  \bigskip\\
{\bf 2010 MSC.} Primary 53C15; Secondary 53D05,53D10,53D17.\bigskip\\
{\bf Keywords.} Jacobi structure, Riemannian Poisson structure, Kenmotsu structure, 
Locally conformally K\"ahler structure, Lie algebroid.  
\end{abstract}
\bigskip\bigskip

\section{Introduction}

In \cite{boucetta1,boucetta2}, M. Boucetta introduced a notion of compatibility
between a Poisson structure and a Riemannian structure on a smooth manifold, he called  
such a pair of compatible structures a Riemann-Poisson structure. In our preceding paper
\cite{aitamrane-zeglaoui}, we generalized this notion to a notion of
compatibility between a Jacobi structure and a Riemannian
structure and proved that in the case of a contact structure and of
a locally conformally symplectic structure we get respectively a
$(1/2)$-Kensmotsu structure and a locally conformally K\"ahler
structure. 

By now, most of these classical geometric structures have been
considered on a Lie algebroid. For a Poisson structure on a
Lie algebroid see for instance \cite{cannas-weinstein,kosmann}. For
a Riemannian structure see \cite{boucetta3}. For almost complex
structures, Hermitian, locally conformally symplectic and locally
conformally K\"ahler structures, see \cite{ida-popescu1} and the
references therein. For contact Riemannian, almost contact
Riemannian and Kenmotsu structures, see \cite{ida-popescu2}.

The goal of the present paper is to generalize to the framework of Lie algebroids the results obtained by the authors on a smooth manifold \cite{aitamrane-zeglaoui}. We first introduce a notion of compatibility  between a Poisson structure and a Riemannian structure on a Lie algebroid that generalizes that same notion introduced by Boucetta on a smooth manifold \cite{boucetta1}, and then we extend this notion of compatibility to a Jacobi structure on a Lie algebroid.

The paper is organized as follows. In section 2,  we recall some basic notions about differential calculus on a Lie
algebroid, \cite{cannas-weinstein,kosmann,marle0}, recall the notion of a Poisson structure on a Lie
algebroid and establish some identities we will need later in the work. Then, we introduce the 
notion of contravariant Levi-Civita connection associated with a bivector
field on a (pseudo-)Riemannian Lie algebroid and a notion of
compatibility of the bivector field and the (pseudo-)Riemannian metric. The Lie algebroid equipped with such a pair of compatible bivector field and metric will be called (pseudo-)Riemannian Poisson Lie algebroid.  

In section 3, we consider a pair of a bivector field and a vector field on a Lie algebroid and 
associate with it a skew algebroid structure on the dual bundle. We compute 
the torsion and the Jacobiator of this skew algebroid; this gives conditions for it to be an almost Lie
algebroid and a Lie algebroid respectively. We investigate the cases of a contact Lie
algebroid and a locally conformally symplectic Lie algebroid. We prove using
tensors that a contact structure is a Jacobi structure that comes from an almost
cosymplectic structure and that a locally conformally symplectic structure
corresponds exactly to a Jacobi structure with a nondegenerate bivector
field, and then we prove that in these two cases the skew algebroid structure 
on the dual bundle is a Lie algebroid structure.

In the last section, we propose a notion of compatibility of a triple
composed of a bivector field, a vector field and a (pseudo-)Riemannian
metric on a skew algebroid. We show under certain conditions that this induces
a Jacobi structure on the skew algebroid. Then, after having recalled 
the notions of contact Riemannian Lie algebroid and (1/2)-Kenmotsu Lie
algebroid, we prove that the triple associated with the contact Riemannian Lie algebroid is
compatible if and only if the Riemannian contact Lie algebroid is (1/2)-Kenmotsu. Finally, we show that in the case of a Jacobi structure
associated with a locally conformally symplectic structure and a somehow
associated metric, the triple is compatible if and only if the locally conformally symplectic structure is locally
conformally K\"{a}hler.

\section{Riemannian Poisson Lie algebroids}

Let $M$ be a smooth manifold. For a vector bundle $A$ over $M$ we will denote by $\Gamma(A)$ the space of sections of $A$.

\subsection{Differential calculus on Lie algebroids}

An anchored vector bundle on $M$ is a vector bundle $A$ on $M$
together with a vector bundle morphism $\rho:A\rightarrow TM$. The
vector bundle morphism $\rho$, called the anchor, induces a map form
$\Gamma(A)$ to $\Gamma(TM)$ that is $C^\infty(M)$-linear. A skew
algebroid on $M$ is an anchored vector bundle $(A,\rho)$ on $M$
endowed with a skew symmetric $\mathbb{R}$-bilinear map
$$
[.,.]: \Gamma(A)\times \Gamma(A) \longrightarrow \Gamma(A)
$$
that satisfies the Leibniz identity
$$
[a,\varphi b]=\varphi [a,b] + (\rho(a)\cdot \varphi)b, \qquad \forall \varphi\in C^\infty(M),\; \forall a,b\in \Gamma(A).
$$
An almost Lie algebroid on the base manifold $M$ is a skew algebroid $(A,\rho,[.,.])$ such that the anchor
$\rho$ satisfies the property :
$$
\rho([a,b])= [\rho(a),\rho(b)], \qquad \forall a,b,\in \Gamma(A).
$$
The bracket on the right hand side of the equality being the usual
Lie bracket. A Lie algebroid on the base manifold $M$ is a skew
algebroid $(A,\rho,[.,.])$ such that the braket $[.,.]$ satisfies
the Jacobi identity :
$$
[[a,b],c]+[[b,c],a]+[[c,a],b]=0, \qquad \forall a,b,c\in \Gamma(A).
$$
It is well known that the Leibniz identity and the Jacobi identity
together imply that the anchor map is a Lie algebra morphism. Hence,
a Lie algebroid is necessarily an almost Lie algebroid.

For a vector bundle $A$ on $M$, we will denote by $A^{\star}$ the
dual vector bundle.  For a non negative integer $k$, denote by
$\Gamma(\wedge^k A)$ (resp. $\Gamma(\wedge^k A^\star)$) the space of
$A$-multivector fields of degree $k$ (resp. the space of $A$-forms
of degree $k$). Denote by $\Gamma(\wedge^\bullet A):=\bigoplus
\Gamma(\wedge^k A)$ the space of $A$-multivector fields and by
$\Gamma(\wedge^\bullet A^\star):=\bigoplus \Gamma(\wedge^k A^\star)$
that of $A$-forms. Equipped with the exterior product "$\wedge$",
they are graded algebras. We can clearly define the interior product
similarly to its definition for differential forms and multivector
fields on a smooth manifold.

Let $(A,\rho,[.,.])$ be a skew algebroid on $M$. We define the Lie
$A$-derivative with respect to a section $a\in \Gamma(A)$ to be the
unique graded endomorphism $\mathcal{L}^\rho_{a}$ of degree $0$ of
the graded algebra $\Gamma(\wedge^\bullet A^\star)$ which satisfies
the following properties :
\begin{equation*}
\mathcal{L}^\rho_{a}\varphi
:=\mathcal{L}_{\rho(a)}\varphi=\rho(a)\cdot \varphi
\end{equation*}
for any $\varphi\in C^\infty(M)$, and, for $k\in \mathbb{N}^{\star
}$,
\begin{equation*}
\mathcal{L}^\rho_{a}\eta (a_{1},\ldots ,a_{k})=\rho(a)\cdot \eta
(a_{1},\ldots ,a_{k})-\sum_{i=1}^{k}\eta (a_{1},\ldots
,[a,a_{i}],\ldots ,a_{k})
\end{equation*}
for any $\eta\in \Gamma(\wedge^k A^\star)$. The Lie $A$-derivative
with respect to $a\in \Gamma(A)$ can be extended to the dual algebra
$\Gamma(\wedge^\bullet A)$ as follows :
\begin{equation*}
\left\langle \mathcal{L}^\rho_{a}P,\eta \right\rangle =\rho (a)\cdot
\left\langle P,\eta \right\rangle -\left\langle
P,\mathcal{L}^\rho_{a}\eta \right\rangle ,
\end{equation*}
where $\left\langle .,.\right\rangle $ is the duality bracket
between $\Gamma(\wedge^\bullet A)$ and $\Gamma(\wedge^\bullet
A^\star)$. We clearly have $\mathcal{L}^\rho_{a}b= \left[
a,b\right]$ for $a,b\in\Gamma(A)$.

Likewise, we define the exterior $A$-differential $d_\rho$,
analoguous to the exterior differential of differential forms,
associated with the skew algebroid $(A,\rho,\left[ .,.\right])$ as
follows : for any $\varphi \in C^{\infty}(M)$,
\begin{equation*}
d_{\rho}\varphi (a)=\mathcal{L}^\rho_{a}\varphi=\rho(a)\cdot\varphi,
\end{equation*}
and, for  $k\geq 1$ and $\eta \in \Gamma(\wedge^k A^\star)$, we set
\begin{equation}\label{A-diff}
\begin{array}{lll}
d_{\rho}\eta (a_{0},\ldots ,a_{k}) & =
&\displaystyle\sum_{i=0}^{k}(-1)^{i}\rho (a_{i})\cdot \eta
(a_{0},\ldots ,\widehat{a_{i}},\ldots
,a_{k}) \\
 & & +\displaystyle\sum_{0\leq i< j\leq k}(-1)^{i+j}\eta
([a_{i},a_{j}],a_{0},\ldots ,\widehat{a_{i}},\ldots ,\widehat{a_{j}},\ldots
,a_{k}).
\end{array}
\end{equation}

The Lie $A$-derivative, the exterior $A$-differential and the
interior product satisfy the Cartan formula, i.e., for $a\in
\Gamma(A)$, we have :
\begin{equation}\label{cartan}
\mathcal{L}^\rho_a=i_a\circ d_\rho + d_\rho \circ i_a.
\end{equation}

Finally, recall that the Schouten-Nijenhuis bracket [.,.] on $A$ is
the unique $\mathbb{R}$-bilinear map $\Gamma(\wedge^\bullet A)\times
\Gamma(\wedge^\bullet A)\rightarrow \Gamma(\wedge^\bullet A)$ which
satisfies the following properties :
\begin{enumerate}
\item[(i)]  for $\varphi,\psi\in C^\infty(M)$,
$$
[\varphi,\psi]=0,
$$
\item[(ii)] for $a\in \Gamma(A)$ and  $P\in \Gamma(\wedge^k A)$,
$$
[a,P]=\mathcal{L}^\rho_a P,
$$
\item[(iii)] for $P\in \Gamma(\wedge^k A)$ and $Q\in \Gamma(\wedge^l A)$;
$$
[P,Q]=(-1)^{kl}[Q,P]
$$
\item[(iv)] for $P\in \Gamma(\wedge^k A)$, $Q\in \Gamma(\wedge^l A)$ and
$R\in \Gamma(\wedge^\bullet A)$,
$$
[P,Q\wedge R]=[P,Q]\wedge R +(-1)^{(k+1)l}Q\wedge [P,R].
$$
\end{enumerate}
Recall also that if the skew algebroid $(A,\rho,[.,.])$ is an almost
Lie algebroid then for any $\varphi\in C^\infty(M)$ we have
$d_\rho(d_\rho \varphi)=0$, and that, the skew algebroid
$(A,\rho,\left[ .,.\right])$ is a Lie algebroid if and only if
$d_\rho \circ d_\rho=0$, which is also equivalent to the
Schouten-Nijenhuis bracket satisfying the graded Jacobi identity
$$
(-1)^{kl}[[Q,R],P]+(-1)^{lr}[[R,P],Q]+(-1)^{rk}[[P,Q],R]=0,
$$
for any $P\in \Gamma(\wedge^k A)$, any $Q\in \Gamma(\wedge^l A)$ and
any $R\in \Gamma(\wedge^r A)$.

\subsection{Poisson Lie algebroids}\label{PoissonLieAlgebroids}

A Poisson skew (resp. almost Lie, Lie) algebroid
$(A,\rho,[.,.],\Pi)$ is a skew (resp. almost Lie, Lie) algebroid
$(A,\rho,[.,.])$ equipped with an $A$-bivector field $\Pi$ that
satisfies $[\Pi,\Pi]=0$. We also say that $\Pi$ is a Poisson
$A$-tensor.

Let $(A,\rho ,\left[ .,.\right] )$ be a skew algebroid on $M$. To
any $A$-bivector field $\Pi $, we associate vector bundle morphisms
$\sharp_\Pi: A^\star\longrightarrow A$ and $\rho_{\Pi}: A^\star
\longrightarrow TM$ defined by
\begin{equation}
\beta \left(\sharp_{\Pi} (\alpha)\right) := \Pi (\alpha, \beta) \qquad \textrm{ and }\qquad \rho_{\Pi}=\rho\circ\sharp_\Pi,
\end{equation}
and a bracket $\left[.,.\right]_\Pi$ on $\Gamma(A^\star)$ defined by
\begin{equation}\label{bracketPi}
[\alpha, \beta]_\Pi := \mathcal{L}^\rho_{\sharp_{\Pi}(\alpha)} \beta
- \mathcal{L}^\rho_{\sharp_{\Pi}(\beta)} \alpha - d_\rho
(\Pi(\alpha, \beta)),
\end{equation}
called the Koszul bracket associated with $\Pi$. Therefore, with a bivector field $\Pi$ on $A$ we
associate a skew algebroid structure $(\rho_\Pi,[.,.]_\Pi)$ on the dual bundle $A^\star$ of $A$.

\begin{prop}
Let $\Pi \in \Gamma(\wedge^2 A)$. For any $Q\in \Gamma(\wedge^k A)$,
we have
$$
d_{\rho_\Pi} Q=-\left[\Pi,Q\right].
$$
\end{prop}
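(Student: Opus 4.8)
Here is how I would organize the proof.

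The plan is to prove that the two $\mathbb{R}$-linear operators $d_{\rho_\Pi}$ and $-[\Pi,\cdot\,]$ coincide on $\Gamma(\wedge^\bullet A)$. Note first that $\Gamma(\wedge^k A)$ is nothing but the space of $A^\star$-forms of degree $k$ for the skew algebroid $(A^\star,\rho_\Pi,[.,.]_\Pi)$ associated with $\Pi$ at the end of Subsection~\ref{PoissonLieAlgebroids}, so that $d_{\rho_\Pi}$ does act on $\Gamma(\wedge^k A)$, while $-[\Pi,\cdot\,]$ is the operator $Q\mapsto -[\Pi,Q]$ given by the Schouten--Nijenhuis bracket on $A$. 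Both raise the degree by $1$. The key remark is that both are graded antiderivations of the exterior algebra $(\Gamma(\wedge^\bullet A),\wedge)$ of degree $+1$, with the same Koszul sign: for $d_{\rho_\Pi}$ this follows from the general exterior calculus set up above; for $-[\Pi,\cdot\,]$ it follows from property (iv) of the Schouten--Nijenhuis bracket applied with $P=\Pi$, $k=2$, since then $(-1)^{(k+1)l}=(-1)^{3l}=(-1)^{l}$. As both operators are first order, hence local, a partition-of-unity argument reduces the proof to checking the equality on a local generating set of the algebra $\Gamma(\wedge^\bullet A)$, namely on $C^\infty(M)$ and on $\Gamma(A)$.

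For $\varphi\in C^\infty(M)$, $d_{\rho_\Pi}\varphi$ is an $A^\star$-form of degree $1$, i.e. a section of $A$, and for every $\alpha\in\Gamma(A^\star)$ we have $\langle d_{\rho_\Pi}\varphi,\alpha\rangle=\rho_\Pi(\alpha)\cdot\varphi=\langle d_\rho\varphi,\sharp_\Pi(\alpha)\rangle=\Pi(\alpha,d_\rho\varphi)=-\langle\sharp_\Pi(d_\rho\varphi),\alpha\rangle$, hence $d_{\rho_\Pi}\varphi=-\sharp_\Pi(d_\rho\varphi)$. On the other hand, by skew symmetry of the Schouten--Nijenhuis bracket $[\Pi,\varphi]=[\varphi,\Pi]$, and writing $\Pi$ locally as a finite sum of decomposable bivectors $a\wedge b$ with $a,b\in\Gamma(A)$ one gets, from the defining properties of the bracket, $[\varphi,a\wedge b]=(\rho(a)\cdot\varphi)\,b-(\rho(b)\cdot\varphi)\,a=\sharp_{a\wedge b}(d_\rho\varphi)$. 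Summing over the decomposable pieces yields $[\Pi,\varphi]=\sharp_\Pi(d_\rho\varphi)=-d_{\rho_\Pi}\varphi$, which is the claim in degree $0$.

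The degree $1$ case is the crux. For $a\in\Gamma(A)$ the skew symmetry (iii) and property (ii) give $[\Pi,a]=[a,\Pi]=\mathcal{L}^\rho_a\Pi$, so the assertion becomes $d_{\rho_\Pi}a=-\mathcal{L}^\rho_a\Pi$, an identity between $A^\star$-forms of degree $2$, i.e. between $A$-bivector fields; I would check it by evaluating both sides on arbitrary $\alpha,\beta\in\Gamma(A^\star)$. For the left hand side, formula \eqref{A-diff} for the skew algebroid $(A^\star,\rho_\Pi,[.,.]_\Pi)$ gives $d_{\rho_\Pi}a(\alpha,\beta)=\rho_\Pi(\alpha)\cdot\langle a,\beta\rangle-\rho_\Pi(\beta)\cdot\langle a,\alpha\rangle-\langle a,[\alpha,\beta]_\Pi\rangle$; inserting the Koszul bracket \eqref{bracketPi} and expanding $\mathcal{L}^\rho$ on $1$-forms, the terms containing $\rho_\Pi$ cancel and one is left with $\beta([\sharp_\Pi(\alpha),a])-\alpha([\sharp_\Pi(\beta),a])+\rho(a)\cdot\Pi(\alpha,\beta)$. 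For the right hand side, the duality definition of $\mathcal{L}^\rho_a$ on $\Gamma(\wedge^\bullet A)$ gives $(\mathcal{L}^\rho_a\Pi)(\alpha,\beta)=\rho(a)\cdot\Pi(\alpha,\beta)-\Pi(\mathcal{L}^\rho_a\alpha,\beta)-\Pi(\alpha,\mathcal{L}^\rho_a\beta)$, and expanding each $\Pi(\mathcal{L}^\rho_a\alpha,\beta)$ via the explicit formula for $\mathcal{L}^\rho_a$ acting on $1$-forms produces exactly the negative of the previous expression. The main obstacle is that this last step is a pure but error-prone sign-and-bookkeeping computation: one must track carefully how $\mathcal{L}^\rho_a$ and the anchor pass through $\sharp_\Pi$ and through the contractions, and in the course of it an intermediate term $2\,\rho(a)\cdot\Pi(\alpha,\beta)$ shows up on both sides, which may at first glance look like an inconsistency but cancels correctly. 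Once the equality holds on $C^\infty(M)$ and on $\Gamma(A)$, the antiderivation property established in the first step propagates it to all of $\Gamma(\wedge^\bullet A)$, completing the proof.
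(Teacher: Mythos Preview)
Your proof is correct and follows exactly the standard route: identify both $d_{\rho_\Pi}$ and $-[\Pi,\cdot\,]$ as degree~$+1$ graded derivations of $(\Gamma(\wedge^\bullet A),\wedge)$, then check equality on the generators $C^\infty(M)$ and $\Gamma(A)$. The paper does not give its own argument but simply refers to the classical case in Vaisman~\cite[Prop.~4.3]{vaisman}, which proceeds along precisely these lines; so your approach is the same as the one the paper intends.
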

\begin{proof}
The proof is similar to that in the classical case in
\cite[Prop.4.3]{vaisman}.
\end{proof}

In particular with $Q=\Pi$, using the formula (\ref{A-diff}) for the
differential $d_{\rho_\Pi}$ we get the identity
\begin{equation}\label{CrochetPiPi}
[\Pi,\Pi](\alpha,\beta ,\gamma)=-\oint \rho_\Pi(\alpha)\cdot\Pi(\beta,\gamma)+ 
\oint \Pi \left([\alpha,\beta]_\Pi,\gamma \right), 
\end{equation}
where the symbol $\oint$ means the cyclic sum in $\alpha,\beta,\gamma$.  
Using this identity we get the following theorem that gives the
torsion of the skew algebroid $(A^\star,\rho_\Pi,[.,.]_\Pi)$, i.e.
the default for the skew algebroid $(A^\star,\rho_\Pi,[.,.]_\Pi)$ to
be an almost Lie algebroid.

\begin{thm}\label{TorsionPi}
We have the identity
\begin{equation}  \label{torsion-alg-alt-asso-Pi}
\gamma \left(\sharp_\Pi\left( \left[ \alpha ,\beta \right]_{\Pi
}\right) -\left[\sharp_\Pi(\alpha) ,\sharp_\Pi(\beta) \right] \right)
=\frac{1}{2}\left[ \Pi ,\Pi \right](\alpha ,\beta ,\gamma).
\end{equation}
Therefore, if $(A,\rho,[ .,.],\Pi)$ is a Poisson almost Lie
algebroid, the skew algebroid $(A^\star, \rho_\Pi,[.,.]_\Pi)$
associated with is an almost Lie algebroid.
\end{thm}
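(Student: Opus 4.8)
The plan is to prove the identity \eqref{torsion-alg-alt-asso-Pi} by a direct computation, pairing the bundle-map expression on the left against an arbitrary $A$-form $\gamma$ and unwinding the definitions of $\sharp_\Pi$, $[.,.]_\Pi$ and $\rho_\Pi$, and then to read off the "almost Lie" conclusion from it. First I would rewrite the left-hand side as
$$
\gamma\bigl(\sharp_\Pi([\alpha,\beta]_\Pi)\bigr) - \gamma\bigl([\sharp_\Pi(\alpha),\sharp_\Pi(\beta)]\bigr)
= \Pi\bigl([\alpha,\beta]_\Pi,\gamma\bigr) - \gamma\bigl([\sharp_\Pi(\alpha),\sharp_\Pi(\beta)]\bigr),
$$
using the defining relation $\gamma(\sharp_\Pi(\delta)) = \Pi(\delta,\gamma)$. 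The first term is then handled by expanding the Koszul bracket \eqref{bracketPi}: $\Pi([\alpha,\beta]_\Pi,\gamma)$ becomes $\Pi(\mathcal{L}^\rho_{\sharp_\Pi(\alpha)}\beta,\gamma) - \Pi(\mathcal{L}^\rho_{\sharp_\Pi(\beta)}\alpha,\gamma) - \Pi(d_\rho(\Pi(\alpha,\beta)),\gamma)$, and each of these is further expanded using the Leibniz-type rule for $\mathcal{L}^\rho$ acting on the pairing $\langle\Pi,\beta\wedge\gamma\rangle$ and the formula $d_\rho\varphi(a)=\rho(a)\cdot\varphi$.

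The key bookkeeping step is to recognize the right-hand side through \eqref{CrochetPiPi}. That identity already expresses $[\Pi,\Pi](\alpha,\beta,\gamma)$ as $-\oint \rho_\Pi(\alpha)\cdot\Pi(\beta,\gamma) + \oint \Pi([\alpha,\beta]_\Pi,\gamma)$; so after expanding the cyclic sums there and comparing term by term with what I obtained for the left-hand side, the claim reduces to showing that the "extra" terms — precisely the ones involving the ordinary Lie bracket $[\sharp_\Pi(\alpha),\sharp_\Pi(\beta)]$ on $TM$ and the derivative terms $\rho(\sharp_\Pi(\alpha))\cdot\Pi(\beta,\gamma)$ — combine to reproduce exactly half of \eqref{CrochetPiPi}. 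Concretely, $\gamma([\sharp_\Pi(\alpha),\sharp_\Pi(\beta)])$ should be rewritten via the identity $\mathcal{L}^\rho_{\sharp_\Pi(\alpha)}\mathcal{L}^\rho_{\sharp_\Pi(\beta)}\gamma - \mathcal{L}^\rho_{\sharp_\Pi(\beta)}\mathcal{L}^\rho_{\sharp_\Pi(\alpha)}\gamma = \mathcal{L}^\rho_{[\sharp_\Pi(\alpha),\sharp_\Pi(\beta)]}\gamma$, which holds because $A$ is assumed to be an (almost) Lie algebroid so that $\rho$ is a bracket morphism; this is the one place where the almost-Lie hypothesis on $A$ enters, and it is what lets us trade the $TM$-bracket for $A$-Lie-derivatives that match the Koszul-bracket expansion.

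I expect the main obstacle to be purely organizational: keeping the three cyclic contributions, the six $\mathcal{L}^\rho$-expansions, and the sign conventions in $d_\rho$ and in the Schouten bracket consistent long enough to see the cancellation down to the factor $\tfrac12$. A clean way to contain this is to prove the identity first for exact forms $\alpha=d_\rho f$, $\beta=d_\rho g$, $\gamma=d_\rho h$ (where $\sharp_\Pi(d_\rho f)$ is the Hamiltonian section of $f$ and several derivative terms collapse), check that both sides are $C^\infty(M)$-linear and skew-symmetric tensors in $(\alpha,\beta,\gamma)$ — the left side because $[.,.]_\Pi$ differs from a tensorial expression only by terms that vanish under the pairing, the right side because $[\Pi,\Pi]$ is a genuine trivector — and conclude by the usual tensoriality/density argument. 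Once \eqref{torsion-alg-alt-asso-Pi} is established, the final assertion is immediate: if $[\Pi,\Pi]=0$ then the right-hand side vanishes for all $\gamma$, hence $\sharp_\Pi([\alpha,\beta]_\Pi)=[\sharp_\Pi(\alpha),\sharp_\Pi(\beta)]$, i.e. $\sharp_\Pi$ intertwines $[.,.]_\Pi$ with the $A$-bracket; composing with $\rho$ and using $\rho_\Pi=\rho\circ\sharp_\Pi$ together with the fact that $\rho$ is a bracket morphism (valid since $A$ is almost Lie) gives $\rho_\Pi([\alpha,\beta]_\Pi)=[\rho_\Pi(\alpha),\rho_\Pi(\beta)]$, which is exactly the condition for $(A^\star,\rho_\Pi,[.,.]_\Pi)$ to be an almost Lie algebroid.
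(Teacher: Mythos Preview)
Your plan has two genuine gaps.

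First, the identity \eqref{torsion-alg-alt-asso-Pi} is stated (and holds) for an arbitrary \emph{skew} algebroid; the almost-Lie hypothesis is only invoked for the second sentence of the theorem. Your proposed route rewrites $\gamma([\sharp_\Pi(\alpha),\sharp_\Pi(\beta)])$ via the commutation rule $[\mathcal{L}^\rho_a,\mathcal{L}^\rho_b]=\mathcal{L}^\rho_{[a,b]}$, but this rule is \emph{equivalent} to $\rho$ being a bracket morphism, i.e.\ to the almost-Lie condition. So you would only establish the identity under an extra hypothesis that the theorem does not assume. The term $\gamma([\sharp_\Pi(\alpha),\sharp_\Pi(\beta)])$ should instead be expanded directly as $\rho_\Pi(\alpha)\cdot\gamma(\sharp_\Pi(\beta)) - (\mathcal{L}^\rho_{\sharp_\Pi(\alpha)}\gamma)(\sharp_\Pi(\beta))$; no commutator of Lie derivatives is needed.

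Second, the ``prove it for exact forms and extend by tensoriality'' shortcut fails for a general algebroid: the forms $d_\rho f=\rho^\ast(df)$ lie in the image of $\rho^\ast:T^\ast M\to A^\ast$, and unless $\rho$ is fibrewise injective this image does not span $A^\ast$. Both sides are indeed $C^\infty(M)$-trilinear (your argument for that is fine), but agreement on exact forms is not enough to conclude.

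The paper's argument avoids both problems and is organisationally much lighter than the term-by-term matching you anticipate. Setting $\Phi(\alpha,\beta,\gamma)$ equal to the left-hand side of \eqref{torsion-alg-alt-asso-Pi}, one expands it once to obtain the manifestly cyclic expression
\[
\Phi(\alpha,\beta,\gamma)=\oint\rho_\Pi(\alpha)\cdot\Pi(\beta,\gamma)-\oint\alpha\bigl([\sharp_\Pi(\beta),\sharp_\Pi(\gamma)]\bigr),
\]
from which one reads off that $\Phi$ is alternating in all three arguments. Comparing with \eqref{CrochetPiPi} and the definition of $\Phi$ gives $\Phi=-[\Pi,\Pi]+\oint\Phi$; since $\Phi$ is alternating the cyclic sum is $3\Phi$, hence $[\Pi,\Pi]=2\Phi$. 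No almost-Lie assumption, no density argument, and the factor $\tfrac12$ drops out structurally rather than from cancellations.
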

\begin{proof}
For $\alpha, \beta,\gamma \in \Gamma(A^\star)$, put
$$
\Phi(\alpha,\beta,\gamma):=\gamma\left(\sharp_\Pi([\alpha,\beta]_\Pi)-[\sharp_\Pi(\alpha),\sharp_\Pi(\beta)]\right).
$$
Clearly we have
$\Phi(\alpha,\beta,\gamma)=-\Phi(\beta,\alpha,\gamma)$, i.e. $\Phi$
is skew symmetric in the two first variables. Now, since
$\gamma\left(\sharp_\Pi([\alpha,\beta]_\Pi)\right)
=-[\alpha,\beta]_\Pi\left(\sharp_\Pi(\gamma)\right)$, by
(\ref{bracketPi}) we get
$$
\gamma\left(\sharp_\Pi([\alpha,\beta]_\Pi\right)
=-\mathcal{L}^\rho_{\sharp_{\Pi}(\alpha)}
\beta\left(\sharp_\Pi(\gamma)\right) +
\mathcal{L}^\rho_{\sharp_{\Pi}(\beta)}
\alpha\left(\sharp_\Pi(\gamma)\right) + d_\rho (\Pi(\alpha,
\beta))\left(\sharp_\Pi(\gamma)\right),
$$
and therefore
$$
\begin{array}{lll}
\Phi(\alpha,\beta,\gamma)&=&\rho_\Pi(\alpha)\cdot\Pi(\beta,\gamma)+\rho_\Pi(\beta)\cdot\Pi(\gamma,\alpha)
+\rho_\Pi(\gamma)\cdot\Pi(\alpha,\beta) \\ &&
-\alpha\left([\sharp_\Pi(\beta),\sharp_\Pi(\gamma)]\right)
-\beta\left([\sharp_\Pi(\gamma),\sharp_\Pi(\alpha)]\right)
-\gamma\left([\sharp_\Pi(\alpha),\sharp_\Pi(\beta)]\right)
\end{array}
$$
from which we deduce, on one hand, that
$\Phi(\alpha,\gamma,\beta)=-\Phi(\alpha,\beta,\gamma)$, and on the
other hand, using the identity (\ref{CrochetPiPi}) and the
definition of the mapping $\Phi$, that
$$
\Phi(\alpha,\beta,\gamma)=-[\Pi,\Pi](\alpha,\beta,\gamma)+\Phi(\alpha,\beta,\gamma)
+\Phi(\beta,\gamma,\alpha)+\Phi(\gamma,\alpha,\beta).
$$
Hence, since $\Phi$ is an alternating map,
$[\Pi,\Pi](\alpha,\beta,\gamma)=2\Phi(\alpha,\beta,\gamma)$.
 \end{proof}

\begin{rem}\label{symp-Lie-algebroid}
A symplectic form on a skew (resp. almost Lie, Lie) algebroid
$(A,\rho,[.,.])$, or a symplectic $A$-form, is a nondegenerate
$A$-$2$-form $\Omega\in \Gamma(\wedge^2 A^\star)$ such that
$d_{\rho}\Omega =0$. We call $(A,\rho,[.,.],\Omega)$ a symplectic
skew (resp. almost Lie, Lie) algebroid. Let $(A,\rho,[.,.])$ be a
skew algebroid, $\Omega\in\Gamma(\wedge^2 A^\star)$ be a
nondegenerate $2$-form on $A$, $\sharp_\Omega$ be the inverse
isomorphism of the vector bundle isomorphism $\flat_\Omega:
A\rightarrow A^\star$, $\flat_\Omega(a)=-i_a\Omega$, and $\Pi$ be
the $A$-bivector field defined by
$$
\Pi(\alpha,\beta)=\Omega(\sharp_{\Omega}(\alpha),\sharp_\Omega(\beta)).
$$
We have $\sharp_\Pi=\sharp_\Omega$, and using the formulae
(\ref{CrochetPiPi}) and (\ref{torsion-alg-alt-asso-Pi}) we get the
identity
$$
[\Pi,\Pi](\alpha,\beta,\gamma)=2d_\rho\Omega(\sharp_\Omega(\alpha),\sharp_\Omega(\beta),\sharp_\Omega(\gamma)).
$$
Hence, $\Pi$ is a Poisson $A$-tensor if and only if $\Omega$ is a
symplectic $A$-form.
\end{rem}

Denote by $J_\Pi$ the Jacobiator of the skew algebroid
$(A^\star,\rho_\Pi,[.,.]_\Pi)$, i.e., 
$$
J_\Pi(\alpha,\beta,\gamma)=\oint [[\alpha,\beta]_\Pi,\gamma]_\Pi, 
$$  
for $\alpha,\beta,\gamma\in \Gamma(A^\star)$. The following result gives for a Lie algebroid $(A,\rho,[.,.])$ 
together with an $A$-bivector field $\Pi$ the Jacobiator 
of the skew algebroid $(A^\star,\rho_\Pi,[.,.]_\Pi)$, i.e. the default for  
$(A^\star,\rho_\Pi,[.,.]_\Pi)$ to be a Lie algebroid.  

\begin{thm}\label{JacobiatorPi}
Assume that $(A,\rho,\Pi)$  is a Lie algebroid. Then
$$
J_\Pi(\alpha,\beta,\gamma)=\oint \mathcal{L}^\rho_{\sharp_\Pi([\alpha,\beta]_\Pi)-[\sharp_\Pi(\alpha),\sharp_\Pi(\beta)]}\gamma-\oint d_\rho\left(\Pi(\mathcal{L}^\rho_{\sharp_\Pi(\alpha)}\beta,\gamma) +\Pi(\beta,\mathcal{L}^\rho_{\sharp_\Pi(\alpha)}\gamma)\right). 
$$
\end{thm}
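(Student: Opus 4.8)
The plan is to compute $J_\Pi(\alpha,\beta,\gamma)=\oint[[\alpha,\beta]_\Pi,\gamma]_\Pi$ by expanding each inner and outer bracket with the definition (\ref{bracketPi}), and then to collect the resulting terms into the two groups that appear on the right-hand side. Concretely, writing $[\alpha,\beta]_\Pi = \mathcal{L}^\rho_{\sharp_\Pi(\alpha)}\beta - \mathcal{L}^\rho_{\sharp_\Pi(\beta)}\alpha - d_\rho(\Pi(\alpha,\beta))$, I would first apply the outer Koszul bracket to the pair $([\alpha,\beta]_\Pi,\gamma)$, which produces three kinds of contributions: a term $\mathcal{L}^\rho_{\sharp_\Pi([\alpha,\beta]_\Pi)}\gamma$, a term $-\mathcal{L}^\rho_{\sharp_\Pi(\gamma)}[\alpha,\beta]_\Pi$, and a term $-d_\rho\bigl(\Pi([\alpha,\beta]_\Pi,\gamma)\bigr)$. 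Taking the cyclic sum, the first family of terms already looks like $\oint \mathcal{L}^\rho_{\sharp_\Pi([\alpha,\beta]_\Pi)}\gamma$, and the point will be to show that the remaining terms reorganize into $-\oint \mathcal{L}^\rho_{[\sharp_\Pi(\alpha),\sharp_\Pi(\beta)]}\gamma$ plus the $d_\rho$-exact correction.

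The key computational inputs are: the hypothesis that $(A,\rho,[.,.])$ is a Lie algebroid together with $[\Pi,\Pi]=0$ (so in particular $d_\rho\circ d_\rho=0$, $\mathcal{L}^\rho$ satisfies $[\mathcal{L}^\rho_a,\mathcal{L}^\rho_b]=\mathcal{L}^\rho_{[a,b]}$, and $\mathcal{L}^\rho_a d_\rho = d_\rho \mathcal{L}^\rho_a$); the identity (\ref{torsion-alg-alt-asso-Pi}) of Theorem \ref{TorsionPi}, which in the Poisson case forces $\sharp_\Pi([\alpha,\beta]_\Pi) = [\sharp_\Pi(\alpha),\sharp_\Pi(\beta)]$, so that the very quantity $\sharp_\Pi([\alpha,\beta]_\Pi)-[\sharp_\Pi(\alpha),\sharp_\Pi(\beta)]$ in the statement vanishes — but one must keep it formally, since the cleanest route is to prove the displayed identity as an algebraic identity before using $[\Pi,\Pi]=0$; and the behaviour of $d_\rho$ under the Koszul bracket, namely $d_\rho(\Pi(\sigma,\tau))$ appearing inside $\sharp_\Pi$ and inside $\mathcal{L}^\rho$. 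I would expand $-\mathcal{L}^\rho_{\sharp_\Pi(\gamma)}[\alpha,\beta]_\Pi$ using linearity of $\mathcal{L}^\rho$ over the three pieces of $[\alpha,\beta]_\Pi$, then use $\mathcal{L}^\rho_{\sharp_\Pi(\gamma)}\mathcal{L}^\rho_{\sharp_\Pi(\alpha)}\beta = \mathcal{L}^\rho_{[\sharp_\Pi(\gamma),\sharp_\Pi(\alpha)]}\beta + \mathcal{L}^\rho_{\sharp_\Pi(\alpha)}\mathcal{L}^\rho_{\sharp_\Pi(\gamma)}\beta$ and the commutation of $\mathcal{L}^\rho$ with $d_\rho$ to move things around; many of the double-Lie-derivative terms will cancel in cyclic pairs, and the $[\sharp_\Pi(\cdot),\sharp_\Pi(\cdot)]$-Lie-derivative terms will combine with the torsion term.

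The term $-\oint d_\rho\bigl(\Pi([\alpha,\beta]_\Pi,\gamma)\bigr)$ is the source of the $d_\rho$-exact part of the answer: substituting $[\alpha,\beta]_\Pi$ and using $\Pi(d_\rho\varphi,\gamma) = -\rho_\Pi(\gamma)\cdot\varphi$ together with $d_\rho\circ d_\rho = 0$, the piece coming from $-d_\rho(\Pi(d_\rho(\Pi(\alpha,\beta)),\gamma))$ turns into a Lie-derivative term $d_\rho(\rho_\Pi(\gamma)\cdot \Pi(\alpha,\beta))$ which should recombine with other contributions, while the pieces $-d_\rho(\Pi(\mathcal{L}^\rho_{\sharp_\Pi(\alpha)}\beta,\gamma)) + d_\rho(\Pi(\mathcal{L}^\rho_{\sharp_\Pi(\beta)}\alpha,\gamma))$ are exactly (after using the cyclic sum to symmetrize and relabel) the stated $-\oint d_\rho\bigl(\Pi(\mathcal{L}^\rho_{\sharp_\Pi(\alpha)}\beta,\gamma) + \Pi(\beta,\mathcal{L}^\rho_{\sharp_\Pi(\alpha)}\gamma)\bigr)$. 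The main obstacle, and where I would spend the most care, is the bookkeeping: tracking the roughly fifteen to twenty scalar- and form-valued terms produced by the double expansion, identifying which cancel by skew-symmetry or by $d_\rho^2=0$, which cancel against each other under the cyclic sum, and which assemble into the two clean groups — it is entirely mechanical given the commutation rules above, but it is easy to drop a sign. A useful sanity check along the way is Remark \ref{symp-Lie-algebroid}: specializing to a nondegenerate $\Pi$ coming from a symplectic $A$-form should make both right-hand groups vanish, consistent with $(A^\star,\rho_\Pi,[.,.]_\Pi)$ being a Lie algebroid in that case.
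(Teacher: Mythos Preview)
Your approach is essentially the paper's: expand $[[\alpha,\beta]_\Pi,\gamma]_\Pi$ via the Koszul bracket, use the Lie algebroid identities $[\mathcal{L}^\rho_a,\mathcal{L}^\rho_b]=\mathcal{L}^\rho_{[a,b]}$ and $\mathcal{L}^\rho_a d_\rho = d_\rho \mathcal{L}^\rho_a$, and take the cyclic sum. The paper organizes the computation slightly more efficiently by first proving the clean auxiliary identity $[d_\rho\varphi,\gamma]_\Pi = \mathcal{L}^\rho_{\sharp_\Pi(d_\rho\varphi)}\gamma$ (the other two pieces of the Koszul bracket cancel), which absorbs your ``$d_\rho(\rho_\Pi(\gamma)\cdot\Pi(\alpha,\beta))$ recombines with other contributions'' step and cuts the bookkeeping roughly in half; after that, the double bracket reduces directly to four terms (equation (\ref{bracketPiPi})) and the cyclic sum is immediate.

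One correction on the hypotheses: the statement does \emph{not} assume $[\Pi,\Pi]=0$. The notation ``$(A,\rho,\Pi)$ is a Lie algebroid'' just means the underlying skew algebroid $(A,\rho,[.,.])$ is a Lie algebroid, with $\Pi$ an arbitrary $A$-bivector field. The formula for $J_\Pi$ is meant to hold for general $\Pi$; the Poisson condition only enters in the subsequent corollary, where Theorem~\ref{TorsionPi} is invoked to kill the first cyclic sum and identity~(\ref{CrochetPiPi}) to kill the second. You already planned to prove the identity ``formally'' without using $[\Pi,\Pi]=0$, so your argument goes through, but be aware that the first group of terms in the statement is genuinely nonzero in general and is not a formality.
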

\begin{proof}
First, by (\ref{bracketPi}), for $\alpha,\beta,\gamma \in\Gamma(A^\star)$, we have 
$$
[d_\rho(\Pi(\alpha,\beta)),\gamma]_\Pi=\mathcal{L}^\rho_{\sharp_\Pi(d_\rho(\Pi(\alpha,\beta)))}\gamma-\mathcal{L}^\rho_{\sharp_\Pi(\gamma)}\left(d_\rho(\Pi(\alpha,\beta))\right)-d_\rho\left(\Pi(d_\rho(\Pi(\alpha,\beta)),\gamma)\right).
$$
Since $(A,\rho,[.,.])$ is a Lie algebroid, hence an almost Lie algebroid, we have 
$$
\mathcal{L}^\rho_{\sharp_\Pi(\gamma)}\left(d_\rho(\Pi(\alpha,\beta))\right)=d_\rho(\mathcal{L}^\rho_{\sharp_\Pi(\gamma)}\left(\Pi(\alpha,\beta)\right))=d_\rho(\rho_\Pi(\gamma)\cdot \Pi(\alpha,\beta)), 
$$
and since 
$$
\Pi(d_\rho(\Pi(\alpha,\beta)),\gamma)=-d_\rho(\Pi(\alpha,\beta))(\sharp_\Pi(\gamma))=-\rho_\Pi(\gamma)\cdot \Pi(\alpha,\beta),
$$
we deduce that 
$$
[d_\rho(\Pi(\alpha,\beta)),\gamma]_\Pi=\mathcal{L}^\rho_{\sharp_\Pi(d_\rho(\Pi(\alpha,\beta)))}\gamma.
$$
Consequently, by (\ref{bracketPi}), we get   
\begin{equation}\label{bracketPiPi}
\begin{array}{ll}
[[\alpha,\beta]_\Pi,\gamma]_\Pi = &
\mathcal{L}^\rho_{\sharp_\Pi([\alpha,\beta]_\Pi)}\gamma
-\mathcal{L}^\rho_{\sharp_\Pi(\gamma)}\left(\mathcal{L}^\rho_{\sharp_\Pi(\alpha)}\beta\right)
+\mathcal{L}^\rho_{\sharp_\Pi(\gamma)}\left(\mathcal{L}^\rho_{\sharp_\Pi(\beta)}\alpha\right)
\\ &
-d_\rho\left[\Pi(\mathcal{L}^\rho_{\sharp_\Pi(\alpha)}\beta,\gamma)
+\Pi(\gamma,\mathcal{L}^\rho_{\sharp_\Pi(\beta)}\alpha)\right]. 
\end{array}
\end{equation}
Since $(A,\rho,[.,.])$ is a Lie algebroid we have
$$
\mathcal{L}^\rho_{\sharp_\Pi(\gamma)}\left(\mathcal{L}^\rho_{\sharp_\Pi(\alpha)}\beta\right)
-\mathcal{L}^\rho_{\sharp_\Pi(\alpha)}\left(\mathcal{L}^\rho_{\sharp_\Pi(\gamma)}\beta\right)
=\mathcal{L}^\rho_{[\sharp_\Pi(\gamma),\sharp_\Pi(\alpha)]}\beta.
$$
Therefore, taking the cyclic sum in $\alpha,\beta,\gamma$ on the two sides of the equality (\ref{bracketPiPi}) we get the result. 
\end{proof}

\begin{cor}
If $(A,\rho,[.,.],\Pi)$ is a Poisson Lie algebroid, then
$(A^\star,\rho_\Pi,[.,.]_\Pi)$ is a Lie algebroid.
\end{cor}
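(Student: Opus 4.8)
The corollary will follow from Theorems~\ref{TorsionPi} and~\ref{JacobiatorPi} once we feed in the hypothesis $[\Pi,\Pi]=0$. First I would observe that, the right-hand side of (\ref{torsion-alg-alt-asso-Pi}) being zero, we get $\gamma\!\left(\sharp_\Pi([\alpha,\beta]_\Pi)-[\sharp_\Pi(\alpha),\sharp_\Pi(\beta)]\right)=0$ for all $\alpha,\beta,\gamma\in\Gamma(A^\star)$; since $\gamma$ is arbitrary this yields the pointwise identity $\sharp_\Pi([\alpha,\beta]_\Pi)=[\sharp_\Pi(\alpha),\sharp_\Pi(\beta)]$. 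Applying $\rho$ and using that $(A,\rho,[.,.])$ is (at least) an almost Lie algebroid gives $\rho_\Pi([\alpha,\beta]_\Pi)=[\rho_\Pi(\alpha),\rho_\Pi(\beta)]$, so $(A^\star,\rho_\Pi,[.,.]_\Pi)$ is already an almost Lie algebroid (this is also the second assertion of Theorem~\ref{TorsionPi}). It therefore only remains to check that the Koszul bracket $[.,.]_\Pi$ satisfies the Jacobi identity, i.e.\ that the Jacobiator $J_\Pi$ of Theorem~\ref{JacobiatorPi} vanishes identically.

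Since $(A,\rho,[.,.],\Pi)$ is a Poisson Lie algebroid, $(A,\rho,[.,.])$ is a Lie algebroid, so Theorem~\ref{JacobiatorPi} applies and expresses $J_\Pi$ as a sum of two cyclic sums. The first, $\oint\mathcal{L}^\rho_{\sharp_\Pi([\alpha,\beta]_\Pi)-[\sharp_\Pi(\alpha),\sharp_\Pi(\beta)]}\gamma$, vanishes immediately by the identity of the previous paragraph, the section by which we Lie-differentiate being zero. So the proof reduces to showing that
$$
\oint d_\rho\!\left(\Pi(\mathcal{L}^\rho_{\sharp_\Pi(\alpha)}\beta,\gamma)+\Pi(\beta,\mathcal{L}^\rho_{\sharp_\Pi(\alpha)}\gamma)\right)=0 .
$$

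The step that needs the most care is to recognize, after pulling $d_\rho$ out of the cyclic sum by $\mathbb{R}$-linearity, that
$$
\oint\left(\Pi(\mathcal{L}^\rho_{\sharp_\Pi(\alpha)}\beta,\gamma)+\Pi(\beta,\mathcal{L}^\rho_{\sharp_\Pi(\alpha)}\gamma)\right)=[\Pi,\Pi](\alpha,\beta,\gamma).
$$
To see this I would start from (\ref{CrochetPiPi}), substitute the definition (\ref{bracketPi}) of $[\alpha,\beta]_\Pi$ into the term $\oint\Pi([\alpha,\beta]_\Pi,\gamma)$, use $\Pi(d_\rho\varphi,\gamma)=-\rho_\Pi(\gamma)\cdot\varphi$ for $\varphi\in C^\infty(M)$, and reorganize the cyclic sums using only the skew-symmetry of $\Pi$ and the invariance of $\oint$ under cyclic relabeling; the Leibniz rule $(\mathcal{L}^\rho_{\sharp_\Pi(\alpha)}\Pi)(\beta,\gamma)=\rho_\Pi(\alpha)\cdot\Pi(\beta,\gamma)-\Pi(\mathcal{L}^\rho_{\sharp_\Pi(\alpha)}\beta,\gamma)-\Pi(\beta,\mathcal{L}^\rho_{\sharp_\Pi(\alpha)}\gamma)$ is a convenient intermediate bookkeeping device. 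Granting this identity, the hypothesis $[\Pi,\Pi]=0$ gives $\oint d_\rho(\cdots)=d_\rho\big([\Pi,\Pi](\alpha,\beta,\gamma)\big)=d_\rho(0)=0$, hence $J_\Pi\equiv 0$. Combined with the almost Lie property established in the first paragraph (or, equivalently, invoking that Leibniz together with Jacobi already forces anchor-compatibility), this shows $(A^\star,\rho_\Pi,[.,.]_\Pi)$ is a Lie algebroid.
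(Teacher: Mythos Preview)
Your proof is correct and follows essentially the same strategy as the paper: use Theorem~\ref{TorsionPi} together with $[\Pi,\Pi]=0$ to kill the first cyclic sum in the formula of Theorem~\ref{JacobiatorPi}, then show the remaining term is $d_\rho$ of (a multiple of) $[\Pi,\Pi](\alpha,\beta,\gamma)$, hence zero. The only difference is organizational: the paper expands $\Pi(\mathcal{L}^\rho_{\sharp_\Pi(\alpha)}\beta,\gamma)$ directly via the definition of the Lie $A$-derivative to reach $-2d_\rho\big[\oint\rho_\Pi(\alpha)\cdot\Pi(\beta,\gamma)-\oint\alpha([\sharp_\Pi(\beta),\sharp_\Pi(\gamma)])\big]$ and then invokes (\ref{CrochetPiPi}) and Theorem~\ref{TorsionPi}, whereas you substitute the Koszul bracket (\ref{bracketPi}) into (\ref{CrochetPiPi}) to identify the cyclic sum with $[\Pi,\Pi](\alpha,\beta,\gamma)$ in one step. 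Your identity $\oint(\Pi(\mathcal{L}^\rho_{\sharp_\Pi(\alpha)}\beta,\gamma)+\Pi(\beta,\mathcal{L}^\rho_{\sharp_\Pi(\alpha)}\gamma))=[\Pi,\Pi](\alpha,\beta,\gamma)$ is in fact also derived verbatim later in the paper, inside the proof of the theorem computing $J^\lambda_{\Pi,\xi}$, so both routes are fully consistent with the paper's toolkit.
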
 
\begin{proof}
We need to prove that $J_\Pi=0$. Since $[\Pi,\Pi]=0$, by Theorem \ref{TorsionPi} and the theorem above we have  
$$
J_\Pi(\alpha,\beta,\gamma)=-\oint d_\rho\left(\Pi(\mathcal{L}^\rho_{\sharp_\Pi(\alpha)}\beta,\gamma) +\Pi(\beta,\mathcal{L}^\rho_{\sharp_\Pi(\alpha)}\gamma)\right). 
$$ 
For any $\alpha,\beta,\gamma$ we have 
$$
\begin{array}{ll}
\Pi(\mathcal{L}^\rho_{\sharp_\Pi(\alpha)}\beta,\gamma) & =-\mathcal{L}^\rho_{\sharp_\Pi(\alpha)}\beta(\sharp_\Pi(\gamma))\\
 & =-\rho_\Pi(\alpha)(\beta(\sharp_\Pi(\gamma)))+\beta([\sharp_\Pi(\alpha),\sharp_\Pi(\gamma)]) \\
 & =\rho_\Pi(\alpha)\cdot \Pi(\beta,\gamma)-\beta([\sharp_\Pi(\gamma),\sharp_\Pi(\alpha)]).
\end{array}
$$
Substituting in the expression of $J_\Pi$, we get 
$$
J_\Pi(\alpha,\beta,\gamma)= -2d_\rho\left[\oint \rho_\Pi(\alpha)\cdot \Pi(\beta,\gamma) 
-\oint \alpha([\sharp_\Pi(\beta),\sharp_\Pi(\gamma)])\right].
$$
Since $[\Pi,\Pi]$=0, by (\ref{CrochetPiPi}) and Theorem \ref{TorsionPi}, it follows that $J_\Pi=0$. 
\end{proof}

When $A$ is the tangent Lie algebroid $TM$, the Lie algebroid
$(A^\star,\rho_\Pi,[.,.]_\Pi)$ is the cotangent Lie algebroid of the
Poisson manifold $(M,\Pi)$.

\subsection{Riemannian Poisson Lie algebroids}

\subsubsection{Contravariant $A$-connections associated with an $A$-bivector field}\label{contravariant-connection-Pi-g}

Let $(A,\rho,[.,.])$ be a skew algebroid. An affine connection on  $(A,\rho,[.,.])$,
or an affine $A$-connection, is an $\mathbb{R}$-bilinear map
$$
\begin{array}{rccc}
\nabla : &  \Gamma(A) \times \Gamma(A) & \longrightarrow & \Gamma(A) \\
& (a,b) & \mapsto & \nabla_a b
\end{array}
$$
verifying $\nabla_{\varphi a }b =\varphi \nabla_{a}b$ and
$\nabla_{a}(\varphi b)=\varphi \nabla_{a}b +\left( \rho(a).\varphi
\right) b$, for any $\varphi \in C^{\infty}(M) $ and $a,b \in
\Gamma(A)$. The $A$-torsion $T^{\nabla}$ of the connection $ \nabla$
is defined by
\begin{equation*}
T^{\nabla}(a,b) =\nabla_{a}b -\nabla_{b}a - [a,b]
\end{equation*}
for any $a,b\in \Gamma(A)$. The $A$-connection $\nabla$ is called
symmetric, or torsion free, if its $A$-torsion is null.

Let $\Pi$ be an $A$-bivector field. An affine connection on the skew
algebroid $(A^\star,\rho_\Pi,[.,.]_\Pi)$ associated with $\Pi$ will
be called a contravariant $A$-connection associated with $\Pi$. If
$\Pi $ is nondegenerate, the formula
$$
D_\alpha \beta = {\sharp_\Pi}^{-1}
\left(\nabla_{\sharp_\Pi(\alpha)}(\sharp_\Pi(\beta)) \right)
$$
establishes a one-one correspondence between the affine $A$-connections $\nabla$ and the
contravariant $A$-connections $D$ associated with the $A$-bivector field $\Pi$.

A (pseudo-)Riemannian skew (resp. almost Lie, Lie) algebroid is a quadruple $(A,\rho,[ .,.],g)$
where $(A,\rho,[.,.])$ is a skew (resp. almost Lie, Lie) algebroid and $g$ is a
(pseudo-)Riemannian metric on its underlying vector bundle $A$.

Let $(A,\rho,[.,.],g)$ be a pseudo-Riemannian skew algebroid. There
is a unique affine $A$-connection $\nabla$ which is symmetric, i.e.
$$
\nabla_a b - \nabla_b a=[a,b],
$$
and compatible with $g$, i.e.
$$
\rho(a) \cdot g(b,c)=g\left(\nabla_a b,c\right)+g\left(b,\nabla_a
c\right).
$$
It is entirely characterized by the Koszul formula :
\begin{equation}\label{koszul}
\begin{array}{lll}
2g\left(\nabla_a b ,c \right) \!\!&=&\!\! \rho(a)\cdot g(b,c)
+ \rho(b)\cdot g(a,c) - \rho(c)\cdot g(a,b)  \\
&& -g\left( [b,c],a \right)-g\left( [a,c],b \right)+g\left( [a,b],c \right) ,
\end{array}
\end{equation}
for any $a ,b,c \in \Gamma(A)$. We call $\nabla$ the Levi-Civita
$A$-connection associated with $g$, or the Levi-Civita connection of
the pseudo-Riemannian skew algebroid  $\left(A,\rho,[.,.],g\right)$.

Let $\flat_{g}$ be the vector bundle isomorphism $\Gamma(A)
\rightarrow \Gamma(A^{\star})$, $a \longmapsto g(a,.)$, and
$\sharp_g$ be the inverse isomorphism. The cometric $g^\star$ of $g$
is the pseudo-Riemannian metric on the dual bundle $A^\star$ defined
by
\begin{equation*}
g^\star(\alpha,\beta) = g(\sharp_g(\alpha),\sharp_g(\beta)), \qquad \alpha,\beta\in\Gamma(A^\star).
\end{equation*}
Let $\Pi$ be a bivector field on $A$ and let
$\left(A^\star,\rho_\Pi,[ .,.]_\Pi\right)$ be the associated skew
algebroid. The Levi-Civita connection $D$ of the pseudo-Riemannian
skew algebroid $(A^\star,\rho_\Pi,[.,.]_\Pi,g^\star)$ will be called
the contravariant Levi-Civita $A$-connection associated with the
pair $(\Pi,g)$.

\subsubsection{Compatibility of a bivector field and a 
Riemannian metric on a Lie algebroid} \label{Poisson-Riemann}

Let $(A,\rho,[.,.],g)$ be a pseudo-Riemannian skew algebroid and let
$\Pi$ ba a bivector field on $A$. We say that the metric $g$ is
compatible with the bivector field $\Pi$ or that the pair $(\Pi ,g)$
is compatible if $D\Pi =0$, i.e., if
\begin{equation*}
\rho_{\Pi }(\alpha )\cdot\Pi (\beta ,\gamma )=\Pi (D_{\alpha }\beta
,\gamma )+\Pi (\beta ,D_{\alpha }\gamma ),
\end{equation*}
for every $\alpha ,\beta ,\gamma \in \Gamma (A^{\star})$.

With the pair $(\Pi,g)$ we associate the vector bundle
endomorphisms $J$ of $A$ and $J^{\star}$ of $A^\star$ defined
respectively by
\begin{equation}\label{J et J star}
g\left(J\sharp_g(\alpha),\sharp_g(\beta)\right)=\Pi \left(\alpha,\beta \right)
\qquad\textrm{ and }\qquad g^\star\left(J^\star\alpha,\beta\right)=\Pi\left(\alpha,\beta\right).
\end{equation}
Since $Dg^\star=0$, we have, for any $\alpha,\beta,\gamma\in \Gamma
(A^{\star})$,
$$
\begin{array}{ll}
g^\star(\alpha,D_\gamma J^\star(\beta)) &=g^\star(\alpha,D_\gamma
(J^\star\beta))-g^\star(\alpha,J^\star(D_\gamma\beta)) \\
& =\rho_{\Pi}(\gamma)\cdot
g^\star(\alpha,J^\star\beta)-g^\star(D_\gamma\alpha,J^\star\beta)
-g^\star(\alpha,J^\star(D_\gamma\beta)),
\end{array}
$$
and therefore, by the second equality in (\ref{J et J star}),
\begin{equation}\label{DJDPi}
g^\star(\alpha,D_\gamma J^\star(\beta))=-\rho_{\Pi}\left( \gamma
\right)\cdot \Pi \left(\alpha,\beta\right)+\Pi \left(D_\gamma
\alpha,\beta \right) +\Pi \left( \alpha ,D_{\gamma }\beta \right).
\end{equation}
Hence $g^\star(\alpha,D_\gamma
J^\star(\beta))=-D_\gamma\Pi(\alpha,\beta)$, from which we deduce
that the pair $(\Pi ,g) $ is compatible if and only if $DJ^{\star
}=0$, or equivalently $DJ=0$.

\begin{prop}\label{bivector-compatibility-Poisson}
Let $\left(A,\rho,[ .,.] ,g\right)$ be a pseudo-Riemannian skew
algebroid and let $\Pi$ be an $A$-bivector field. If the pair $(\Pi
,g)$ is compatible then $\left(A,\rho ,[.,.] ,\Pi \right) $ is a
Poisson skew algebroid.
\end{prop}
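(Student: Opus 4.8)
The plan is to prove directly that $[\Pi,\Pi]=0$; since a Poisson skew algebroid is, by definition, a skew algebroid equipped with an $A$-bivector field whose Schouten square vanishes, this is all that is needed. The natural starting point is the identity (\ref{CrochetPiPi}), which expresses $[\Pi,\Pi](\alpha,\beta,\gamma)$ as $-\oint\rho_\Pi(\alpha)\cdot\Pi(\beta,\gamma)+\oint\Pi([\alpha,\beta]_\Pi,\gamma)$, so that it suffices to show that these two cyclic sums coincide.

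To this end I would invoke the two defining properties of the contravariant Levi-Civita connection $D$ of the pseudo-Riemannian skew algebroid $(A^\star,\rho_\Pi,[.,.]_\Pi,g^\star)$, together with the compatibility hypothesis. First, $D$ is torsion-free for the Koszul bracket, so $[\alpha,\beta]_\Pi=D_\alpha\beta-D_\beta\alpha$ and hence $\oint\Pi([\alpha,\beta]_\Pi,\gamma)=\oint\Pi(D_\alpha\beta,\gamma)-\oint\Pi(D_\beta\alpha,\gamma)$. Second, compatibility $D\Pi=0$ reads $\rho_\Pi(\alpha)\cdot\Pi(\beta,\gamma)=\Pi(D_\alpha\beta,\gamma)+\Pi(\beta,D_\alpha\gamma)$; summing this cyclically over $(\alpha,\beta,\gamma)$ and using the skew-symmetry of $\Pi$ together with the elementary reindexing identity $\oint\Pi(\beta,D_\alpha\gamma)=-\oint\Pi(D_\beta\alpha,\gamma)$ (the two cyclic sums involve the same three scalars $\Pi(D_\alpha\gamma,\beta)$, $\Pi(D_\beta\alpha,\gamma)$, $\Pi(D_\gamma\beta,\alpha)$, up to an overall sign), one obtains $\oint\rho_\Pi(\alpha)\cdot\Pi(\beta,\gamma)=\oint\Pi(D_\alpha\beta,\gamma)-\oint\Pi(D_\beta\alpha,\gamma)$. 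Comparing the two displays gives $\oint\Pi([\alpha,\beta]_\Pi,\gamma)=\oint\rho_\Pi(\alpha)\cdot\Pi(\beta,\gamma)$, and substituting this into (\ref{CrochetPiPi}) yields $[\Pi,\Pi]=0$, whence $(A,\rho,[.,.],\Pi)$ is a Poisson skew algebroid.

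An alternative would be to start from Theorem~\ref{TorsionPi}, which identifies $\frac{1}{2}[\Pi,\Pi](\alpha,\beta,\gamma)$ with $\gamma(\sharp_\Pi([\alpha,\beta]_\Pi)-[\sharp_\Pi(\alpha),\sharp_\Pi(\beta)])$; this, however, would require controlling the $A$-bracket $[\sharp_\Pi(\alpha),\sharp_\Pi(\beta)]$, which is less directly tied to the compatibility condition than the pointwise values $\Pi(\cdot,\cdot)$ are, so I expect the first route to be shorter. The only slightly delicate point in the argument is the bookkeeping of cyclic sums and signs — in particular the reindexing identity noted above — but this is routine once the three terms are written out explicitly, so I do not anticipate a genuine obstacle; the crux is simply recognizing that torsion-freeness of $D$ together with $D\Pi=0$ turn the second cyclic sum in (\ref{CrochetPiPi}) into the first.
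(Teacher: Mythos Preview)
Your proof is correct and follows essentially the same approach as the paper: both arguments take the cyclic sum in (\ref{CrochetPiPi}), use that $D$ is torsion-free for $[.,.]_\Pi$, and invoke the compatibility hypothesis to conclude $[\Pi,\Pi]=0$. The only cosmetic difference is that the paper packages the computation through the endomorphism $J^\star$, first deriving the general identity $[\Pi,\Pi](\alpha,\beta,\gamma)=\oint g^\star(\alpha,D_\gamma J^\star(\beta))$ from (\ref{DJDPi}) and then applying $DJ^\star=0$, whereas you work directly with $D\Pi=0$; since the paper itself notes $g^\star(\alpha,D_\gamma J^\star(\beta))=-D_\gamma\Pi(\alpha,\beta)$, this is the same calculation.
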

\begin{proof}
Let $\alpha ,\beta ,\gamma \in \Gamma \left(A^{\star }\right)$.
Taking the cyclic sum in $\alpha,\beta,\gamma$ on the two sides of
the formula (\ref{DJDPi}), taking in account that $D$ is symmetric
and using (\ref{CrochetPiPi}) we get the identity :
\begin{equation*}
\left[ \Pi ,\Pi \right] \left( \alpha ,\beta ,\gamma \right)
=g^{\star }\left( \alpha ,D_{\gamma }J^{\star }\left( \beta \right)
\right) +g^{\star }\left( \beta ,D_{\alpha }J^{\star }\left( \gamma
\right) \right) +g^{\star }\left( \gamma ,D_{\beta }J^{\star }\left(
\alpha \right) \right)
\end{equation*}
from which the result follows.
\end{proof}

A (pseudo-)Riemannian Poisson skew (resp. almost Lie, Lie) algebroid
$\left(A,\rho,[.,.],\Pi,g\right) $  is a skew (resp. almost Lie,
Lie) algebroid $\left( A,\rho,[.,.]\right) $ equipped with a
 (pseudo-)Riemannian metric $g$ and an $A$-bivector field $\Pi$ that
 are compatible. This definition is justified by the proposition above (i.e. $\Pi$
 is necessarily Poisson).

\begin{ex}
\begin{enumerate}
\item When $A=TM$ , we get the notion of (pseudo-)Riemannian Poisson
manifold defined by M. Boucetta, see \cite{boucetta1,boucetta2}.
\item Let $(A,\rho,[.,.])$ be a skew algebroid, $\Omega$ be
a nondegenerate $A$-form and $\Pi$ be the $A$-bivector field
associated with in Remark \ref{symp-Lie-algebroid}. Assume that $g$
is a pseudo-Riemannian metric on $A$ associated with $\Omega$, i.e.,
$$
g^\star(\alpha,\beta)=g(\sharp_\Omega(\alpha),
\sharp_\Omega(\beta)),
$$
for any $\alpha,\beta\in\Gamma(A^\star)$. If $D$ is the
contravariant Levi-Civita $A$-connection associated with the pair
$(\Pi,g)$, then
$$
\sharp_\Omega(D_\alpha
\beta)=\nabla_{\sharp_\Omega(\alpha)}\left(\sharp_\Omega(\beta)\right),
$$
where $\nabla$ is the (covariant) Levi-Civita $A$-connection associated
with $g$. Hence,
$$
D\Pi(\alpha,\beta,\gamma)=\nabla\Omega(\sharp_\Omega(\alpha),\sharp_\Omega(\beta),\sharp_\Omega(\gamma))
$$
for any $\alpha,\beta,\gamma\in \Gamma(A^\star)$. Therefore,
$(A,\rho,[.,.],\Pi,g)$ is Riemannian Poisson if and only if
$(A,\rho,[.,.],\Omega,g)$ is K\"ahler.

\item Let $\left( \mathfrak{g},[.,.]\right)$ be a real Lie algebra of
finite dimension seen as a Lie algebroid over a point ($\rho=0$). A
$\mathfrak{g}$-bivector field $\Pi$ is just a skew symmetric
bilinear form on the dual space $\mathfrak{g}^\star$ of
$\mathfrak{g}$. Hence, the associated Koszul bracket $[.,.]_\Pi$ is
defined by
$$
[\alpha,\beta]_\Pi(a)=\alpha\left([\sharp_\Pi(\beta),a]\right)-\beta\left([\sharp_\Pi(\alpha),a]\right),
$$
for any $\alpha,\beta\in \mathfrak{g}^\star$ and any $a\in
\mathfrak{g}$. Let $\langle .,. \rangle$ be a (pseudo-)Riemannian
metric on the Lie algebroid $\left( \mathfrak{g},[.,.]\right)$, i.e.
a scalar product on the vector space $\mathfrak{g}$, and let
$\langle .,. \rangle^\star$ be its cometric. The contravariant
Levi-Civita $\mathfrak{g}$-connection associated with the pair
$\left(\Pi, \langle .,. \rangle\right)$ is defined by
$$
2\left\langle D_{\alpha }\beta ,\gamma
\right\rangle^\star=\left\langle [\alpha,\beta] _{\Pi },\gamma
\right\rangle ^{\star }-\left\langle [\beta,\gamma]_\Pi,\alpha
\right\rangle^\star-\left\langle [\alpha,\gamma]_\Pi,\beta
\right\rangle^\star.
$$
The pair $\left(\Pi, \langle .,. \rangle\right)$ is compatible if
and only if
$$
\Pi\left(D_\alpha\beta,\gamma\right)+\Pi\left(\beta,D_\alpha\gamma\right)=0,
$$
for every $\alpha,\beta,\gamma\in \mathfrak{g}^\star$. So, if this
last identity is satisfied, we call the quadruple
$\left(\mathfrak{g},[.,.],\Pi,\langle .,. \rangle \right)$ a
(pseudo-)Riemannian Poisson Lie algebra.
\end{enumerate}
\end{ex}

\section{Jacobi structure on a Lie algebroid}

\subsection{Jacobi structure on an almost Lie
algebroid}

Let $(A,\rho,[.,.])$ be a skew algebroid on $M$. Let $\Pi$ be an
$A$-bivector field and $\xi$ be an $A$-vector field. We say that the
pair $(\Pi,\xi)$ is a Jacobi structure on $\left(A,\rho ,\left[
.,.\right]\right) $ if the following identities are satisfied
$$
\left[ \Pi ,\Pi \right] =2\xi \wedge \Pi \qquad \textrm{ and }
\qquad \left[ \xi ,\Pi \right] =\mathcal{L}^\rho_\xi\Pi=0.
$$
A Jacobi structure $(\Pi,\xi)$ for which $\xi=0$ is just a Poisson
structure.

If $(A,\rho,[.,.])$ is an almost Lie algebroid the pushforward of
multivectors $\rho_\star :\Gamma(\wedge^\bullet A)\rightarrow
\Gamma(\wedge^\bullet TM)$ is compatible with the
Schouten-Nijenhuis bracket and in particular we have
$$
\rho_\star\left[\Pi,\Pi\right]=\left[\rho_\star\Pi,\rho_\star\Pi\right]  \qquad \textrm{ and }
\qquad \rho_\star\left[\xi ,\Pi \right]=\left[ \rho_\star\xi ,\rho_\star \Pi \right].
$$
Therefore, a Jacobi structure $(\Pi,\xi)$ on the almost Lie
algebroid $(A,\rho,[.,.])$ induces a Jacobi structure
$(\rho_\star\Pi,\rho_\star\xi)$ on the manifold $M$. The
corresponding Jacobi bracket on the smooth functions $C^\infty(M)$
is given by
$$
\left\{\varphi,\psi\right\}=\rho_\star\Pi\left(d\varphi,d\psi\right)+\varphi\mathcal{L}_{\rho_\star\xi}\psi-\psi\mathcal{L}_{\rho_\star\xi}\varphi,
$$
i.e.
$$
\left\{\varphi,\psi\right\}=\Pi\left(d_\rho\varphi,d_\rho\psi\right)+\varphi\mathcal{L}^\rho_\xi\psi-\psi\mathcal{L}^\rho_\xi\varphi.
$$
When $\xi=0$, the $A$-Poisson tensor $\Pi$ induces a Poisson tensor
$\rho_\star\Pi$ on $M$, the corresponding Poisson bracket on the
smooth functions $C^\infty(M)$ being given by
$\left\{\varphi,\psi\right\}=\Pi(d_\rho\varphi,d_\rho\psi)$.

\subsection{Skew algebroids associated with the pair $(\Pi,\xi)$}

Let $\left(A,\rho,\left[ .,.\right]\right)$ be a skew algebroid. Let
$\Pi$ be an $A$-bivector field and $\xi$ be an $A$-vector field.
With the pair $(\Pi,\xi)$ we associate the vector bundle morphisms
$\sharp_{\Pi,\xi}:A^\star \rightarrow A$ and $\rho_{\Pi,\xi}:A^\star
\rightarrow TM$ defined by
\begin{equation}\label{rhoPixi}
\sharp_{\Pi,\xi}(\alpha)=\sharp_{\Pi}(\alpha) +\alpha(\xi)\xi \qquad \textrm{ and } \qquad \rho_{\Pi,\xi}=\rho\circ \sharp_{\Pi,\xi}.
\end{equation}
For any section $\lambda$ of the vector bundle $A^{\star }$,
consider the bracket $\left[.,.\right]_{\Pi,\xi}^\lambda$ on
$\Gamma(A^\star)$ defined by
\begin{equation}\label{bracketPixi}
\left[ \alpha ,\beta \right] _{\Pi ,\xi }^{\lambda }
=\left[\alpha,\beta\right]_{\Pi}+ \alpha(\xi) \left( \mathcal{L}^\rho_\xi\beta -\beta \right) -\beta(\xi) \left( \mathcal{L}^\rho_\xi\alpha -\alpha \right)
-\Pi(\alpha,\beta)\lambda.
\end{equation}
The pair $\left(\rho_{\Pi ,\xi },\left[ .,.\right] _{\Pi ,\xi
}^{\lambda }\right)$ induces a skew algebroid structure on the dual
bundle of $A$. We call $\left( A^\star,\rho_{\Pi ,\xi },\left[
.,.\right] _{\Pi ,\xi }^{\lambda }\right)$ the skew algebroid
associated with the triple $\left( \Pi ,\xi ,\lambda \right) $. In
the particular case where $\xi =0$ and $\lambda =0$, we get the skew
algebroid $\left(A^{\star },\rho_{\Pi },\left[ .,.\right] _{\Pi
}\right) $ associated in \S\ref{PoissonLieAlgebroids} with the
$A$-bivector field $\Pi$.

\begin{thm}\label{alg-alt-jacobiO}
Assume that $\left( \Pi ,\xi \right)$ is a Jacobi structure on the
skew algebroid $\left( A,\rho,[.,.] \right)$ and let $\lambda \in
\Gamma(A^\star)$. We have
\begin{equation*}
\sharp _{\Pi ,\xi }\left( \left[ \alpha ,\beta \right]_{\Pi
,\xi}^{\lambda }\right) -\left[ \sharp_{\Pi,\xi}(\alpha)
,\sharp_{\Pi ,\xi}(\beta) \right]=\Pi(\alpha ,\beta)\left(\xi
-\sharp _{\Pi ,\xi }(\lambda) \right),
\end{equation*}
for any $\alpha ,\beta \in \Gamma(A^\star)$.
\end{thm}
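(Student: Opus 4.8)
The plan is to expand the left-hand side directly using the definitions \eqref{rhoPixi} and \eqref{bracketPixi}, and to exploit the fact that Theorem~\ref{TorsionPi} already handles the ``$\Pi$-only'' part of the computation. First I would write $\sharp_{\Pi,\xi}\!\left([\alpha,\beta]_{\Pi,\xi}^\lambda\right)$ by applying $\sharp_{\Pi,\xi}$ term by term to \eqref{bracketPixi}: the term $[\alpha,\beta]_\Pi$ gives $\sharp_\Pi([\alpha,\beta]_\Pi) + [\alpha,\beta]_\Pi(\xi)\,\xi$, the term $-\Pi(\alpha,\beta)\lambda$ gives $-\Pi(\alpha,\beta)\,\sharp_{\Pi,\xi}(\lambda)$, and the middle terms $\alpha(\xi)(\mathcal L^\rho_\xi\beta-\beta)$ and $-\beta(\xi)(\mathcal L^\rho_\xi\alpha-\alpha)$ get mapped through $\sharp_{\Pi,\xi}$. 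In parallel I would expand $[\sharp_{\Pi,\xi}(\alpha),\sharp_{\Pi,\xi}(\beta)]$ by bilinearity of the bracket and the Leibniz rule, writing $\sharp_{\Pi,\xi}(\alpha)=\sharp_\Pi(\alpha)+\alpha(\xi)\xi$ and similarly for $\beta$, obtaining four bracket terms plus the derivative terms coming from Leibniz (since $\alpha(\xi)$, $\beta(\xi)$ are functions).

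Next I would match up the pieces. The difference $\sharp_\Pi([\alpha,\beta]_\Pi) - [\sharp_\Pi(\alpha),\sharp_\Pi(\beta)]$ is controlled by Theorem~\ref{TorsionPi}: pairing with any $\gamma$ gives $\tfrac12[\Pi,\Pi](\alpha,\beta,\gamma)$, and since $(\Pi,\xi)$ is Jacobi we have $[\Pi,\Pi]=2\xi\wedge\Pi$, so $\sharp_\Pi([\alpha,\beta]_\Pi)-[\sharp_\Pi(\alpha),\sharp_\Pi(\beta)] = (\xi\wedge\Pi)(\alpha,\beta,\cdot)^\sharp$, which after unwinding $(\xi\wedge\Pi)(\alpha,\beta,\gamma)=\alpha(\xi)\Pi(\beta,\gamma)+\beta(\xi)\Pi(\gamma,\alpha)+\gamma(\xi)\Pi(\alpha,\beta)$ and re-raising indices should produce exactly a combination of $\Pi(\alpha,\beta)\xi$ together with terms that cancel against the $\sharp_\Pi$ of the middle terms in \eqref{bracketPixi} and the mixed brackets $[\sharp_\Pi(\alpha),\alpha(\xi)... ]$-type terms. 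The key auxiliary identities I expect to need are $\mathcal L^\rho_\xi\Pi=0$ (equivalently $[\xi,\Pi]=0$), which lets me commute $\sharp_\Pi$ past $\mathcal L^\rho_\xi$ in the sense that $\sharp_\Pi(\mathcal L^\rho_\xi\beta)=[\xi,\sharp_\Pi(\beta)]$ up to the $\mathcal L^\rho_\xi\Pi$ correction term (which vanishes), and the elementary fact $\rho(\xi)\cdot(\beta(\xi)) = (\mathcal L^\rho_\xi\beta)(\xi)$ since $\mathcal L^\rho_\xi\xi=[\xi,\xi]=0$.

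Then I would carefully collect: the $\sharp_\Pi$-cross-terms from $[\sharp_{\Pi,\xi}(\alpha),\sharp_{\Pi,\xi}(\beta)]$, namely $\beta(\xi)[\sharp_\Pi(\alpha),\xi]$, $\alpha(\xi)[\xi,\sharp_\Pi(\beta)]$, the $\xi\wedge\xi$ bracket term $\alpha(\xi)\beta(\xi)[\xi,\xi]=0$, and the Leibniz-derivative terms of the form $(\rho(\sharp_\Pi(\alpha))\cdot\beta(\xi))\xi$ and $(\alpha(\xi)\rho(\xi)\cdot\beta(\xi))\xi$ etc. These should pair off against $\sharp_\Pi$ applied to $\alpha(\xi)(\mathcal L^\rho_\xi\beta-\beta)-\beta(\xi)(\mathcal L^\rho_\xi\alpha-\alpha)$ plus the $\alpha(\xi)\xi$-valued part of $\sharp_{\Pi,\xi}([\alpha,\beta]_\Pi)$ and the corresponding contributions to $[\alpha,\beta]_{\Pi,\xi}^\lambda(\xi)$. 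After all cancellations the only surviving terms should be $\Pi(\alpha,\beta)\xi$ (from the $\gamma(\xi)\Pi(\alpha,\beta)$ part of $\xi\wedge\Pi$, re-expressed via the isomorphism) and $-\Pi(\alpha,\beta)\sharp_{\Pi,\xi}(\lambda)$, giving the claimed formula.

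The main obstacle is purely bookkeeping: there are many terms of similar shape (functions times $\xi$, brackets of $\sharp_\Pi(\cdot)$ with $\xi$, Lie-derivative terms), and one must be scrupulous about signs and about which terms are genuinely zero by the Jacobi conditions $\mathcal L^\rho_\xi\Pi=0$ and $[\xi,\xi]=0$ versus which merely cancel in pairs. A clean way to organize the proof, and the one I would adopt, is to test both sides against an arbitrary $\gamma\in\Gamma(A^\star)$, reducing everything to scalar identities where Theorem~\ref{TorsionPi} and formula \eqref{CrochetPiPi} apply verbatim; this converts the vector-bundle-morphism identity into the single scalar identity $\gamma(\sharp_{\Pi,\xi}([\alpha,\beta]_{\Pi,\xi}^\lambda) - [\sharp_{\Pi,\xi}(\alpha),\sharp_{\Pi,\xi}(\beta)]) = \Pi(\alpha,\beta)\gamma(\xi-\sharp_{\Pi,\xi}(\lambda))$, which is then a finite computation using only \eqref{bracketPi}, \eqref{torsion-alg-alt-asso-Pi}, $[\Pi,\Pi]=2\xi\wedge\Pi$, and $\mathcal L^\rho_\xi\Pi=0$.
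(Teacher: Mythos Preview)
Your proposal is correct and follows essentially the same approach as the paper: the paper's proof consists of the single sentence ``The proof is the same as in the classical case of the tangent algebroid $A=TM$ \ldots\ Use Theorem~\ref{TorsionPi},'' and the computation you outline---expanding both sides via \eqref{rhoPixi} and \eqref{bracketPixi}, invoking Theorem~\ref{TorsionPi} together with $[\Pi,\Pi]=2\xi\wedge\Pi$ for the pure $\Pi$-part, and using $\mathcal{L}^\rho_\xi\Pi=0$ to identify $[\xi,\sharp_\Pi(\beta)]$ with $\sharp_\Pi(\mathcal{L}^\rho_\xi\beta)$---is exactly that classical computation transported verbatim to the Lie-algebroid setting. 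Your suggestion to test against an arbitrary $\gamma$ is a sensible organizational device and matches how Theorem~\ref{TorsionPi} itself is stated and proved.
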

\begin{proof}
The proof is the same as in the classical case of the tangent
algebroid $A=TM$, \cite[Th. 2.1]{aitamrane-zeglaoui}. Use Theorem
\ref{TorsionPi}.
\end{proof}

\begin{cor}\label{alg-alt-jacobi}
Assume that $(A,\rho,[.,.])$ in the theorem above is an almost Lie
algebroid. If $\xi -\sharp _{\Pi ,\xi }(\lambda)\in \ker\rho$, then
the skew algebroid $\left( A^\star,\rho_{\Pi,\xi},\left[
.,.\right]_{\Pi ,\xi}^{\lambda}\right)$ is an almost Lie algebroid,
i.e., we have
\begin{equation*}
\rho_{\Pi,\xi}\left( \left[ \alpha ,\beta \right] _{\Pi ,\xi
}^{\lambda }\right) =\left[ \rho_{\Pi ,\xi }(\alpha)
,\rho_{\Pi,\xi}(\beta) \right],
\end{equation*}%
for any  $A$-forms $\alpha $ and $\beta$. The converse is also true
if $\Pi \neq 0$.
\end{cor}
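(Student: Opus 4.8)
The plan is to exploit Theorem \ref{alg-alt-jacobiO} directly, together with the defining property of an almost Lie algebroid. First I would apply the anchor $\rho$ to the identity of Theorem \ref{alg-alt-jacobiO}. Since $\rho$ is $C^\infty(M)$-linear on sections, this yields
\begin{equation*}
\rho\!\left(\sharp_{\Pi,\xi}\!\left([\alpha,\beta]_{\Pi,\xi}^{\lambda}\right)\right)
-\rho\!\left(\left[\sharp_{\Pi,\xi}(\alpha),\sharp_{\Pi,\xi}(\beta)\right]\right)
=\Pi(\alpha,\beta)\,\rho\!\left(\xi-\sharp_{\Pi,\xi}(\lambda)\right).
\end{equation*}
The left-hand side: the first term is exactly $\rho_{\Pi,\xi}\!\left([\alpha,\beta]_{\Pi,\xi}^{\lambda}\right)$ by the definition $\rho_{\Pi,\xi}=\rho\circ\sharp_{\Pi,\xi}$ in (\ref{rhoPixi}); and since $(A,\rho,[.,.])$ is an \emph{almost} Lie algebroid, $\rho\!\left([\sharp_{\Pi,\xi}(\alpha),\sharp_{\Pi,\xi}(\beta)]\right)=[\rho(\sharp_{\Pi,\xi}(\alpha)),\rho(\sharp_{\Pi,\xi}(\beta))]=[\rho_{\Pi,\xi}(\alpha),\rho_{\Pi,\xi}(\beta)]$, the usual Lie bracket of vector fields. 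So the displayed identity becomes
\begin{equation*}
\rho_{\Pi,\xi}\!\left([\alpha,\beta]_{\Pi,\xi}^{\lambda}\right)
-\left[\rho_{\Pi,\xi}(\alpha),\rho_{\Pi,\xi}(\beta)\right]
=\Pi(\alpha,\beta)\,\rho\!\left(\xi-\sharp_{\Pi,\xi}(\lambda)\right).
\end{equation*}

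Now the hypothesis $\xi-\sharp_{\Pi,\xi}(\lambda)\in\ker\rho$ forces the right-hand side to vanish identically in $\alpha,\beta$, which is precisely the statement that $(A^\star,\rho_{\Pi,\xi},[.,.]_{\Pi,\xi}^{\lambda})$ is an almost Lie algebroid. For the converse, suppose this skew algebroid is almost Lie; then the displayed identity gives $\Pi(\alpha,\beta)\,\rho\!\left(\xi-\sharp_{\Pi,\xi}(\lambda)\right)=0$ for all $\alpha,\beta\in\Gamma(A^\star)$. If $\Pi\neq 0$, then the function-valued expression $\Pi(\alpha,\beta)$ is nonzero for some pair $\alpha,\beta$ on some open set, and a standard localization/bump-function argument shows $\rho\!\left(\xi-\sharp_{\Pi,\xi}(\lambda)\right)$ must vanish on that open set; since $\Pi\neq0$ means at every point $m$ of a dense open subset there is such a pair, one concludes $\rho\!\left(\xi-\sharp_{\Pi,\xi}(\lambda)\right)=0$ on a dense open set, hence everywhere by continuity, i.e.\ $\xi-\sharp_{\Pi,\xi}(\lambda)\in\ker\rho$.

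The only delicate point is this last localization step in the converse: one must be careful that $\Pi\neq0$ (as a global section of $\wedge^2A$) genuinely produces, near any point where $\Pi$ does not vanish, sections $\alpha,\beta$ of $A^\star$ with $\Pi(\alpha,\beta)$ nonvanishing on a neighborhood, and then use that $\Pi(\alpha,\beta)$ times a fixed section being zero implies the section vanishes where $\Pi(\alpha,\beta)\neq0$. Everything else is an immediate consequence of Theorem \ref{alg-alt-jacobiO}, the $C^\infty(M)$-linearity of $\rho$, and the definition of "almost Lie algebroid"; no new computation is needed. One may also simply remark that the argument is identical to the manifold case treated in \cite[Cor. 2.2]{aitamrane-zeglaoui}.
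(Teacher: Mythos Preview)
Your proof is correct and follows exactly the paper's approach: apply $\rho$ to the identity of Theorem~\ref{alg-alt-jacobiO}, use the almost-Lie-algebroid hypothesis to rewrite $\rho([\sharp_{\Pi,\xi}(\alpha),\sharp_{\Pi,\xi}(\beta)])$ as $[\rho_{\Pi,\xi}(\alpha),\rho_{\Pi,\xi}(\beta)]$, and read off both directions from the resulting torsion identity. The paper's own proof is in fact terser than yours---it simply records the torsion identity and leaves the converse implicit---so your localization discussion for the converse is additional detail; note only that ``$\Pi\neq 0$'' as stated need not mean $\Pi$ is nonvanishing on a dense open set, so strictly speaking the converse yields $\xi-\sharp_{\Pi,\xi}(\lambda)\in\ker\rho$ only on the closure of $\{\Pi\neq 0\}$, a subtlety neither you nor the paper resolves.
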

\begin{proof}
Applying $\rho$ to the identity in the theorem above, we get the
following identity
\begin{equation*}
\rho_{\Pi ,\xi }\left([\alpha ,\beta]_{\Pi,\xi}^{\lambda}\right)
-\left[ \rho_{\Pi,\xi}(\alpha) ,\rho_{\Pi ,\xi}(\beta)
\right]=\Pi(\alpha ,\beta)\rho\left(\xi -\sharp _{\Pi ,\xi
}(\lambda) \right)
\end{equation*}
that gives the torsion of the skew algebroid $\left(
A^\star,\rho_{\Pi ,\xi },\left[ .,.\right] _{\Pi ,\xi }^{\lambda
}\right)$, i.e. the default for it to be an almost Lie algebroid.
\end{proof}

The following result gives for a Lie algebroid $(A,\rho,[.,.])$ the Jacobiator $J^\lambda_{\Pi,\xi}$, 
$$
J^\lambda_{\Pi,\xi}(\alpha,\beta,\gamma):=\oint [[\alpha,\beta]^\lambda_{\Pi,\xi},\gamma]^\lambda_{\Pi,\xi}, 
$$
of the skew algebroid $(A^\star,\rho_{\Pi,\xi},[.,.]^\lambda_{\Pi,\xi})$.

\begin{thm} 
Assume that $(A,\rho,[.,.])$ is a Lie algebroid. We have
$$
\begin{array}{ll}
J_{\Pi,\xi}^\lambda(\alpha,\beta,\gamma)= & \oint \mathcal{L}^\rho_{(\frac{1}{2}[\Pi,\Pi]-\xi\wedge \Pi)(\alpha,\beta,\cdot)}\gamma  +\left[(2\xi\wedge \Pi -[\Pi,\Pi])(\alpha,\beta,\gamma)+\oint\gamma(\xi)\mathcal{L}^\rho_\xi\Pi(\alpha,\beta)\right]\lambda \\  
& + d_\rho\left((2\xi\wedge \Pi-[\Pi,\Pi])(\alpha,\beta,\gamma)\right)-\oint\gamma(\xi)\mathcal{L}^\rho_\xi([.,.]_\Pi)(\alpha,\beta)  \\
& +\oint(\mathcal{L}^\rho_\xi\gamma-\gamma)\mathcal{L}^\rho_\xi\Pi(\alpha,\beta)+ \oint \Pi(\alpha,\beta)\left(\mathcal{L}^\rho_{\xi}\gamma
-[\lambda,\gamma]^\lambda_{\Pi,\xi}\right),
\end{array}
$$
where $\mathcal{L}^\rho_\xi([.,.]_\Pi)(\alpha,\beta)=\mathcal{L}^\rho_\xi([\alpha,\beta]_\Pi)-[\mathcal{L}^\rho_\xi\alpha,\beta]_\Pi-[\alpha,\mathcal{L}^\rho_\xi\beta]_\Pi$.
\end{thm}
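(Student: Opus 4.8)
The plan is to compute the Jacobiator $J^{\lambda}_{\Pi,\xi}(\alpha,\beta,\gamma)=\oint[[\alpha,\beta]^{\lambda}_{\Pi,\xi},\gamma]^{\lambda}_{\Pi,\xi}$ by expanding the inner bracket with the definition~(\ref{bracketPixi}), then inserting it into the outer bracket, again expanding by~(\ref{bracketPixi}), and finally organizing the resulting terms by ``type'': those that are Lie $A$-derivatives, those that are multiples of $\lambda$, those that are $d_{\rho}$ of a function, and those that are functions multiplying $A$-forms. The guiding principle is that the bracket $[.,.]^{\lambda}_{\Pi,\xi}$ differs from $[.,.]_{\Pi}$ by the ``correction'' terms $\alpha(\xi)(\mathcal{L}^{\rho}_{\xi}\beta-\beta)-\beta(\xi)(\mathcal{L}^{\rho}_{\xi}\alpha-\alpha)-\Pi(\alpha,\beta)\lambda$, so the computation naturally splits into the Poisson-type contribution $\oint[[\alpha,\beta]_{\Pi},\gamma]_{\Pi}$, which is already controlled by Theorem~\ref{JacobiatorPi} (and ultimately by $[\Pi,\Pi]$ via~(\ref{CrochetPiPi}) and Theorem~\ref{TorsionPi}), plus cross terms and purely-$\xi$ terms.

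First I would recall, from Theorem~\ref{JacobiatorPi} applied to the Lie algebroid $(A,\rho,[.,.])$ together with the bivector field $\Pi$, the expression for $J_{\Pi}(\alpha,\beta,\gamma)=\oint[[\alpha,\beta]_{\Pi},\gamma]_{\Pi}$, and rewrite the first summand there using the torsion identity~(\ref{torsion-alg-alt-asso-Pi}) of Theorem~\ref{TorsionPi}, which turns $\sharp_{\Pi}([\alpha,\beta]_{\Pi})-[\sharp_{\Pi}(\alpha),\sharp_{\Pi}(\beta)]$ into $\frac12[\Pi,\Pi](\alpha,\beta,\cdot)$; this accounts for the $\oint\mathcal{L}^{\rho}_{(\frac12[\Pi,\Pi]-\xi\wedge\Pi)(\alpha,\beta,\cdot)}\gamma$ term once the $\xi\wedge\Pi$ part is produced by the $\xi$-corrections (the $\alpha(\xi)\xi$ piece of $\sharp_{\Pi,\xi}$ versus $\sharp_{\Pi}$, together with $(\xi\wedge\Pi)(\alpha,\beta,\cdot)=\alpha(\xi)\Pi(\beta,\cdot)+\cdots$). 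Next I would track the $\lambda$-terms: every time $[.,.]^{\lambda}_{\Pi,\xi}$ is expanded, a summand $-\Pi(\cdot,\cdot)\lambda$ appears, and in the outer bracket $[\Pi(\alpha,\beta)\lambda,\gamma]^{\lambda}_{\Pi,\xi}$ must be expanded again by~(\ref{bracketPixi}), producing $\mathcal{L}^{\rho}_{\sharp_{\Pi,\xi}(\Pi(\alpha,\beta)\lambda)}\gamma$-type terms (hence the $[\lambda,\gamma]^{\lambda}_{\Pi,\xi}$ appearing in the last line) and further $\lambda$-multiples; collecting the coefficient of $\lambda$ and recognizing it, via~(\ref{CrochetPiPi}) and the Jacobi-structure-free bookkeeping, as $(2\xi\wedge\Pi-[\Pi,\Pi])(\alpha,\beta,\gamma)+\oint\gamma(\xi)\mathcal{L}^{\rho}_{\xi}\Pi(\alpha,\beta)$ is the second block of the claimed formula. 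The $d_{\rho}$-terms come from the $-d_{\rho}(\Pi(\cdot,\cdot))$ piece of the Koszul-type bracket and from the compatibility of $\mathcal{L}^{\rho}_{\xi}$ with $d_{\rho}$ (since $(A,\rho,[.,.])$ is an almost Lie algebroid), and reassemble into $d_{\rho}((2\xi\wedge\Pi-[\Pi,\Pi])(\alpha,\beta,\gamma))-\oint\gamma(\xi)\mathcal{L}^{\rho}_{\xi}([.,.]_{\Pi})(\alpha,\beta)$; the remaining function-times-form terms, produced by the $\beta(\xi)$ and $\alpha(\xi)$ prefactors interacting with $\Pi(\alpha,\beta)$, give the final line $\oint(\mathcal{L}^{\rho}_{\xi}\gamma-\gamma)\mathcal{L}^{\rho}_{\xi}\Pi(\alpha,\beta)+\oint\Pi(\alpha,\beta)(\mathcal{L}^{\rho}_{\xi}\gamma-[\lambda,\gamma]^{\lambda}_{\Pi,\xi})$.

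Throughout I would use that $(A,\rho,[.,.])$ being a Lie algebroid gives $d_{\rho}\circ d_{\rho}=0$, the graded Jacobi identity for the Schouten–Nijenhuis bracket, and the fact that $\mathcal{L}^{\rho}_{a}$ commutes with $d_{\rho}$ and acts as a derivation on contractions $\langle P,\eta\rangle$ — these are exactly the identities exploited in the proof of Theorem~\ref{JacobiatorPi}, which I would mirror for the $\xi$-corrected terms. Note that this statement does \emph{not} assume $(\Pi,\xi)$ is a Jacobi structure, so none of $[\Pi,\Pi]-2\xi\wedge\Pi$ or $\mathcal{L}^{\rho}_{\xi}\Pi$ may be set to zero; they persist as the genuine obstruction terms, which is the whole point of the formula. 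The main obstacle will be the sheer bookkeeping: correctly collecting the many cyclic sums without sign errors, in particular keeping straight the three inequivalent ``$\xi$-slots'' (the $\alpha(\xi)$ that multiplies the whole corrected bracket, the $\xi$ inside $\mathcal{L}^{\rho}_{\xi}$, and the $\alpha(\xi)\xi$ inside $\sharp_{\Pi,\xi}$), and verifying that the cross terms between the $\xi$-corrections and the $-\Pi(\cdot,\cdot)\lambda$ term combine into precisely the $\oint\gamma(\xi)\mathcal{L}^{\rho}_{\xi}\Pi(\alpha,\beta)\,\lambda$ summand rather than some uncyclized remainder. The analogous manifold-level computation in \cite{aitamrane-zeglaoui} can be followed as a template, with every $d$, $\mathcal{L}$, and $\sharp$ decorated by $\rho$ and every identity justified by the Lie-algebroid axioms recalled in Section~2.
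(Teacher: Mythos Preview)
Your plan is essentially the same as the paper's: split $J^\lambda_{\Pi,\xi}$ into $J_\Pi$ plus the correction terms coming from the $\xi$- and $\lambda$-parts of~(\ref{bracketPixi}), then rewrite $J_\Pi$ via Theorem~\ref{JacobiatorPi} and the torsion identity~(\ref{torsion-alg-alt-asso-Pi}), and finally regroup the cyclic sums using~(\ref{CrochetPiPi}). The paper carries this out in exactly that order (first the ``long but direct'' reduction to $J_\Pi+\text{corrections}$, then the reformulation of $J_\Pi$), so the only real work left for you is the bookkeeping you already flagged.
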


\begin{proof}
A long but direct calculation using (\ref{bracketPi}),(\ref{CrochetPiPi}),(\ref{rhoPixi}) and (\ref{bracketPixi}) gives 
$$
\begin{array}{ll}
J^\lambda_{\Pi,\xi}(\alpha,\beta,\gamma)= & J_\Pi(\alpha,\beta,\gamma)-\oint\gamma(\xi)\mathcal{L}^\rho_\xi([.,.]_\Pi)(\alpha,\beta)
+\oint(\mathcal{L}^\rho_\xi\gamma-\gamma)\mathcal{L}^\rho_\xi\Pi(\alpha,\beta) \\ 
 & -\oint\gamma(\xi)[\alpha,\beta]_\Pi +\left((2\xi\wedge\Pi-[\Pi,\Pi])(\alpha,\beta,\gamma)+\oint\gamma(\xi)\mathcal{L}^\rho_\xi\Pi(\alpha,\beta)\right)\lambda \\ 
  & +\oint\Pi(\alpha,\beta)[\gamma,\lambda]^\lambda_{\Pi,\xi}. 
\end{array}
$$
By Theorem \ref{JacobiatorPi}, we have 
$$
J_\Pi(\alpha,\beta,\gamma)=\oint \mathcal{L}^\rho_{\sharp_\Pi([\alpha,\beta]_\Pi)-[\sharp_\Pi(\alpha),\sharp_\Pi(\beta)]}\gamma-\oint d_\rho\left(\Pi(\mathcal{L}^\rho_{\sharp_\Pi(\alpha)}\beta,\gamma) +\Pi(\beta,\mathcal{L}^\rho_{\sharp_\Pi(\alpha)}\gamma)\right). 
$$
On one hand, since by (\ref{torsion-alg-alt-asso-Pi}) we have  
$$
\begin{array}{ll}
\sharp_\Pi([\alpha,\beta]_\Pi)-[\sharp_\Pi(\alpha),\sharp_\Pi(\beta)]= & \left(\frac{1}{2}[\Pi,\Pi]-\xi\wedge\Pi\right)(\alpha,\beta,.)+\alpha(\xi)\sharp_\Pi(\beta)-\beta(\xi)\sharp_\Pi(\alpha) \\ 
 & +\Pi(\alpha,\beta)\xi, 
\end{array} 
$$
it comes that 
$$
\begin{array}{ll}
\mathcal{L}^\rho_{\sharp_\Pi([\alpha,\beta]_\Pi)-[\sharp_\Pi(\alpha),\sharp_\Pi(\beta)]}\gamma = & \mathcal{L}^\rho_{\left(\frac{1}{2}[\Pi,\Pi]-\xi\wedge\Pi\right)(\alpha,\beta,.)}\gamma + \alpha(\xi)\mathcal{L}^\rho_{\sharp_\Pi(\beta)}\gamma -\beta(\xi)\mathcal{L}^\rho_{\sharp_\Pi(\alpha)}\gamma \\ 
 & +\gamma(\xi)d_\rho(\Pi(\alpha,\beta))+\Pi(\beta,\gamma)d_\rho(\alpha(\xi))+\Pi(\gamma,\alpha)d_\rho(\beta(\xi)) \\ 
 & +\Pi(\alpha,\beta) \mathcal{L}^\rho_\xi\gamma. 
\end{array} 
$$
Taking the cyclic sum in $\alpha,\beta,\gamma$ on the two sides we get 
$$
\begin{array}{ll}
\oint\mathcal{L}^\rho_{\sharp_\Pi([\alpha,\beta]_\Pi)-[\sharp_\Pi(\alpha),\sharp_\Pi(\beta)]}\gamma = & 
\oint \mathcal{L}^\rho_{\left(\frac{1}{2}[\Pi,\Pi]-\xi\wedge\Pi\right)(\alpha,\beta,.)}\gamma +\oint \gamma(\xi)[\alpha,\beta]_\Pi \\ 
 & +2d_\rho\left((\xi\wedge\Pi)(\alpha,\beta,\gamma)\right) + \oint \Pi(\alpha,\beta)\mathcal{L}^\rho_\xi\gamma. 
\end{array}
$$
On the other hand, we have 
$$
\Pi(\mathcal{L}^\rho_{\sharp_\Pi(\alpha)}\beta,\gamma)=-\mathcal{L}^\rho_{\sharp_\Pi(\alpha)}\beta(\sharp_\Pi(\gamma))
=\rho_\Pi(\alpha)\cdot \Pi(\beta,\gamma)-\beta([\sharp_\Pi(\gamma),\sharp_\Pi(\alpha)]), 
$$
and then, by (\ref{torsion-alg-alt-asso-Pi}), 
$$
\Pi(\mathcal{L}^\rho_{\sharp_\Pi(\alpha)}\beta,\gamma)=\rho_\Pi(\alpha)\cdot \Pi(\beta,\gamma)
-\Pi([\gamma,\alpha]_\Pi,\beta)+\frac{1}{2}[\Pi,\Pi](\gamma,\alpha,\beta). 
$$
Hence,
$$
\begin{array}{ll}
\Pi(\mathcal{L}^\rho_{\sharp_\Pi(\alpha)}\beta,\gamma)+\Pi(\beta,\mathcal{L}^\rho_{\sharp_\Pi(\alpha)}\gamma)= & 2\rho_\Pi(\alpha)\cdot \Pi(\beta,\gamma) -\Pi([\gamma,\alpha]_\Pi,\beta)-\Pi([\alpha,\beta]_\Pi,\gamma)\\ 
& +[\Pi,\Pi](\alpha,\beta,\gamma),
\end{array}
$$
and taking the cyclic sum on the two sides we get 
$$
\begin{array}{ll}
\oint (\Pi(\mathcal{L}^\rho_{\sharp_\Pi(\alpha)}\beta,\gamma) +\Pi(\beta,\mathcal{L}^\rho_{\sharp_\Pi(\alpha)}\gamma))= & 2\oint \left(\rho_\Pi(\alpha)\cdot \Pi(\beta,\gamma)-\Pi([\alpha,\beta]_\Pi,\gamma)\right) \\ 
 & +3[\Pi,\Pi](\alpha,\beta,\gamma),
\end{array}
$$
and consequently, by (\ref{CrochetPiPi}),
$$
\oint (\Pi(\mathcal{L}^\rho_{\sharp_\Pi(\alpha)}\beta,\gamma) +\Pi(\beta,\mathcal{L}^\rho_{\sharp_\Pi(\alpha)}\gamma))=[\Pi,\Pi](\alpha,\beta,\gamma).
$$
Substituting in the expression of $J_\Pi$ above we get  
$$
\begin{array}{ll}
J_\Pi(\alpha,\beta,\gamma)= & \oint \mathcal{L}^\rho_{\left(\frac{1}{2}[\Pi,\Pi]-\xi\wedge\Pi\right)(\alpha,\beta,.)}\gamma+\oint\gamma(\xi)[\alpha,\beta]_\Pi + \oint \Pi(\alpha,\beta)\mathcal{L}^\rho_\xi\gamma \\ 
 &  + d_\rho\left((2\xi\wedge\Pi-[\Pi,\Pi])(\alpha,\beta,\gamma)\right). 
\end{array} 
$$
In remains only to substitute this in the the expression of $J^\lambda_{\Pi,\xi}$ above to get the result.   
\end{proof}

\begin{cor}
Assume that $(\Pi,\xi)$ is a Jacobi structure on a Lie algebroid $(A,\rho,[.,.])$. If $\lambda$ satisfies the property :
$$
\mathcal{L}^\rho_\xi\alpha=[\lambda,\alpha]^\lambda_{\Pi,\xi}\quad \textrm{ for any $A$-1-form $\alpha$} ,
$$
then $(A^\star,\rho_{\Pi,\xi},[.,.]^\lambda_{\Pi,\xi})$ is a Lie algebroid.  
\end{cor}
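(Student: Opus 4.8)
The plan is to prove directly that the Jacobiator $J^\lambda_{\Pi,\xi}$ vanishes identically. Since $\left(A^\star,\rho_{\Pi,\xi},[.,.]^\lambda_{\Pi,\xi}\right)$ is already known to be a skew algebroid, showing that its bracket satisfies the Jacobi identity is exactly the statement that it is a Lie algebroid. So I would start from the formula for $J^\lambda_{\Pi,\xi}(\alpha,\beta,\gamma)$ given in the theorem just above and substitute the two defining identities of the Jacobi structure, $[\Pi,\Pi]=2\xi\wedge\Pi$ and $\mathcal{L}^\rho_\xi\Pi=0$. The first identity makes both $\frac{1}{2}[\Pi,\Pi]-\xi\wedge\Pi$ and $2\xi\wedge\Pi-[\Pi,\Pi]$ vanish, so the term $\oint\mathcal{L}^\rho_{(\frac{1}{2}[\Pi,\Pi]-\xi\wedge\Pi)(\alpha,\beta,\cdot)}\gamma$, the $d_\rho$-term, and the first summand inside the coefficient of $\lambda$ all drop out. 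The second identity makes $(\mathcal{L}^\rho_\xi\Pi)(\alpha,\beta)=0$, which kills the surviving summand in the coefficient of $\lambda$ and also the term $\oint(\mathcal{L}^\rho_\xi\gamma-\gamma)\,\mathcal{L}^\rho_\xi\Pi(\alpha,\beta)$. What remains at this stage is
$$
J^\lambda_{\Pi,\xi}(\alpha,\beta,\gamma)=-\oint\gamma(\xi)\,\mathcal{L}^\rho_\xi([.,.]_\Pi)(\alpha,\beta)+\oint\Pi(\alpha,\beta)\bigl(\mathcal{L}^\rho_\xi\gamma-[\lambda,\gamma]^\lambda_{\Pi,\xi}\bigr).
$$

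The key step, and the one I expect to be the only real work, is to show that the first sum above vanishes, i.e. that $\mathcal{L}^\rho_\xi$ is a derivation of the Koszul bracket $[.,.]_\Pi$ whenever $\mathcal{L}^\rho_\xi\Pi=0$. For this I would first note that $\mathcal{L}^\rho_\xi\Pi=0$ is equivalent to $\rho(\xi)\cdot\Pi(\alpha,\beta)=\Pi(\mathcal{L}^\rho_\xi\alpha,\beta)+\Pi(\alpha,\mathcal{L}^\rho_\xi\beta)$ for all $\alpha,\beta\in\Gamma(A^\star)$, and that this in turn forces $\mathcal{L}^\rho_\xi\circ\sharp_\Pi=\sharp_\Pi\circ\mathcal{L}^\rho_\xi$ on $\Gamma(A^\star)$ (pair $\mathcal{L}^\rho_\xi(\sharp_\Pi(\alpha))$ with an arbitrary $\beta$, expand using $\mathcal{L}^\rho_\xi b=[\xi,b]$ and the duality rule for $\mathcal{L}^\rho_\xi$, and compare with $\sharp_\Pi(\mathcal{L}^\rho_\xi\alpha)$). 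Then, using that on the Lie algebroid $(A,\rho,[.,.])$ one has $\mathcal{L}^\rho_\xi\mathcal{L}^\rho_a-\mathcal{L}^\rho_a\mathcal{L}^\rho_\xi=\mathcal{L}^\rho_{[\xi,a]}$ and $\mathcal{L}^\rho_\xi\circ d_\rho=d_\rho\circ\mathcal{L}^\rho_\xi$ (the latter being immediate from $d_\rho\circ d_\rho=0$ together with the Cartan formula \eqref{cartan}), I would apply $\mathcal{L}^\rho_\xi$ to the definition \eqref{bracketPi} of $[\alpha,\beta]_\Pi$ term by term. These facts turn $\mathcal{L}^\rho_\xi\bigl(\mathcal{L}^\rho_{\sharp_\Pi(\alpha)}\beta\bigr)$ into $\mathcal{L}^\rho_{\sharp_\Pi(\mathcal{L}^\rho_\xi\alpha)}\beta+\mathcal{L}^\rho_{\sharp_\Pi(\alpha)}\mathcal{L}^\rho_\xi\beta$, the symmetric term analogously, and $\mathcal{L}^\rho_\xi d_\rho\bigl(\Pi(\alpha,\beta)\bigr)$ into $d_\rho\bigl(\Pi(\mathcal{L}^\rho_\xi\alpha,\beta)+\Pi(\alpha,\mathcal{L}^\rho_\xi\beta)\bigr)$; reassembling exactly produces $[\mathcal{L}^\rho_\xi\alpha,\beta]_\Pi+[\alpha,\mathcal{L}^\rho_\xi\beta]_\Pi$. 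Hence $\mathcal{L}^\rho_\xi([.,.]_\Pi)(\alpha,\beta)=0$, and the first sum in the displayed formula disappears.

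It then remains only that
$$
J^\lambda_{\Pi,\xi}(\alpha,\beta,\gamma)=\oint\Pi(\alpha,\beta)\bigl(\mathcal{L}^\rho_\xi\gamma-[\lambda,\gamma]^\lambda_{\Pi,\xi}\bigr),
$$
and the hypothesis on $\lambda$ says precisely that the factor $\mathcal{L}^\rho_\xi\gamma-[\lambda,\gamma]^\lambda_{\Pi,\xi}$ vanishes for every $A$-$1$-form $\gamma$. Therefore $J^\lambda_{\Pi,\xi}=0$, so the bracket $[.,.]^\lambda_{\Pi,\xi}$ satisfies the Jacobi identity and $\left(A^\star,\rho_{\Pi,\xi},[.,.]^\lambda_{\Pi,\xi}\right)$ is a Lie algebroid. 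Apart from the derivation lemma for $[.,.]_\Pi$, every step is a direct substitution into the formula of the preceding theorem; the main obstacle is keeping that lemma's bookkeeping clean, but no genuinely new idea is needed.
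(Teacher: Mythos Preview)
Your proposal is correct and follows essentially the same route as the paper: both reduce the Jacobiator formula under the Jacobi hypotheses, use the assumption on $\lambda$ to kill the $\oint\Pi(\alpha,\beta)(\mathcal{L}^\rho_\xi\gamma-[\lambda,\gamma]^\lambda_{\Pi,\xi})$ term, and then verify that $\mathcal{L}^\rho_\xi([.,.]_\Pi)=0$ via the identity $[\xi,\sharp_\Pi(\alpha)]=\sharp_\Pi(\mathcal{L}^\rho_\xi\alpha)$ (which is exactly your commutation $\mathcal{L}^\rho_\xi\circ\sharp_\Pi=\sharp_\Pi\circ\mathcal{L}^\rho_\xi$) together with $\mathcal{L}^\rho_\xi\circ d_\rho=d_\rho\circ\mathcal{L}^\rho_\xi$. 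The paper packages the last step as the closed formula $\mathcal{L}^\rho_\xi([.,.]_\Pi)(\alpha,\beta)=\mathcal{L}^\rho_{[\xi,\sharp_\Pi(\alpha)]-\sharp_\Pi(\mathcal{L}^\rho_\xi\alpha)}\beta-\mathcal{L}^\rho_{[\xi,\sharp_\Pi(\beta)]-\sharp_\Pi(\mathcal{L}^\rho_\xi\beta)}\alpha-d_\rho(\mathcal{L}^\rho_\xi\Pi(\alpha,\beta))$, but this is precisely what your term-by-term expansion yields.
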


\begin{proof}
We need only to prove that $J^\lambda_{\Pi,\xi}=0$. From the theorem above we see that   
$$
J_{\Pi,\xi}^\lambda(\alpha,\beta,\gamma)=-\oint\gamma(\xi)\mathcal{L}^\rho_\xi([.,.]_\Pi)(\alpha,\beta).
$$
A direct calculation gives 
$$
\mathcal{L}^\rho_\xi([.,.]_\Pi)(\alpha,\beta)=\mathcal{L}^\rho_{[\xi,\sharp_\Pi(\alpha)]-\sharp_\Pi(\mathcal{L}^\rho_\xi\alpha)}\beta -\mathcal{L}^\rho_{[\xi,\sharp_\Pi(\beta)]-\sharp_\Pi(\mathcal{L}^\rho_\xi\beta) }\alpha  -d_\rho(\mathcal{L}^\rho_\xi \Pi(\alpha,\beta)). 
$$
Since $\mathcal{L}^\rho_\xi \Pi=0$, then $\mathcal{L}^\rho_\xi([.,.]_\Pi)(\alpha,\beta)=0$.  
\end{proof}

\subsection{Contact Lie algebroids}
In this paragraph we use the global language of tensors to
generalize to the Lie algebroid framework the results in
\cite{lichnerowicz} about the contravariant characterization of
almost cosymplectic manifolds and contact manifolds.

\subsubsection{Almost cosymplectic structures on a skew algebroid}
Let $(A,\rho,[.,.])$ be a skew algebroid with the underlying vector
bundle $A$ of odd rank $2m+1$. By an almost cosymplectic structure
on $A$ we mean a pair $(\Omega,\eta)$ of a $1$-form $\eta$ and a
$2$-form $\Omega$ on $A$ such that the $(2m+1)$-form $\eta \wedge
\Omega^m$ is everywhere nonzero. It is then clear that $\Omega$ is
of rank $2m$, and, as in the classical case, that there is a unique
vector field $\xi$ on $A$ such that
\begin{equation}\label{fundamental-section}
i_\xi\Omega=0  \qquad\textrm{ and }\qquad i_\xi\eta=1.
\end{equation}
We call the $A$-vector field $\xi$ the Reeb section or the
fundamental $A$-vector field. With the almost cosymplectic structure
$(\Omega,\eta)$ we associate a pair $(\Pi,\xi)$ where $\xi$ is the
Reeb section and $\Pi$ is the $A$-bivector field, called the
fundamental $A$-bivector field, defined by
$$
\Pi(\alpha,\beta)=\Omega(\sharp_{\Omega,\eta}(\alpha),\sharp_{\Omega,\eta}(\beta)),
$$
where $\sharp_{\Omega,\eta}$ is the inverse isomorphism of the
vector bundle isomorphism $\flat_{\Omega,\eta} : A\rightarrow
A^\star$ defined by
$$
\flat_{\Omega,\eta}(a)=-i_a\Omega +\eta(a)\eta.
$$

\begin{lem}\label{sharp-Pixi-Omegaeta}
We have $\sharp_{\Pi,\xi}=\sharp_{\Omega,\eta}$.
\end{lem}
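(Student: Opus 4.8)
The plan is to compute $\sharp_\Pi$ explicitly in terms of $\sharp_{\Omega,\eta}$ and then add the correction term $\alpha(\xi)\xi$ dictated by the definition (\ref{rhoPixi}) of $\sharp_{\Pi,\xi}$. Since $\flat_{\Omega,\eta}$ is a vector bundle isomorphism, $v:=\sharp_{\Omega,\eta}(\beta)$ ranges over all of $\Gamma(A)$ as $\beta$ ranges over $\Gamma(A^\star)$, so it will suffice to identify $\sharp_\Pi(\alpha)$ by testing against all $\beta$.

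First I would record the auxiliary identity $\beta(\xi)=\eta(\sharp_{\Omega,\eta}(\beta))$ for every $\beta\in\Gamma(A^\star)$. Indeed, writing $v:=\sharp_{\Omega,\eta}(\beta)$ so that $\beta=\flat_{\Omega,\eta}(v)=-i_v\Omega+\eta(v)\eta$, and evaluating at $\xi$, the term $i_v\Omega(\xi)=\Omega(v,\xi)=-i_\xi\Omega(v)$ vanishes by (\ref{fundamental-section}), while $\eta(\xi)=1$, leaving $\beta(\xi)=\eta(v)$.

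Next, fix $\alpha\in\Gamma(A^\star)$, set $u:=\sharp_{\Omega,\eta}(\alpha)$ and $y:=u-\alpha(\xi)\xi$, and I would check that $y=\sharp_\Pi(\alpha)$, i.e.\ that $\beta(y)=\Pi(\alpha,\beta)$ for every $\beta$. Writing $v:=\sharp_{\Omega,\eta}(\beta)$ and $\beta=-i_v\Omega+\eta(v)\eta$ gives $\beta(u)=\Omega(u,v)+\eta(v)\eta(u)$, while the auxiliary identity gives $\beta(\xi)=\eta(v)$ and $\alpha(\xi)=\eta(u)$. Hence
$$
\beta(y)=\beta(u)-\alpha(\xi)\beta(\xi)=\Omega(u,v)+\eta(v)\eta(u)-\eta(u)\eta(v)=\Omega(u,v)=\Pi(\alpha,\beta),
$$
the last equality being the definition of $\Pi$. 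Therefore $\sharp_\Pi(\alpha)=\sharp_{\Omega,\eta}(\alpha)-\alpha(\xi)\xi$, and adding $\alpha(\xi)\xi$ yields $\sharp_{\Pi,\xi}(\alpha)=\sharp_\Pi(\alpha)+\alpha(\xi)\xi=\sharp_{\Omega,\eta}(\alpha)$, as claimed.

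The argument is entirely routine; the only points needing care are the sign convention for the interior product ($i_a\Omega(b)=\Omega(a,b)$) and the use of the isomorphism property of $\flat_{\Omega,\eta}$. There is no genuine obstacle — the one observation that makes it work is $\alpha(\xi)=\eta(\sharp_{\Omega,\eta}(\alpha))$, which is exactly what forces the two $\eta(u)\eta(v)$-terms to cancel.
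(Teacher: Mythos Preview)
Your proof is correct and follows essentially the same route as the paper: both arguments hinge on the identity $\alpha(\xi)=\eta(\sharp_{\Omega,\eta}(\alpha))$ and then test against an arbitrary $\beta$ to identify the two maps, the only cosmetic difference being that you first isolate $\sharp_\Pi(\alpha)=\sharp_{\Omega,\eta}(\alpha)-\alpha(\xi)\xi$ and then add back the correction, whereas the paper computes $\beta(\sharp_{\Pi,\xi}(\alpha))$ directly.
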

\begin{proof}
Let $\alpha,\beta \in \Gamma(A^\star)$ and let $a,b$ such that
$\alpha=\flat_{\Omega,\eta}(a)$ and $\beta=\flat_{\Omega,\eta}(b)$.
Notice that $\alpha(\xi)=\flat_{\Omega,\eta}(a)(\xi)=i_\xi\Omega(a)
+\eta(a)\eta(\xi)=\eta(a)$ and likewise $\beta(\xi)=\eta(b)$.
Therefore, we have
$$
\begin{array}{ll}
\beta(\sharp_{\Pi,\xi}(\alpha)) & =
\Pi(\alpha,\beta)+\alpha(\xi)\beta(\xi) \\
& =\Omega(a,b)+\eta(a)\eta(b) \\
& =\flat_{\Omega,\eta}(b)(a) \\
& =\beta\left(\sharp_{\Omega,\eta}(\alpha)\right).
\end{array}
$$
Thus $\sharp_{\Pi,\xi}=\sharp_{\Omega,\eta}$.
\end{proof}

\begin{prop}\label{cosymplectic-fundamental}
Let $(\Omega,\eta)$ be an almost cosymplectic structure on $A$ and
let $(\Pi,\xi)$ be the associated fundamental pair. Let $a,b,c$ be
$A$-vector fields and let $\alpha=\flat_{\Omega,\eta}(a)$,
$\beta=\flat_{\Omega,\eta}(b)$ and $\gamma=\flat_{\Omega,\eta}(c)$.
We have
\begin{enumerate}
\item $\left(\frac{1}{2}[\Pi,\Pi]-\xi \wedge \Pi\right)(\alpha,\beta,\gamma)=\left(d_\rho\Omega+\eta
\wedge \left(d_\rho\eta-\Omega-\mathcal{L}^\rho_\xi
\Omega\right)\right)(a,b,c)$.
\item $\mathcal{L}^\rho_\xi\Pi(\alpha,\beta)=\left(\eta\wedge\mathcal{L}^\rho_\xi\eta-\mathcal{L}^\rho_\xi \Omega \right)(a,b)$.
\end{enumerate}
\end{prop}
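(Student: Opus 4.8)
The plan is to prove both identities by a direct computation in the tensor calculus of the skew algebroid $(A,\rho,[.,.])$, testing each side on the $\flat_{\Omega,\eta}$-images of $A$-vector fields. First I would record the elementary facts that make everything run. From the proof of Lemma~\ref{sharp-Pixi-Omegaeta}: if $\alpha=\flat_{\Omega,\eta}(a)$ and $\beta=\flat_{\Omega,\eta}(b)$, then $\alpha(\xi)=\eta(a)$ and $\Pi(\alpha,\beta)=\Omega(a,b)$; moreover $\flat_{\Omega,\eta}(\xi)=\eta$, and, by (\ref{rhoPixi}) together with Lemma~\ref{sharp-Pixi-Omegaeta}, $\sharp_\Pi(\alpha)=\sharp_{\Pi,\xi}(\alpha)-\alpha(\xi)\xi=a-\eta(a)\xi$. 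Applying $i_\xi$, resp. $\mathcal{L}^\rho_\xi$, to the defining relations (\ref{fundamental-section}) gives $i_\xi\mathcal{L}^\rho_\xi\Omega=0$ (using $\mathcal{L}^\rho_\xi i_\xi-i_\xi\mathcal{L}^\rho_\xi=i_{[\xi,\xi]}=0$) and $(\mathcal{L}^\rho_\xi\eta)(\xi)=0$. Finally, since $\xi\wedge\Pi$ is an $A$-trivector, $(\xi\wedge\Pi)(\alpha,\beta,\gamma)=\oint\alpha(\xi)\Pi(\beta,\gamma)=\oint\eta(a)\Omega(b,c)=(\eta\wedge\Omega)(a,b,c)$, so in part~1 it will suffice to analyse the $\frac{1}{2}[\Pi,\Pi]$ term and then subtract this piece.

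I would do part~(2) first. Expand $\mathcal{L}^\rho_\xi\Pi(\alpha,\beta)=\rho(\xi)\cdot\Omega(a,b)-\Pi(\mathcal{L}^\rho_\xi\alpha,\beta)-\Pi(\alpha,\mathcal{L}^\rho_\xi\beta)$. Evaluating the $1$-form $\mathcal{L}^\rho_\xi(\flat_{\Omega,\eta}a)$ on a test section and using the Leibniz-type expansions together with $\rho(\xi)\cdot\eta(a)=(\mathcal{L}^\rho_\xi\eta)(a)+\eta([\xi,a])$ gives $\mathcal{L}^\rho_\xi\alpha=\flat_{\Omega,\eta}([\xi,a])+\nu_a$ with $\nu_a=-i_a\mathcal{L}^\rho_\xi\Omega+(\mathcal{L}^\rho_\xi\eta)(a)\,\eta+\eta(a)\,\mathcal{L}^\rho_\xi\eta$. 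Hence $\Pi(\mathcal{L}^\rho_\xi\alpha,\beta)=\Omega([\xi,a],b)-\nu_a(\sharp_\Pi\beta)=\Omega([\xi,a],b)-\nu_a(b)+\eta(b)\nu_a(\xi)$; the recorded identities give $\nu_a(\xi)=(\mathcal{L}^\rho_\xi\eta)(a)$, the two symmetric $\eta$-terms cancel, and $\Pi(\mathcal{L}^\rho_\xi\alpha,\beta)=\Omega([\xi,a],b)+\mathcal{L}^\rho_\xi\Omega(a,b)-\eta(a)(\mathcal{L}^\rho_\xi\eta)(b)$. Adding the symmetric expression for $\Pi(\alpha,\mathcal{L}^\rho_\xi\beta)$ and using $\rho(\xi)\cdot\Omega(a,b)-\Omega([\xi,a],b)-\Omega(a,[\xi,b])=\mathcal{L}^\rho_\xi\Omega(a,b)$ collapses the total to $-\mathcal{L}^\rho_\xi\Omega(a,b)+\eta(a)(\mathcal{L}^\rho_\xi\eta)(b)-\eta(b)(\mathcal{L}^\rho_\xi\eta)(a)=(\eta\wedge\mathcal{L}^\rho_\xi\eta-\mathcal{L}^\rho_\xi\Omega)(a,b)$, which is part~(2).

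For part~(1) the route is the same in spirit but longer. I would start from the torsion formula (\ref{torsion-alg-alt-asso-Pi}), $\frac{1}{2}[\Pi,\Pi](\alpha,\beta,\gamma)=\gamma\bigl(\sharp_\Pi([\alpha,\beta]_\Pi)\bigr)-\gamma\bigl([\sharp_\Pi(\alpha),\sharp_\Pi(\beta)]\bigr)$, and substitute $\sharp_\Pi(\alpha)=a-\eta(a)\xi$. In $\gamma([\sharp_\Pi(\alpha),\sharp_\Pi(\beta)])$ the Leibniz identity produces the pure term $\gamma([a,b])$ together with $\eta$- and $\rho$-contaminated terms; in $\gamma(\sharp_\Pi([\alpha,\beta]_\Pi))=-[\alpha,\beta]_\Pi(c)+\eta(c)\,[\alpha,\beta]_\Pi(\xi)$ I would plug in the Koszul bracket (\ref{bracketPi}) with $\Pi(\alpha,\beta)=\Omega(a,b)$, expand each Lie derivative using $\beta(c)=-\Omega(b,c)+\eta(b)\eta(c)$ and $\beta(\xi)=\eta(b)$, and repeatedly invoke $i_\xi\Omega=0$, $i_\xi\mathcal{L}^\rho_\xi\Omega=0$, $\eta(\xi)=1$, $(\mathcal{L}^\rho_\xi\eta)(\xi)=0$. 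The contributions built only from $\Omega$ and brackets recombine, via the de Rham-type formula (\ref{A-diff}) for $d_\rho$ on $2$-forms, into $d_\rho\Omega(a,b,c)$; the terms carrying exactly one factor $\eta$ recombine, via (\ref{A-diff}) for $d_\rho\eta$, into $(\eta\wedge d_\rho\eta)(a,b,c)$; and the $\mathcal{L}^\rho_\xi$ that enters because $\sharp_\Pi(\alpha)$ has a $\xi$-component contributes precisely $-(\eta\wedge\mathcal{L}^\rho_\xi\Omega)(a,b,c)$. Subtracting $(\xi\wedge\Pi)(\alpha,\beta,\gamma)=(\eta\wedge\Omega)(a,b,c)$ then yields $\bigl(d_\rho\Omega+\eta\wedge(d_\rho\eta-\Omega-\mathcal{L}^\rho_\xi\Omega)\bigr)(a,b,c)$, as claimed.

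The main difficulty is not conceptual but organizational: in part~(1) one must keep track of a dozen or so terms and correctly separate those that assemble into $d_\rho\Omega$ from those that assemble into $\eta\wedge d_\rho\eta$ and $\eta\wedge\mathcal{L}^\rho_\xi\Omega$, applying the sign conventions of (\ref{A-diff}) and of the exterior product on $A$-forms consistently; the collapse of the $\eta$-heavy terms rests entirely on $i_\xi\Omega=0$, $i_\xi\mathcal{L}^\rho_\xi\Omega=0$, $\eta(\xi)=1$, $(\mathcal{L}^\rho_\xi\eta)(\xi)=0$. Note that neither $d_\rho\circ d_\rho=0$, nor the Jacobi identity, nor compatibility of $\rho$ with brackets is used, so the statement holds for an arbitrary skew algebroid.
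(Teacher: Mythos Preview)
Your plan is correct and for part~(2) is essentially the paper's own argument, only packaged differently: you compute $\mathcal{L}^\rho_\xi\alpha$ as a $1$-form first and then pair, whereas the paper evaluates $\Pi(\mathcal{L}^\rho_\xi\alpha,\beta)=-\mathcal{L}^\rho_\xi\alpha(\sharp_\Pi(\beta))$ directly; the two computations are line-by-line equivalent.

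For part~(1) there is a genuine organizational difference worth noting. You propose to expand the Koszul bracket $[\alpha,\beta]_\Pi$ via (\ref{bracketPi}) and then sort a dozen terms by hand into the three blocks $d_\rho\Omega$, $\eta\wedge d_\rho\eta$, $\eta\wedge\mathcal{L}^\rho_\xi\Omega$. The paper avoids ever unpacking the Koszul bracket: it first isolates the auxiliary identity $\eta\bigl([\sharp_\Pi(\alpha),\sharp_\Pi(\beta)]\bigr)=(\eta\wedge\mathcal{L}^\rho_\xi\eta-d_\rho\eta)(a,b)$, then uses (\ref{torsion-alg-alt-asso-Pi}) to write $\Omega\bigl([\sharp_\Pi(\alpha),\sharp_\Pi(\beta)],\sharp_\Pi(\gamma)\bigr)$ in terms of $[\Pi,\Pi]$ and $\Pi([\alpha,\beta]_\Pi,\gamma)$, and finally evaluates $d_\rho\Omega$ not on $a,b,c$ but on $\sharp_\Pi(\alpha),\sharp_\Pi(\beta),\sharp_\Pi(\gamma)$; the cyclic identity (\ref{CrochetPiPi}) then collapses everything to $\tfrac{1}{2}[\Pi,\Pi]-\eta\wedge d_\rho\eta$, and a single application of $i_\xi\Omega=0$ together with the Cartan formula bridges back to $d_\rho\Omega(a,b,c)$, producing the $\eta\wedge\mathcal{L}^\rho_\xi\Omega$ term in one stroke. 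Your route is more elementary and self-contained; the paper's route trades the term-sorting for a short detour through $d_\rho\Omega$ on shifted arguments, which keeps the bookkeeping much lighter. Either way the same ingredients ($i_\xi\Omega=0$, $i_\xi\mathcal{L}^\rho_\xi\Omega=0$, $\eta(\xi)=1$, $(\mathcal{L}^\rho_\xi\eta)(\xi)=0$) do the work, and your remark that no Lie-algebroid hypothesis is needed is correct.
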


\begin{proof}
Notice that since we have $\alpha(\xi)=\eta(a)$,
$\beta(\xi)=\eta(b)$, $\gamma(\xi)=\eta(c)$ and, by the lemma above,
$\sharp_{\Omega,\eta}=\sharp_{\Pi,\xi}$, it comes that
$\sharp_{\Pi}(\alpha)=a-\eta(a)\xi$,
$\sharp_{\Pi}(\beta)=b-\eta(b)\xi$ and
$\sharp_{\Pi}(\gamma)=c-\eta(c)\xi$. Let us now do some
computations. First, we have
$$
\begin{array}{lll}
[\sharp_\Pi(\alpha),\sharp_\Pi(\beta)] &\!\!\!\!\!\!
=\!\!\!&\!\!\!\!\!\!
[a,b]-\eta(a)[\xi,b]+\eta(b)[\xi,a] \\
&&\!\!\!\!\!\!
\left(\rho(b)(\eta(a))-\rho(a)(\eta(b))+\eta(a)\rho(\xi)(\eta(b))-\eta(b)\rho(\xi)(\eta(a))\right)\xi.
\end{array}
$$
Applying $\eta$  and since $\eta(\xi)=1$, we get
$$
\begin{array}{lll}
\eta\left([\sharp_\Pi(\alpha),\sharp_\Pi(\beta)]\right)&=&\eta\left([a,b]\right)-\rho(a)\left(\eta(b)\right)+\rho(b)\left(\eta(a)\right)
\\
&&\eta(a)\mathcal{L}^\rho_\xi\eta(b)-\eta(b)\mathcal{L}^\rho_\xi\eta(a),
\end{array}
$$
and therefore
\begin{equation}\label{etabracket}
\eta\left([\sharp_\Pi(\alpha),\sharp_\Pi(\beta)]\right)=\left(\eta
\wedge \mathcal{L}^\rho_\xi\eta-d_\rho\eta\right)(a,b).
\end{equation}
Also, using $i_\xi\Omega=0$, we get
$$
\begin{array}{lll}
\Omega\left(\left[\sharp_\Pi(\alpha),\sharp_{\Pi}(\beta)\right],\sharp_\Pi(\gamma)\right)
& = & -i_{\sharp_\Pi(\gamma)}\Omega\left(\left[\sharp_\Pi(\alpha),\sharp_\Pi(\beta)\right]\right) \\
& = & -i_{c}\Omega\left(\left[\sharp_\Pi(\alpha),\sharp_\Pi(\beta)\right]\right) \\
& = & \left(\gamma-\eta(c)\eta\right) \left(\left[\sharp_\Pi(\alpha),\sharp_\Pi(\beta)\right]\right),
\end{array}
$$
and then, using (\ref{torsion-alg-alt-asso-Pi}) and the relation (\ref{etabracket}) above, we get
$$
\begin{array}{lll}
\Omega\left(\left[\sharp_\Pi(\alpha),\sharp_{\Pi}(\beta)\right],\sharp_\Pi(\gamma)\right)
& = & -\frac{1}{2}\left[\Pi,\Pi\right](\alpha,\beta,\gamma)+\Pi\left([\alpha,\beta]_\Pi,\gamma\right)\\
&   & -\eta(c)\left(\eta\wedge \mathcal{L}^\rho_\xi\eta-d_\rho\eta\right)(a,b).
\end{array}
$$
On the other hand we clearly have
$$
\rho\left(\sharp_{\Pi}(\alpha)\right)\cdot\Omega\left(\sharp_\Pi(\beta),\sharp_\Pi(\gamma)\right)
=\rho_\Pi(\alpha)\cdot\Pi(\beta,\gamma).
$$
Using the two relations above and the identity (\ref{CrochetPiPi}), we get
$$
d_\rho\Omega\left(\sharp_{\Pi}(\alpha),\sharp_{\Pi}(\beta),\sharp_{\Pi}(\gamma)\right)
=\frac{1}{2}\left[\Pi,\Pi\right](\alpha,\beta,\gamma)-\eta\wedge d_\rho\eta(a,b,c).
$$
Also, observe that since $i_\xi\Omega=0$, using the Cartan formula we have
$$
d_\rho\Omega(a,b,c)=d_\rho\Omega\left(\sharp_{\Pi}(\alpha),\sharp_{\Pi}(\beta),\sharp_{\Pi}(\gamma)\right)+\eta\wedge\mathcal{L}^\rho_\xi\Omega(a,b,c).
$$
Finally, from the two identities above it follows that
$$
\frac{1}{2}\left[\Pi,\Pi\right](\alpha,\beta,\gamma)
=\left(d_\rho\Omega+\eta\wedge\left(d_\rho\eta-\mathcal{L}^\rho_\xi\Omega\right)\right)(a,b,c),
$$
and it remains only to notice that $\xi\wedge
\Pi(\alpha,\beta,\gamma)=\eta\wedge\Omega(a,b,c)$ to get the first
assertion of the proposition. Let us now prove the second one. On
one hand, we have
\begin{equation}\label{LieXiPi}
\mathcal{L}^\rho_\xi\Pi(\alpha,\beta)=\rho(\xi)\cdot\Pi(\alpha,\beta)
-\Pi\left(\mathcal{L}^\rho_\xi\alpha,\beta\right)-\Pi\left(\alpha,\mathcal{L}^\rho_\xi\beta\right).
\end{equation}
On the other hand, we have
$$
\Pi\left(\mathcal{L}^\rho_\xi\alpha,\beta\right)=-\mathcal{L}^\rho_\xi\alpha\left(\sharp_\Pi(\beta)\right)
=\rho(\xi)\cdot\Pi(\alpha,\beta)+\alpha\left([\xi,\sharp_\Pi(\beta)]\right),
$$
and, since we have $\Pi(\alpha,\beta)=\Omega(a,b)$,
$\sharp_\Pi(\beta)=b-\eta(b)\xi$ and $\alpha(\xi)=\eta(a)$, it
follows that
$$
\begin{array}{lll}
\Pi\left(\mathcal{L}^\rho_\xi\alpha,\beta\right) & = &
\rho(\xi)\cdot\Omega(a,b)+\alpha\left([\xi,b]\right)-\eta(a)\rho(\xi)\left(\eta(b)\right)\\
& = &
\rho(\xi)\cdot\Omega(a,b)+\alpha\left([\xi,b]\right)-\eta(a)\eta([\xi,b])-\eta(a)\mathcal{L}^\rho_\xi\eta(b).
\end{array}
$$
Since $\alpha=\flat_{\Omega,\eta}(a)$, then
$\alpha\left([\xi,b]\right)-\eta(a)\eta([\xi,b])=-i_a\Omega([\xi,b])$,
and therefore
$$
\Pi\left(\mathcal{L}^\rho_\xi\alpha,\beta\right)=\rho(\xi)\cdot\Omega(a,b)-\Omega\left(a,[\xi,b]\right)-\eta(a)\mathcal{L}^\rho_\xi\eta(b).
$$
Interchanging $\alpha$ and $\beta$ we also get
$$
\Pi\left(\alpha,\mathcal{L}^\rho_\xi\beta\right)=-\Pi(\mathcal{L}^\rho_\xi\beta,\alpha)
=\rho(\xi)\cdot\Omega(a,b)-\Omega\left([\xi,a],b\right)+\eta(b)\mathcal{L}^\rho_\xi\eta(a).
$$
Substituting in (\ref{LieXiPi}), we get
$$
\mathcal{L}^\rho_\xi\Pi(\alpha,\beta)=-\rho(\xi)\cdot\Omega(a,b)+\Omega\left(a,[\xi,b]\right)
+\Omega\left([\xi,a],b\right)+\eta(a)\mathcal{L}^\rho_\xi\eta(b)-\eta(b)\mathcal{L}^\rho_\xi\eta(a)
$$
which proves the second assertion of the proposition.
\end{proof}

\subsubsection{Contact Lie algebroids}

Let $\left(A,\rho ,\left[ .,.\right] \right) $ be a skew algebroid with
the underlying vector bundle $A$
of odd rank $2m+1$. A contact structure on $A$ is an $A$-form $\eta$ of degree $1$ such that the $A$-form $\eta \wedge \left(
d_\rho\eta \right)^m$ of degree $2m+1$ does not vanish. So, a
contact structure on $A$ is an almost cosymplectic structure $(\Omega,\eta)$ on $A$ 
with $\Omega=d_\rho\eta$. If $(A,\rho,[.,.])$ is a skew (resp. almost Lie, Lie) algebroid and $\eta$ is a contact structure on $A$ we
call $(A,\rho,[.,.] ,\eta)$ a contact skew (resp. almost Lie, Lie) 
algebroid. 

Let $(\Omega,\eta)$ be an almost cosymplectic structure on a skew algebroid $(A,\rho,[.,.])$ and
let $\xi$ be the associated Reeb section. By the relations
(\ref{fundamental-section}) and the Cartan formula (\ref{cartan}) we
have :
\begin{equation}\label{cosymplectic-cartan}
\mathcal{L}^\rho_\xi\Omega =i_\xi\left(d_\rho\Omega\right)
\qquad\textrm{ and }\qquad
\mathcal{L}^\rho_\xi\eta=i_\xi\left(d_\rho\eta\right).
\end{equation}
If $(A,\rho,[.,.])$ is a Lie algebroid and $\eta$ is a contact structure on $A$, i.e. an almost
cosymplectic structure $(\Omega,\eta)$ on $A$ such that
$\Omega=d_\rho\eta$, since we have $d_\rho\left(d_\rho\eta\right)=0$
and $i_\xi\Omega=0$, it follows that
\begin{equation}\label{contact-cartan}
\mathcal{L}^\rho_\xi\Omega=\mathcal{L}^\rho_\xi \left( d_\rho \eta
\right) =0\qquad\textrm{ and }\qquad \mathcal{L}^\rho_\xi \eta =0.
\end{equation}

The following theorem says that on a Lie algebroid the contact structures are precisely the Jacobi structures $(\Pi,\xi)$ such that $\xi\wedge \Pi^m$ is everywhere nonzero.

\begin{thm}
Let $\left(A,\rho,\left[ .,.\right] \right)$ be a Lie algebroid. Let
$(\Omega,\eta)$ be an almost cosymplectic structure on $A$ and
$(\Pi,\xi)$ be the associated fundamental pair. The pair $(\Omega,\eta)$ is a contact structure if and only if the pair $(\Pi,\xi)$ is a Jacobi structure.
\end{thm}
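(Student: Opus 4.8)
The plan is to reduce the whole statement to Proposition \ref{cosymplectic-fundamental} by using that $\flat_{\Omega,\eta}\colon A\to A^\star$ is a vector bundle isomorphism. Since $a\mapsto\alpha=\flat_{\Omega,\eta}(a)$ is a bijection on sections, the two formulae of that proposition upgrade to equivalences of tensor identities:
$$
[\Pi,\Pi]=2\xi\wedge\Pi \iff d_\rho\Omega+\eta\wedge\bigl(d_\rho\eta-\Omega-\mathcal{L}^\rho_\xi\Omega\bigr)=0,
$$
$$
\mathcal{L}^\rho_\xi\Pi=0 \iff \eta\wedge\mathcal{L}^\rho_\xi\eta-\mathcal{L}^\rho_\xi\Omega=0.
$$
Thus $(\Pi,\xi)$ is a Jacobi structure if and only if both right-hand identities hold, and the theorem becomes a statement purely about the pair $(\Omega,\eta)$.

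For the direct implication I would simply substitute $\Omega=d_\rho\eta$: then $d_\rho\Omega=d_\rho d_\rho\eta=0$ because $A$ is a Lie algebroid, and $\mathcal{L}^\rho_\xi\Omega=\mathcal{L}^\rho_\xi\eta=0$ by (\ref{contact-cartan}). Both right-hand identities above collapse to $0=0$, so $(\Pi,\xi)$ is a Jacobi structure.

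For the converse I would assume both right-hand identities and first extract that $\mathcal{L}^\rho_\xi\eta=0$. Contracting $\eta\wedge\mathcal{L}^\rho_\xi\eta=\mathcal{L}^\rho_\xi\Omega$ with $\xi$ and rewriting $\mathcal{L}^\rho_\xi\eta=i_\xi d_\rho\eta$, $\mathcal{L}^\rho_\xi\Omega=i_\xi d_\rho\Omega$ via (\ref{cosymplectic-cartan}), all terms except $\mathcal{L}^\rho_\xi\eta$ become double contractions $i_\xi i_\xi$ and vanish (recall $i_\xi\eta=1$), forcing $\mathcal{L}^\rho_\xi\eta=0$ and then $\mathcal{L}^\rho_\xi\Omega=0$. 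The first identity now reads $d_\rho\Omega+\eta\wedge(d_\rho\eta-\Omega)=0$. Setting $\theta:=d_\rho\eta-\Omega$, this says $d_\rho\theta=-d_\rho\Omega=\eta\wedge\theta$, again using $d_\rho d_\rho\eta=0$. Contracting once more with $\xi$: the right side is $\theta$, since $i_\xi\theta=i_\xi d_\rho\eta-i_\xi\Omega=\mathcal{L}^\rho_\xi\eta=0$; the left side is $i_\xi d_\rho\theta=\mathcal{L}^\rho_\xi\theta-d_\rho i_\xi\theta=0$ by the Cartan formula, using $\mathcal{L}^\rho_\xi\theta=d_\rho\mathcal{L}^\rho_\xi\eta-\mathcal{L}^\rho_\xi\Omega=0$. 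Hence $\theta=0$, i.e.\ $\Omega=d_\rho\eta$, so $(\Omega,\eta)$ is contact.

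The main obstacle is this last converse step: one cannot read $\Omega=d_\rho\eta$ directly off $d_\rho\Omega+\eta\wedge(d_\rho\eta-\Omega)=0$ without first establishing $\mathcal{L}^\rho_\xi\eta=0$, and both that and the final contraction argument rest essentially on $d_\rho\circ d_\rho=0$ (hence on the Lie algebroid hypothesis) together with the commutation $d_\rho\mathcal{L}^\rho_\xi=\mathcal{L}^\rho_\xi d_\rho$, which itself follows from the Cartan formula (\ref{cartan}) and $d_\rho\circ d_\rho=0$. Everything else is routine interior-product bookkeeping.
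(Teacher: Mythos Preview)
Your proof is correct and follows essentially the same line as the paper's: both reduce everything to Proposition~\ref{cosymplectic-fundamental}, dispatch the forward implication via (\ref{contact-cartan}), and for the converse first contract with $\xi$ to force $\mathcal{L}^\rho_\xi\eta=\mathcal{L}^\rho_\xi\Omega=0$, then contract the remaining identity $d_\rho\Omega+\eta\wedge(d_\rho\eta-\Omega)=0$ with $\xi$ to conclude $\Omega=d_\rho\eta$. Your final step detours through $d_\rho\theta=\eta\wedge\theta$ and the commutation $d_\rho\mathcal{L}^\rho_\xi=\mathcal{L}^\rho_\xi d_\rho$, whereas the paper contracts the original 3-form identity directly (using $i_\xi d_\rho\Omega=\mathcal{L}^\rho_\xi\Omega=0$), but this is a cosmetic difference.
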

\begin{proof}
Assume that $\Omega=d_\rho\eta$. That $(\Pi,\xi)$ is a Jacobi
structure is a direct consequence of Proposition
\ref{cosymplectic-fundamental} and the relations
(\ref{contact-cartan}) above. Inversely, assume that $(\Pi,\xi)$ is
a Jacobi structure on $A$. Since $\mathcal{L}^\rho_\xi\Pi=0$, by the
second assertion of Proposition \ref{cosymplectic-fundamental}, it
follows that
\begin{equation}\label{omega-eta-L-eta}
\mathcal{L}^\rho_\xi\Omega=\eta\wedge \mathcal{L}^\rho_\xi\eta.
\end{equation}
By the second relation in (\ref{cosymplectic-cartan}), for any
$a\in\Gamma(A)$ we have
$$
\mathcal{L}^\rho_\xi\eta(a)=i_\xi\left(d_\rho\eta\right)(a)=\eta\wedge
i_\xi\left(d_\rho\eta\right)(\xi,a)=\eta\wedge\mathcal{L}^\rho_\xi\eta(\xi,a),
$$
and then, by (\ref{omega-eta-L-eta}) and the first relation in
(\ref{cosymplectic-cartan}), it follows that
$$
\mathcal{L}^\rho_\xi\eta(a)=\mathcal{L}^\rho_\xi\Omega(\xi,a)=i_\xi\left(d_\rho\Omega\right)(\xi,a)=0.
$$
Hence, $\mathcal{L}^\rho_\xi\eta=0$, and by (\ref{omega-eta-L-eta})
again, it follows that $\mathcal{L}^\rho_\xi\Omega=0$. Now, since we
also have $[\Pi,\Pi]=2\xi\wedge\Pi$, by the first assertion of
Proposition \ref{cosymplectic-fundamental}, it follows that $
d_\rho\Omega+\eta\wedge\left(d_\rho\eta - \Omega\right)=0$, and
hence, for $a,b\in\Gamma(A)$, that
$$
d_\rho\Omega(\xi,a,b)+\eta\wedge\left(d_\rho\eta -
\Omega\right)(\xi,a,b)=0.
$$
On the other hand, since $d_\rho\Omega(\xi,a,b)=i_\xi
\left(d_\rho\Omega\right)(a,b)=\mathcal{L}^\rho_\xi\Omega(a,b)=0$ and 
$i_\xi\left(d_\rho\eta\right)=\mathcal{L}^\rho_\xi\eta=0$, and by the
relations (\ref{fundamental-section}), we have
$$
d_\rho\Omega(\xi,a,b)+\eta\wedge\left(d_\rho\eta -
\Omega\right)(\xi,a,b)=\left(d_\rho\eta-\Omega\right)(a,b).
$$
Therefore $\left(d_\rho\eta-\Omega\right)(a,b)=0$.
\end{proof}

\begin{rem}\label{dual-contact-Lie-alg}
\begin{enumerate}
\item If $(A,\rho,[.,.])$  in the theorem above is just a skew algebroid, we see from the proof that $(\Pi,\xi)$ being Jacobi still implies $(\Omega,\eta)$ is contact. 
\item With a contact Lie algebroid $(A,\rho,[.,.],\eta)$ we naturally associate
a Lie algebroid structure on the dual bundle $A^\star$ of
$A$ as follows. Let $(\Pi,\xi)$ be the Jacobi structure associated with the contact structure $\eta$. Put $\flat_\eta=\flat_{\Omega,\eta}$ and $\sharp_{\eta}=\sharp_{\Omega,\eta}(=\flat_\eta^{-1})$ with $\Omega=d_\rho\eta$. By
Lemma \ref{sharp-Pixi-Omegaeta},  we have $\sharp_{\Pi,\xi}=\sharp_{\eta}$, so
$\sharp_{\Pi,\xi}$ is an isomorphism satisfying $\sharp_{\Pi,\xi}(\eta)=\xi$,
and hence, by Theorem \ref{alg-alt-jacobiO}, an isomorphim satisfying
$$
\sharp_{\Pi,\xi}\left(\left[\alpha,\beta\right]_{\Pi,\xi }^\eta\right)= \left[\sharp_{\Pi,\xi}(\alpha),\sharp_{\Pi,\xi}(\beta)\right].
$$
Therefore, the skew algebroid $(A^\star,\rho_{\Pi,\xi},[.,.]_{\Pi,\xi}^\eta)$
is a Lie algebroid isomorphic to the Lie algebroid $(A,\rho,[.,.])$. If we put $\rho_\eta=\rho\circ \sharp_\eta$ and $[.,.]_\eta=[.,.]_{\Pi,\xi}^\eta$, the Lie algebroid $(A^\star,\rho_\eta,[.,.]_\eta)$
may be called the dual Lie algebroid of the contact Lie algebroid $(A,\rho,[.,.],\eta)$.
\end{enumerate}
\end{rem}

\subsection{Locally conformally symplectic Lie algebroids}

Let $\left(A,\rho ,\left[ .,.\right] \right)$ be a skew (resp. almost Lie, Lie) algebroid on
$M$. A locally conformally symplectic structure on $(A,\rho,[.,.])$ is a pair
$(\Omega,\theta)$ of a closed $1$-form $\theta$ and a nondegenerate 2-form
$\Omega$ on $A$ such that
\begin{equation*}
d_{\rho}\Omega +\theta \wedge \Omega =0.
\end{equation*}%
We also say that $(A,\rho,[.,.],\Omega,\theta)$ is a
locally conformally symplectic skew (resp. almost Lie, Lie) algebroid. In case $\theta$ is
exact, i. e. $ \theta =d_{\rho}f$ for some smooth function $f\in
C^\infty(M)$, we say that $(A,\rho,[.,.],\Omega,\theta)$
is a conformally symplectic skew (resp. almost Lie, Lie) algebroid, it is equivalent to $(
A,\rho,[.,.] ,e^f\Omega)$ being a symplectic skew (resp. almost Lie, Lie) algebroid.

Let $\Omega$ be a nondegenerate 2-form on $A$ and let $\theta \in
\Gamma \left(A^{\star }\right)$. With the pair $(\Omega,\theta)$ we
associate a contravariant pair $(\Pi,\xi) $ as follows :
\begin{equation*}
\Pi(\alpha,\beta)=\Omega \left(
\sharp_\Omega(\alpha),\sharp_\Omega(\beta) \right) \quad\textrm{ and
}\quad \xi =\sharp_\Omega(\theta),
\end{equation*}
where $\sharp_\Omega$ is the inverse isomorphism of the vector
bundle isomorphism $\flat_\Omega :A \rightarrow A^{\star }$,
$\flat_\Omega(a) =-i_a\Omega $. Clearly $\Pi$ is nondegenerate and
$\sharp_\Pi=\sharp_\Omega$. So we have a one-to-one correspondence between such pairs $(\Omega,\theta)$
and the pairs $(\Pi,\xi)$ with $\Pi$ a nondegenerate $A$-bivector field and
$\xi$ an $A$-vector field. We recover the pair $(\Omega,\theta)$ by the relations
$$
\Omega(a,b)=\Pi(\sharp_\Pi^{-1}(a),\sharp_\Pi^{-1}(a)) \quad\textrm{ and }\quad \theta=\sharp_\Pi^{-1}(\xi).
$$
We have the following :

\begin{prop}\label{Omega-theta-Pi-xi}
Let $(A,\rho,[.,.])$ be a skew algebroid. Let $(\Omega,\theta)$ and $(\Pi,\xi)$ be two pairs as above. Let
$a,b,c$ be $A$-vector fields and let $\alpha=\flat_\Omega(a)$,
$\beta=\flat_\Omega(b)$ and $\gamma=\flat_\Omega(c)$. We have
\begin{enumerate}
\item $\left( \frac{1}{2}\left[ \Pi ,\Pi \right] -\xi \wedge \Pi
\right)( \alpha ,\beta ,\gamma)=\left( d_{\rho}\Omega +\theta \wedge
\Omega \right)(a,b,c)$.
\item $\mathcal{L}^\rho_\xi\Pi(\alpha,\beta)=-\mathcal{L}^\rho_\xi\Omega(a,b)$.
\end{enumerate}
\end{prop}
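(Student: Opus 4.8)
The plan is to establish the two identities separately, using throughout that $\alpha=\flat_\Omega(a)$ means $\sharp_\Omega(\alpha)=a$ (and likewise $\sharp_\Omega(\beta)=b$, $\sharp_\Omega(\gamma)=c$), that $\sharp_\Pi=\sharp_\Omega$, and that $\xi=\sharp_\Omega(\theta)$ is equivalent to $\theta=\flat_\Omega(\xi)=-i_\xi\Omega$, so that $\theta(x)=-\Omega(\xi,x)$ for every $x\in\Gamma(A)$.

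For assertion (1) I would invoke Remark \ref{symp-Lie-algebroid}: applied to the nondegenerate $2$-form $\Omega$ and its associated bivector $\Pi$, it gives $[\Pi,\Pi](\alpha,\beta,\gamma)=2\,d_\rho\Omega(\sharp_\Omega(\alpha),\sharp_\Omega(\beta),\sharp_\Omega(\gamma))=2\,d_\rho\Omega(a,b,c)$, hence $\frac{1}{2}[\Pi,\Pi](\alpha,\beta,\gamma)=d_\rho\Omega(a,b,c)$. It then remains to identify $(\xi\wedge\Pi)(\alpha,\beta,\gamma)$. Expanding the trivector pairing gives the cyclic sum $\oint\alpha(\xi)\,\Pi(\beta,\gamma)$; here $\Pi(\beta,\gamma)=\Omega(\sharp_\Omega(\beta),\sharp_\Omega(\gamma))=\Omega(b,c)$, while $\alpha(\xi)=\flat_\Omega(a)(\xi)=(-i_a\Omega)(\xi)=-\Omega(a,\xi)=-\theta(a)$. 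Therefore $(\xi\wedge\Pi)(\alpha,\beta,\gamma)=-\oint\theta(a)\,\Omega(b,c)=-(\theta\wedge\Omega)(a,b,c)$, and subtracting yields $\left(\frac{1}{2}[\Pi,\Pi]-\xi\wedge\Pi\right)(\alpha,\beta,\gamma)=(d_\rho\Omega+\theta\wedge\Omega)(a,b,c)$. This step is essentially bookkeeping once the sign $\alpha(\xi)=-\theta(a)$ is pinned down.

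For assertion (2) I would start from the expansion $\mathcal{L}^\rho_\xi\Pi(\alpha,\beta)=\rho(\xi)\cdot\Pi(\alpha,\beta)-\Pi(\mathcal{L}^\rho_\xi\alpha,\beta)-\Pi(\alpha,\mathcal{L}^\rho_\xi\beta)$ used in (\ref{LieXiPi}), and reduce each term to $\Omega$. The key input is the Lie derivative of $\alpha=-i_a\Omega$: from the commutation identity $\mathcal{L}^\rho_\xi\circ i_a-i_a\circ\mathcal{L}^\rho_\xi=i_{[\xi,a]}$ (valid on any skew algebroid, since the graded commutator of the degree $0$ derivation $\mathcal{L}^\rho_\xi$ with the degree $-1$ derivation $i_a$ is again a degree $-1$ derivation, hence determined by its action on functions and on $A$-forms of degree one) one gets $\mathcal{L}^\rho_\xi\alpha=-i_a(\mathcal{L}^\rho_\xi\Omega)+\flat_\Omega([\xi,a])$. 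Pairing with $\beta$ through $\Pi(\mu,\beta)=\Omega(\sharp_\Omega(\mu),b)$ and the relation $\Omega(\sharp_\Omega(\mu),b)=-\mu(b)$ gives $\Pi(\mathcal{L}^\rho_\xi\alpha,\beta)=\mathcal{L}^\rho_\xi\Omega(a,b)+\Omega([\xi,a],b)$, and by the same computation with the roles of $a,\alpha$ and $b,\beta$ exchanged, $\Pi(\alpha,\mathcal{L}^\rho_\xi\beta)=\mathcal{L}^\rho_\xi\Omega(a,b)+\Omega(a,[\xi,b])$. Substituting into the expansion and using $\mathcal{L}^\rho_\xi\Omega(a,b)=\rho(\xi)\cdot\Omega(a,b)-\Omega([\xi,a],b)-\Omega(a,[\xi,b])$ to eliminate the bracket terms collapses the right-hand side to $-\mathcal{L}^\rho_\xi\Omega(a,b)$.

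The only delicate point I anticipate is assertion (2): one must track $\mathcal{L}^\rho_\xi$ acting both on $A$-forms and on the dual $A$-multivector $\Pi$, keep the $\flat_\Omega$/$\sharp_\Omega$ sign conventions consistent, and apply the $i_a$/$\mathcal{L}^\rho_\xi$ commutation rule, which, not relying on the Jacobi identity, is legitimate on a mere skew algebroid. Everything else is a routine unwinding of definitions, parallel to — but simpler than — the proof of Proposition \ref{cosymplectic-fundamental}, the simplification coming from $\Pi$ being nondegenerate with $\sharp_\Pi=\sharp_\Omega$.
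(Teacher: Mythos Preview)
Your proof is correct and follows essentially the same route as the paper. The only differences are organizational: for assertion~(1) you invoke Remark~\ref{symp-Lie-algebroid} directly to obtain $\frac{1}{2}[\Pi,\Pi](\alpha,\beta,\gamma)=d_\rho\Omega(a,b,c)$, whereas the paper re-derives this identity in place via (\ref{torsion-alg-alt-asso-Pi}) and (\ref{CrochetPiPi}); for assertion~(2) you route the computation through the commutator $[\mathcal{L}^\rho_\xi,i_a]=i_{[\xi,a]}$, while the paper simply expands $\Pi(\mathcal{L}^\rho_\xi\alpha,\beta)=-\mathcal{L}^\rho_\xi\alpha(b)$ using the definition of the Lie $A$-derivative on $1$-forms --- both paths arrive at the same intermediate expressions.
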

\begin{proof}
Using the identity (\ref{torsion-alg-alt-asso-Pi}), we get
$$
\Omega([a,b],c)=\gamma([a,b])=-\frac{1}{2}\left[\Pi,\Pi\right](\alpha,\beta,\gamma)+\Pi(\left[\alpha,\beta\right]_\Pi,\gamma).
$$
We also have $\rho(a)\cdot\Omega(b,c)=\rho_\Pi(\alpha)\cdot\Pi(\beta,\gamma)$. Therefore, by a direct calculation using the identity (\ref{CrochetPiPi}), we get
\begin{equation}\label{domega}
d_\rho\Omega(a,b,c)=\frac{1}{2}\left[\Pi,\Pi\right](\alpha,\beta,\gamma).
\end{equation}
On the other hand, notice that
$\theta(a)=-i_\xi\Omega(a)=i_a\Omega(\xi)=-\alpha(
\xi)$, likewise $\theta(b)=-\beta(\xi)$ and
$\theta(c)=-\gamma(\xi)$, thus
$\theta\wedge\Omega(a,b,c)=-\xi\wedge\Pi(\alpha,\beta,\gamma)$.
Hence, with (\ref{domega}), we get the first assertion of the proposition.
For the second assertion, is suffices to notice that
$$
\Pi(\mathcal{L}^\rho_\xi\alpha,\beta)=-\mathcal{L}^\rho_\xi\alpha(b)
=-\rho(\xi)(\alpha(b))+\alpha(\mathcal{L}^\rho_{\xi}b)=\rho(\xi)\cdot\Omega(a,b)-\Omega(a,\mathcal{L}^\rho_\xi
b).
$$
\end{proof}

The following theorem shows that the
locally conformally symplectic structures are precisely the Jacobi
structures $(\Pi,\xi)$ with a nondegenerate underlying bivector field $\Pi$.

\begin{thm}
The pair $(\Omega,\theta)$ is a locally conformally symplectic
structure if and only if the pair $(\Pi,\xi)$ is Jacobi.
\end{thm}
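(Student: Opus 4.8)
The plan is to read everything off Proposition \ref{Omega-theta-Pi-xi} together with the Cartan formula \eqref{cartan}. Recall that $(\Pi,\xi)$ being Jacobi means $[\Pi,\Pi]=2\xi\wedge\Pi$ and $\mathcal{L}^\rho_\xi\Pi=0$, while $(\Omega,\theta)$ being locally conformally symplectic means $\Omega$ is nondegenerate (automatic here, since $\Pi$ is nondegenerate and $\sharp_\Pi=\sharp_\Omega$), $d_\rho\theta=0$, and $d_\rho\Omega+\theta\wedge\Omega=0$. Since $\flat_\Omega$ is a vector bundle isomorphism, the two identities of Proposition \ref{Omega-theta-Pi-xi} may be evaluated on an arbitrary triple $(a,b,c)$, resp. pair $(a,b)$, of $A$-vector fields by taking $\alpha=\flat_\Omega(a)$, $\beta=\flat_\Omega(b)$, $\gamma=\flat_\Omega(c)$; hence the $A$-$3$-tensor $\tfrac{1}{2}[\Pi,\Pi]-\xi\wedge\Pi$ vanishes identically if and only if $d_\rho\Omega+\theta\wedge\Omega=0$, and $\mathcal{L}^\rho_\xi\Pi$ vanishes identically if and only if $\mathcal{L}^\rho_\xi\Omega=0$.

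The one point that is not immediate is that the closedness of $\theta$ matches the vanishing of $\mathcal{L}^\rho_\xi\Pi$. For this I would first record that $\theta=\flat_\Omega(\xi)=-i_\xi\Omega$, whence $\theta(\xi)=-\Omega(\xi,\xi)=0$. The Cartan formula \eqref{cartan} then gives
$$
\mathcal{L}^\rho_\xi\Omega=i_\xi(d_\rho\Omega)+d_\rho(i_\xi\Omega)=i_\xi(d_\rho\Omega)-d_\rho\theta .
$$
If in addition $d_\rho\Omega+\theta\wedge\Omega=0$, then $i_\xi(d_\rho\Omega)=-i_\xi(\theta\wedge\Omega)=-\theta(\xi)\,\Omega+\theta\wedge(i_\xi\Omega)=\theta\wedge(-\theta)=0$, so that under this hypothesis
$$
\mathcal{L}^\rho_\xi\Omega=-d_\rho\theta .
$$

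With these two facts the equivalence follows. If $(\Omega,\theta)$ is locally conformally symplectic, then $d_\rho\Omega+\theta\wedge\Omega=0$, so the first assertion of Proposition \ref{Omega-theta-Pi-xi} gives $[\Pi,\Pi]=2\xi\wedge\Pi$; moreover the displayed identity gives $\mathcal{L}^\rho_\xi\Omega=-d_\rho\theta=0$, hence $\mathcal{L}^\rho_\xi\Pi=0$ by the second assertion, and $(\Pi,\xi)$ is Jacobi. Conversely, if $(\Pi,\xi)$ is Jacobi, the two assertions of Proposition \ref{Omega-theta-Pi-xi} yield $d_\rho\Omega+\theta\wedge\Omega=0$ and $\mathcal{L}^\rho_\xi\Omega=0$; the displayed identity (applicable since $d_\rho\Omega+\theta\wedge\Omega=0$) then forces $d_\rho\theta=0$, and as $\Omega$ is nondegenerate, $(\Omega,\theta)$ is locally conformally symplectic.

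The only real obstacle is recognizing that the closedness of $\theta$ is controlled by neither part of Proposition \ref{Omega-theta-Pi-xi} directly, but must be extracted from $\mathcal{L}^\rho_\xi\Omega=0$ via the Cartan formula, \emph{after} the conformal-symplectic equation $d_\rho\Omega+\theta\wedge\Omega=0$ is already available; everything else is bookkeeping. One should also note that the Cartan formula and the expression $d_\rho\theta$ make perfect sense on a mere skew algebroid, so no Jacobi identity on $(A,\rho,[.,.])$ is needed.
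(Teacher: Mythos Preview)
Your proof is correct and follows essentially the same route as the paper: both arguments use the two assertions of Proposition \ref{Omega-theta-Pi-xi} to match $[\Pi,\Pi]=2\xi\wedge\Pi$ with $d_\rho\Omega+\theta\wedge\Omega=0$ and $\mathcal{L}^\rho_\xi\Pi=0$ with $\mathcal{L}^\rho_\xi\Omega=0$, and then invoke the Cartan formula (together with $i_\xi\Omega=-\theta$ and $\theta(\xi)=0$) to show that, once $d_\rho\Omega+\theta\wedge\Omega=0$ holds, $\mathcal{L}^\rho_\xi\Omega=-d_\rho\theta$. Your explicit remark that this last step requires the conformal-symplectic equation to be in hand first, and that no Jacobi identity on $A$ is needed, is accurate and matches the paper's computation.
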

\begin{proof}
From the first assertion of Proposition \ref{Omega-theta-Pi-xi} we
deduce that the identity $d_\rho\Omega+\theta \wedge \Omega =0$ is
satisfied if and only if  the identity $\left[ \Pi ,\Pi \right]
=2\xi \wedge \Pi$ is, and if one of the two is satisfied then, using
the Cartan formula, we get
$$
\mathcal{L}^\rho_\xi\Omega  =d_\rho(i_\xi \Omega)+i_\xi d_\rho\Omega =-d_\rho\theta -
i_\xi(\theta\wedge\Omega) =-d_\rho\theta,
$$
and then, with the assertion 2. of Proposition \ref{Omega-theta-Pi-xi},
that $\mathcal{L}^\rho_\xi\Pi=0$ if and only if $d_\rho\theta=0$.
\end{proof}

\begin{rem}\label{dual-locally-conf-sympl-Lie-alg}
Let $\left(A,\rho,[.,.],\Omega,\theta\right) $ be a locally
conformally symplectic skew (resp. almost Lie, Lie) algebroid and let $(\Pi,\xi)$ be the
associated Jacobi structure. Since we have
$\sharp_{\Pi,\xi}(\theta)=\sharp_\Pi(\theta)+\theta(\xi)\xi=\sharp_\Pi(\theta)=\xi$,
by Theorem \ref{alg-alt-jacobiO}, we have
$$
\sharp_{\Pi,\xi}\left(\left[\alpha,\beta\right]_{\Pi,\xi }^\theta\right)= \left[\sharp_{\Pi,\xi}(\alpha),\sharp_{\Pi,\xi}(\beta)\right].
$$
On the other hand, the vector bundle morphism $\sharp_{\Pi,\xi}$ is an ismorphism. Indeed, since we have $\sharp_{\Pi,\xi}(\alpha)=\sharp_\Pi(\alpha+\alpha(\xi)\xi)$ and since the bivector field $\Pi$ is nondegenerate, then $\sharp_{\Pi,\xi}(\alpha)=0$ implies $\alpha=-\alpha(\xi)\theta$, thus $\alpha(\xi)=-\alpha(\xi)\theta(\xi)=0$, and therefore $\alpha=0$. Hence, just as in the contact situation, Remark \ref{dual-contact-Lie-alg}(2), the skew algebroid  $\left(A^\star,\rho_{\Pi,\xi} ,\left[ .,.\right]_{\Pi,\xi}^\theta\right)$ is a skew (resp. almost Lie, Lie)  algebroid isomorphic to the skew (resp. almost Lie, Lie) algebroid $\left(A,\rho ,\left[ .,.\right]\right)$. If we put $\rho_{\Omega,\theta}=\rho_{\Pi,\xi}$ and $[.,.]_{\Omega,\theta}=[.,.]_{\Pi,\xi}^\theta$, the skew (resp. almost Lie, Lie) algebroid $(A^\star,\rho_{\Omega,\theta},[.,.]_{\Omega,\theta}^\theta)$ may be called the dual skew (resp. almost Lie, Lie) algebroid of the locally conformally symplectic skew (resp. almost Lie, Lie) algebroid $(A,\rho,[.,.],\Omega,\theta)$.
\end{rem}

\section{Compatibility of a Jacobi structure and a Riemannian metric}

\subsection{Compatibility of the triple $(\Pi,\xi,g)$}

Let $g$ be a pseudo-Riemannian metric on a skew algebroid
$\left(A,\rho,[.,.]\right)$. Let $\Pi$ be an $A$-bivector field and
$\xi$ be an $A$-vector field. With the triple $(\Pi,\xi,g)$ we
associate the $A$-$1$-form $\lambda$ defined by
\begin{equation*}
\lambda :=g(\xi,\xi) \flat_g(\xi)-\flat_g(J\xi),
\end{equation*}
and we use the notation $[.,.]_{\Pi,\xi}^g$ instead of $[.,.]_{\Pi
,\xi }^\lambda$. The Levi-Civita contravariant $A$-connection
associated with the triple $(\Pi,\xi,g)$ is the
Levi-Civita connection $\mathcal{D}$ of the pseudo-Riemannian skew
algebroid $(A^\star,\rho_{\Pi,\xi},[.,.]_{\Pi,\xi}^g)$. When
$\xi=0$, it is just the Levi-Civita contravariant connection
associated with the pair $(\Pi,g)$ in \S\,\ref{contravariant-connection-Pi-g}

\begin{prop}\label{Levi-Civita-triplet}
Assume that the vector bundle morphism $\sharp_{\Pi,\xi}$ is a skew
algebroid morphism, i.e. we have
$$
\sharp_{\Pi,\xi}\left([\alpha,\beta]_{\Pi,\xi}^{g}\right)=\left[\sharp_{\Pi,\xi}(\alpha)
,\sharp_{\Pi,\xi}(\beta) \right].
$$
Assume also that $\sharp_{\Pi,\xi}$ is an isometry. Then
\begin{equation*}
\sharp_{\Pi,\xi}\left( \mathcal{D}_\alpha \beta \right)=\nabla_{\sharp_{\Pi,\xi}(\alpha)}\left(\sharp_{\Pi,\xi}(\beta)\right) ,
\end{equation*}
where $\nabla$ is the Levi-Civita connection of $(A,\rho,[.,.],g) $.
\end{prop}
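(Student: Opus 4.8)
The statement asserts that under the hypotheses that $\sharp_{\Pi,\xi}$ is both a skew algebroid isomorphism and an isometry between $(A^\star,\rho_{\Pi,\xi},[.,.]_{\Pi,\xi}^g,g^\star)$ and $(A,\rho,[.,.],g)$, the Levi-Civita connection $\mathcal{D}$ of the first is intertwined with the Levi-Civita connection $\nabla$ of the second. This is the naturality of the Levi-Civita connection under isomorphisms of pseudo-Riemannian skew algebroids, and the natural route is the Koszul formula (\ref{koszul}), since $\nabla$ is \emph{entirely characterized} by it.

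\textbf{The approach.} Define a candidate contravariant connection $D'$ on $A^\star$ by transporting $\nabla$ through $\sharp_{\Pi,\xi}$, i.e.
$$
D'_\alpha\beta := \sharp_{\Pi,\xi}^{-1}\!\left(\nabla_{\sharp_{\Pi,\xi}(\alpha)}\bigl(\sharp_{\Pi,\xi}(\beta)\bigr)\right),
$$
which makes sense because $\sharp_{\Pi,\xi}$ is a vector bundle isomorphism (this is legitimate by the discussion in \S\ref{contravariant-connection-Pi-g} applied with the isomorphism $\sharp_{\Pi,\xi}$ in place of $\sharp_\Pi$). First I would check that $D'$ is an affine $A$-connection on $(A^\star,\rho_{\Pi,\xi},[.,.]_{\Pi,\xi}^g)$: $\mathbb{R}$-bilinearity is clear, $C^\infty(M)$-linearity in $\alpha$ follows from that of $\nabla$ together with $\sharp_{\Pi,\xi}$ being $C^\infty(M)$-linear, and the Leibniz rule in $\beta$ uses in addition that $\rho_{\Pi,\xi}=\rho\circ\sharp_{\Pi,\xi}$, so that $\rho_{\Pi,\xi}(\alpha)\cdot\varphi = \rho(\sharp_{\Pi,\xi}(\alpha))\cdot\varphi$.

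\textbf{Key steps.} Next I would verify the two characterizing properties of the Levi-Civita connection for $D'$ on the pseudo-Riemannian skew algebroid $(A^\star,\rho_{\Pi,\xi},[.,.]_{\Pi,\xi}^g,g^\star)$. For torsion-freeness: since $\sharp_{\Pi,\xi}$ is a skew algebroid morphism by hypothesis, $\sharp_{\Pi,\xi}([\alpha,\beta]_{\Pi,\xi}^g)=[\sharp_{\Pi,\xi}(\alpha),\sharp_{\Pi,\xi}(\beta)]$, so applying $\sharp_{\Pi,\xi}^{-1}$ to the torsion-freeness of $\nabla$, $\nabla_{\sharp_{\Pi,\xi}(\alpha)}\sharp_{\Pi,\xi}(\beta)-\nabla_{\sharp_{\Pi,\xi}(\beta)}\sharp_{\Pi,\xi}(\alpha)=[\sharp_{\Pi,\xi}(\alpha),\sharp_{\Pi,\xi}(\beta)]$, yields $D'_\alpha\beta-D'_\beta\alpha=[\alpha,\beta]_{\Pi,\xi}^g$. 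For metric compatibility: using that $\sharp_{\Pi,\xi}$ is an isometry, $g^\star(\alpha,\beta)=g(\sharp_{\Pi,\xi}(\alpha),\sharp_{\Pi,\xi}(\beta))$, so
$$
\rho_{\Pi,\xi}(\alpha)\cdot g^\star(\beta,\gamma)=\rho(\sharp_{\Pi,\xi}(\alpha))\cdot g\bigl(\sharp_{\Pi,\xi}(\beta),\sharp_{\Pi,\xi}(\gamma)\bigr),
$$
and compatibility of $\nabla$ with $g$ plus the definition of $D'$ and the isometry property give $\rho_{\Pi,\xi}(\alpha)\cdot g^\star(\beta,\gamma)=g^\star(D'_\alpha\beta,\gamma)+g^\star(\beta,D'_\alpha\gamma)$. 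By the uniqueness of the Levi-Civita connection for a pseudo-Riemannian skew algebroid (the statement preceding (\ref{koszul})), $D'=\mathcal{D}$, which is exactly the claimed identity $\sharp_{\Pi,\xi}(\mathcal{D}_\alpha\beta)=\nabla_{\sharp_{\Pi,\xi}(\alpha)}(\sharp_{\Pi,\xi}(\beta))$.

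\textbf{Main obstacle.} The computations above are each short and routine; the one point deserving care is confirming that the Levi-Civita characterization and its uniqueness are available for a \emph{skew} algebroid (not just an almost Lie or Lie algebroid) — but this is precisely what the paragraph around the Koszul formula (\ref{koszul}) asserts, so no issue arises. A slightly more delicate bookkeeping point is that in the metric-compatibility check the term $g^\star(D'_\alpha\beta,\gamma)$ must be unwound as $g(\sharp_{\Pi,\xi}(D'_\alpha\beta),\sharp_{\Pi,\xi}(\gamma)) = g(\nabla_{\sharp_{\Pi,\xi}(\alpha)}\sharp_{\Pi,\xi}(\beta),\sharp_{\Pi,\xi}(\gamma))$ using the isometry \emph{twice} in effect (once to define $g^\star$ on the first slot after transporting, consistently with the definition of $D'$); writing everything in terms of sections $a=\sharp_{\Pi,\xi}(\alpha)$, $b=\sharp_{\Pi,\xi}(\beta)$, $c=\sharp_{\Pi,\xi}(\gamma)$ of $A$ from the outset makes all of this transparent and is the cleanest way to organize the argument. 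Alternatively, one can bypass $D'$ and directly compare the two Koszul formulas (\ref{koszul}) — the one for $\mathcal{D}$ on $A^\star$ and the one for $\nabla$ on $A$ — term by term after the substitution $a=\sharp_{\Pi,\xi}(\alpha)$ etc., each of the six terms matching because of the morphism property (for the bracket terms) and the isometry property (for the $\rho$-derivative terms); I would present whichever version is shorter, but both reduce to the same two hypotheses.
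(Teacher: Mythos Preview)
Your proposal is correct and essentially matches the paper's argument. The paper's proof is the ``alternative'' you mention at the end: it simply observes that the isometry gives $g(\sharp_{\Pi,\xi}(\mathcal{D}_\alpha\beta),\sharp_{\Pi,\xi}(\gamma))=g^\star(\mathcal{D}_\alpha\beta,\gamma)$ and then compares the two Koszul formulas term by term, whereas your primary route packages the same two hypotheses into torsion-freeness and metric-compatibility of the transported connection and invokes uniqueness --- but these are two presentations of the same naturality argument.
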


\begin{proof}
Since $\sharp_{\Pi,\xi}$ is an isometry we have $g(\sharp_{\Pi,\xi}(\mathcal{D}_\alpha \beta),\sharp_{\Pi,\xi}(\gamma))=g^\star(\mathcal{D}_\alpha \beta,\gamma)$. Now, using the Koszul formula (\ref{koszul}) for the Levi-Civita connections $\mathcal{D}$ and $\nabla$, it follows that
$$
g\left(\sharp_{\Pi,\xi}\left(\mathcal{D}_{\alpha}\beta \right)
,\sharp_{\Pi,\xi}(\gamma) \right) =g\left( \nabla_{\sharp_{\Pi,\xi}(\alpha)}\left(\sharp_{\Pi,\xi}(\beta)\right),\sharp_{\Pi,\xi}(\gamma)\right),
$$
for any $\alpha ,\beta ,\gamma\in\Gamma(A^\star)$.
\end{proof}

We say that the metric $g$ is compatible with the pair
$\left(\Pi,\xi\right) $ or that the triple $\left(\Pi,\xi,g\right)$
is compatible if
\begin{equation}\label{compatibilte du triplet}
\mathcal{D}\Pi(\alpha,\beta,\gamma) =\frac{1}{2}\left(\gamma(\xi)
\Pi(\alpha,\beta) -\beta(\xi)\Pi(\alpha,\gamma)-J^{\star}\gamma(\xi)
g^{\star}(\alpha,\beta)+J^{\star}\beta(\xi)
g^{\star}(\alpha,\gamma)\right),
\end{equation}
for every $\alpha ,\beta ,\gamma \in \Gamma(A^{\star})$. The formula
(\ref{compatibilte du triplet}) can also be written in the form
\begin{equation} \label{compatibilite du triplet 2}
\left(\mathcal{D}_{\alpha }J^\star\right) \beta
=\frac{1}{2}\left(\Pi(\alpha ,\beta) \flat_g(\xi) -\beta(\xi)
J^{\star}\alpha -g^{\star}(\alpha,\beta) J^{\star}\flat_g(\xi)
+J^{\star}\beta(\xi) \alpha\right),
\end{equation}
for every $\alpha ,\beta \in \Gamma(A^\star)$.

The compatibility in the case $\xi$ is the zero $A$-vector field means
that  $\left(A,\rho,[.,.],\Pi,g\right)$ is a pseudo-Riemannian
Poisson skew algebroid, and Riemannian Poisson if moreover the metric
$g$ is positive definite.

The following result generalizes the proposition \ref{bivector-compatibility-Poisson}.

\begin{thm}
Let $\left(A,\rho,[ .,.] ,g\right)$ be a pseudo-Riemannian skew
algebroid, $\Pi$ an $A$-bivector field and $\xi$ an $A$-vector field. Assume that $\mathcal{L}^\rho_\xi\Pi=0$  and $(\Pi,\xi,g)$ is compatible. Then, $(\Pi,\xi)$ is a Jacobi structure if and only if $(\xi-\sharp_\Pi(\lambda))\wedge \Pi=0$.    
\end{thm}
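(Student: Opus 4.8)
The plan is to compute the cyclic sum $\oint\mathcal{D}\Pi(\alpha,\beta,\gamma)$ in two ways: once from the structure of the skew algebroid $(A^\star,\rho_{\Pi,\xi},[.,.]_{\Pi,\xi}^g)$ together with the hypothesis $\mathcal{L}^\rho_\xi\Pi=0$, and once from the compatibility condition (\ref{compatibilte du triplet}); equating the two will give $[\Pi,\Pi]=(\xi+\sharp_\Pi(\lambda))\wedge\Pi$. Since the hypothesis already contains $\mathcal{L}^\rho_\xi\Pi=0$, the pair $(\Pi,\xi)$ is a Jacobi structure precisely when $[\Pi,\Pi]=2\,\xi\wedge\Pi$, and the identity just stated shows this is equivalent to $(\xi+\sharp_\Pi(\lambda))\wedge\Pi=2\,\xi\wedge\Pi$, i.e. to $(\xi-\sharp_\Pi(\lambda))\wedge\Pi=0$. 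So the whole theorem reduces to that one identity.

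First I would establish, using only that $\mathcal{D}$ is torsion free for $(A^\star,\rho_{\Pi,\xi},[.,.]_{\Pi,\xi}^g)$ and that $\mathcal{L}^\rho_\xi\Pi=0$ (compatibility not yet used), the identity
$$\oint\mathcal{D}\Pi(\alpha,\beta,\gamma)=-[\Pi,\Pi](\alpha,\beta,\gamma)+2\,(\xi\wedge\Pi)(\alpha,\beta,\gamma)+(\sharp_\Pi(\lambda)\wedge\Pi)(\alpha,\beta,\gamma).$$
Writing $(\mathcal{D}_\alpha\Pi)(\beta,\gamma)=\rho_{\Pi,\xi}(\alpha)\cdot\Pi(\beta,\gamma)-\Pi(\mathcal{D}_\alpha\beta,\gamma)-\Pi(\beta,\mathcal{D}_\alpha\gamma)$ and using antisymmetry of $\Pi$ together with $\mathcal{D}_\alpha\beta-\mathcal{D}_\beta\alpha=[\alpha,\beta]_{\Pi,\xi}^g$, the standard rearrangement of the cyclic sum yields $\oint\mathcal{D}\Pi=\oint\rho_{\Pi,\xi}(\alpha)\cdot\Pi(\beta,\gamma)-\oint\Pi([\alpha,\beta]_{\Pi,\xi}^g,\gamma)$. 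I would then substitute (\ref{rhoPixi}) and (\ref{bracketPixi}) with $\lambda=g(\xi,\xi)\flat_g(\xi)-\flat_g(J\xi)$: the part coming from $\rho_\Pi$ and $[.,.]_\Pi$ is exactly $-[\Pi,\Pi]$ by (\ref{CrochetPiPi}); the $\lambda$-term gives $\oint\Pi(\alpha,\beta)\Pi(\lambda,\gamma)=\sharp_\Pi(\lambda)\wedge\Pi$ since $\Pi(\lambda,\gamma)=\gamma(\sharp_\Pi(\lambda))$; and for the remaining $\xi$-terms I would rewrite the hypothesis as $\rho(\xi)\cdot\Pi(\beta,\gamma)-\Pi(\mathcal{L}^\rho_\xi\beta,\gamma)=\Pi(\beta,\mathcal{L}^\rho_\xi\gamma)$, which makes all $\mathcal{L}^\rho_\xi$-terms cancel under the cyclic sum and leaves $\oint\alpha(\xi)\Pi(\beta,\gamma)-\oint\beta(\xi)\Pi(\alpha,\gamma)=2\,\xi\wedge\Pi$.

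Next I would bring in compatibility: taking the cyclic sum of the right-hand side of (\ref{compatibilte du triplet}), the first two terms give $\oint(\gamma(\xi)\Pi(\alpha,\beta)-\beta(\xi)\Pi(\alpha,\gamma))=2\,\xi\wedge\Pi$, while the last two cancel because $g^\star$ is symmetric, so a cyclic relabelling shows $\oint(J^\star\gamma)(\xi)\,g^\star(\alpha,\beta)=\oint(J^\star\beta)(\xi)\,g^\star(\alpha,\gamma)$. Hence $\oint\mathcal{D}\Pi=\tfrac12\cdot 2\,\xi\wedge\Pi=\xi\wedge\Pi$. Comparing with the identity of the previous paragraph gives $\xi\wedge\Pi=-[\Pi,\Pi]+2\,\xi\wedge\Pi+\sharp_\Pi(\lambda)\wedge\Pi$, that is $[\Pi,\Pi]=(\xi+\sharp_\Pi(\lambda))\wedge\Pi$, and the conclusion then follows as in the first paragraph.

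The main obstacle is the first identity: it is a moderately long bookkeeping of cyclic sums, and the two delicate points are (i) getting the signs right so that the $\rho_\Pi$/$[.,.]_\Pi$ contribution is exactly $-[\Pi,\Pi]$ through (\ref{CrochetPiPi}), and (ii) checking that every $\mathcal{L}^\rho_\xi$-term disappears under the cyclic sum — this cancellation is the only place where the hypothesis $\mathcal{L}^\rho_\xi\Pi=0$ enters, and it is not needed for the compatibility step. Everything after that identity is short.
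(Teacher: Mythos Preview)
Your proof is correct and follows essentially the same route as the paper: compute $\oint\mathcal{D}\Pi$ once from the compatibility condition to get $\xi\wedge\Pi$, once from torsion-freeness together with (\ref{CrochetPiPi}), (\ref{rhoPixi}), (\ref{bracketPixi}) and $\mathcal{L}^\rho_\xi\Pi=0$ to get $-[\Pi,\Pi]+2\,\xi\wedge\Pi+\sharp_\Pi(\lambda)\wedge\Pi$, and equate. The only cosmetic difference is that the paper packages the $\xi$-terms as $\oint\alpha(\xi)\,\mathcal{L}^\rho_\xi\Pi(\beta,\gamma)$ before invoking the hypothesis, whereas you spell out the cancellation of the $\mathcal{L}^\rho_\xi$-contributions directly; the resulting identity $2\,\xi\wedge\Pi-[\Pi,\Pi]=(\xi-\sharp_\Pi(\lambda))\wedge\Pi$ is the same.
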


\begin{proof} 
Taking the cyclic sum on the two sides of the equality (\ref{compatibilte du triplet}) we get 
\begin{equation}\label{cyclic-compatibility}
\oint \mathcal{D}\Pi(\alpha,\beta,\gamma)=\xi \wedge \Pi(\alpha,\beta,\gamma). 
\end{equation}
On the other hand, we have  
$$
\begin{array}{ll}
\mathcal{D}\Pi(\alpha,\beta,\gamma) & =\rho_{\Pi,\xi}(\alpha)\cdot \Pi(\beta,\gamma)-\Pi(\mathcal{D}_\alpha \beta,\gamma)-\Pi(\beta,\mathcal{D}_\alpha \gamma) \\ 
 & =\rho_{\Pi}(\alpha)\cdot \Pi(\beta,\gamma)-\Pi(\mathcal{D}_\alpha \beta,\gamma)+\Pi(\mathcal{D}_\alpha \gamma,\beta)+\alpha(\xi)(\rho(\xi)\cdot\Pi(\beta,\gamma).  
\end{array} 
$$
Hence, the connection $\mathcal{D}$ being torsion free, 
$$
\oint \mathcal{D}\Pi(\alpha,\beta,\gamma)=\oint\left(\rho_{\Pi}(\alpha)\cdot \Pi(\beta,\gamma)-\Pi([\alpha,\beta]^\lambda_{\Pi,\xi},\gamma)+\alpha(\xi)(\rho(\xi)\cdot\Pi(\beta,\gamma)) \right). 
$$
Since  
$$
\begin{array}{ll}
\Pi([\alpha,\beta]^\lambda_{\Pi,\xi},\gamma) = & \Pi([\alpha,\beta]_\Pi,\gamma)+\alpha(\xi)\Pi(\mathcal{L}^\rho_\xi\beta,\gamma)+\beta(\xi)\Pi(\gamma,\mathcal{L}^\rho_\xi \alpha) \\  
& - \alpha(\xi)\Pi(\beta,\gamma)-\beta(\xi)\Pi(\gamma,\alpha)-\Pi(\alpha,\beta)\Pi(\lambda,\gamma), 
\end{array}
$$
it follows using (\ref{CrochetPiPi}) that
$$
\oint \mathcal{D}\Pi(\alpha,\beta,\gamma)=\left(2\xi \wedge \Pi - [\Pi,\Pi]\right)(\alpha,\beta,\gamma)+\left(\sharp_\Pi(\lambda)\wedge\Pi\right)(\alpha,\beta,\gamma)+\oint \alpha(\xi)\mathcal{L}^\rho_\xi\Pi(\beta,\gamma), 
$$ 
and, by (\ref{cyclic-compatibility}) and since $\mathcal{L}^\rho_\xi\Pi=0$, that $2\xi\wedge\Pi-[\Pi,\Pi]=\left(\xi-\sharp_\Pi(\lambda)\right)\wedge \Pi$. 
\end{proof}

\subsection{$(1/2)$-Kenmostsu Lie algebroids}

For the notions of almost contact pseudo-Riemannian structures,
contact pseudo-Riemannian structures and Kenmotsu structures on a
smooth manifold, see \cite{blair}. For the definition of these same
structures on a Lie algebroid, see \cite{ida-popescu2}.

\subsubsection{Almost contact and contact Riemannian Lie algebroids}
Let $(A,\rho,[.,.])$ be a skew algebroid. Let $\left(\Phi ,\xi ,\eta
\right)$ be a triple consisting of an $A$-1-form $\eta$, an
$A$-vector field $\xi$ and an $A$-(1,1)-tensor field $\Phi$. The
triple $\left(\Phi ,\xi ,\eta \right)$ is an almost contact
structure on $A$ if
$\Phi^2=-\textrm{Id}_A+\eta\otimes\xi$ and $\eta(\xi)=1$. From what
it follows, in a similar manner as in the case of almost contact
manifolds \cite[Th.4.1]{blair}, that $\Phi(\xi) =0$ and $\eta \circ
\Phi =0$.

We say that a metric $g$ is associated with the triple
$(\Phi,\xi,\eta)$ if the following identity is verified
\begin{equation}\label{met-ass-presque-cont}
g(\Phi(a),\Phi(b))=g(a,b)-\eta(a)\eta(b) .
\end{equation}
Let $(A,\rho,[.,.])$ be a skew (resp. almost Lie, Lie) algebroid. We say that $\left(A,\rho,[ .,.],\Phi ,\xi ,\eta ,g\right) $ is an
almost contact (pseudo-)Riemannian skew (resp. almost Lie, Lie) algebroid if the triple
$\left( \Phi ,\xi ,\eta \right) $ is an almost contact structure on
$A$ and $g$ is an associated (pseudo-)Riemannian metric.

Let $\left(A,\rho ,[ .,.] ,\Phi ,\xi ,\eta ,g\right)$ be an almost
contact pseudo-Riemannian skew algebroid. Notice that if we set
$b=\xi$ in (\ref{met-ass-presque-cont}), we deduce that
\begin{equation*}
g(a,\xi) =\eta(a),
\end{equation*}
for any $a\in \Gamma(A)$, i.e., $\flat_g(\xi)=\eta$, and in
particular that $g(\xi,\xi) =1$. Also, using
(\ref{met-ass-presque-cont}), $\Phi^2=-\textrm{Id}_A+\eta\otimes\xi$
and $\eta \circ \Phi =0$, we get that $g(\Phi(a),b)=-g(a,\Phi(b))$,
for any $a,b\in \Gamma(A)$. So, the map $\Pi:\Gamma(A^\star)\times
\Gamma(A^\star)\rightarrow C^\infty(M)$ defined by
$$
\Pi(\alpha,\beta)=g(\sharp_g(\alpha),\Phi(\sharp_g(\beta)))
$$
is a bivector field on $A$. Let us call it the fundamental bivector
field of the almost contact pseudo-Riemannian skew algebroid
$\left(A,\rho ,[ .,.] ,\Phi ,\xi ,\eta ,g\right)$. We have the
following result

\begin{prop}\label{pi-ass-presque-contact}
The vector bundle morphism $\sharp_{\Pi,\xi}$ is an isometry.
Therefore, if in addition $\sharp_{\Pi,\xi}$ is a skew algebroid
morphism, then
\begin{equation*}
\sharp_{\Pi,\xi}\left( \mathcal{D}_{\alpha }\beta \right)
=\mathcal{\nabla}_{\sharp_{\Pi,\xi}(\alpha)}\left(\sharp_{\Pi,\xi}(\beta)\right),
\end{equation*}
for every $\alpha ,\beta \in \Gamma(A^\star)$.
\end{prop}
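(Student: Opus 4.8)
The plan is to observe that the displayed identity is exactly the conclusion of Proposition~\ref{Levi-Civita-triplet}, applied with $\lambda$ the $A$-$1$-form associated with the triple $(\Pi,\xi,g)$ (so that $[.,.]^{g}_{\Pi,\xi}=[.,.]^{\lambda}_{\Pi,\xi}$), under the two hypotheses that $\sharp_{\Pi,\xi}$ is a skew algebroid morphism (assumed in the statement) and that $\sharp_{\Pi,\xi}$ is an isometry from $(A^{\star},g^{\star})$ to $(A,g)$. Hence the only thing to prove is the first assertion, namely that $\sharp_{\Pi,\xi}$ is an isometry.

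First I would obtain an explicit formula for $\sharp_{\Pi,\xi}$ in terms of $\Phi$, $\eta$ and $\sharp_g$. Writing $a=\sharp_g(\alpha)$, so that $\alpha=g(a,\cdot)$, the defining relations $\beta\bigl(\sharp_\Pi(\alpha)\bigr)=\Pi(\alpha,\beta)=g\bigl(a,\Phi(\sharp_g(\beta))\bigr)$ together with the skew-adjointness $g(\Phi(a),b)=-g(a,\Phi(b))$ established in the paragraph preceding the statement give $g\bigl(\sharp_g(\beta),\sharp_\Pi(\alpha)\bigr)=-g\bigl(\sharp_g(\beta),\Phi(a)\bigr)$ for every $\beta$, whence $\sharp_\Pi(\alpha)=-\Phi(\sharp_g(\alpha))$. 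Since $\flat_g(\xi)=\eta$, one has $\alpha(\xi)=g(a,\xi)=\eta(a)$, and therefore
$$
\sharp_{\Pi,\xi}(\alpha)=\sharp_\Pi(\alpha)+\alpha(\xi)\,\xi=-\Phi(\sharp_g(\alpha))+\eta(\sharp_g(\alpha))\,\xi .
$$
Then I would simply expand $g\bigl(\sharp_{\Pi,\xi}(\alpha),\sharp_{\Pi,\xi}(\beta)\bigr)$ using this formula, with $a=\sharp_g(\alpha)$, $b=\sharp_g(\beta)$. Bilinearity produces four terms; the two mixed terms vanish because $g(\Phi(a),\xi)=-g(a,\Phi(\xi))=0$ (as $\Phi(\xi)=0$), and the remaining terms are $g(\Phi(a),\Phi(b))+\eta(a)\eta(b)\,g(\xi,\xi)=g(\Phi(a),\Phi(b))+\eta(a)\eta(b)$, using $g(\xi,\xi)=1$. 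By the associated-metric identity~(\ref{met-ass-presque-cont}), $g(\Phi(a),\Phi(b))=g(a,b)-\eta(a)\eta(b)$, so this sum is $g(a,b)=g^{\star}(\alpha,\beta)$, proving that $\sharp_{\Pi,\xi}$ is an isometry.

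Finally, the second assertion follows at once from Proposition~\ref{Levi-Civita-triplet}. I do not expect any genuine obstacle: the argument is a short tensor computation, and the only points requiring care are the sign in $\sharp_\Pi(\alpha)=-\Phi(\sharp_g(\alpha))$ and the fact that the three ingredients $\Phi(\xi)=0$, $g(\xi,\xi)=1$ and $\flat_g(\xi)=\eta$ must be cited from the paragraph immediately preceding the statement rather than re-established here.
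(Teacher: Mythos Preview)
Your proposal is correct and follows essentially the same route as the paper: derive the explicit formula $\sharp_{\Pi,\xi}(\alpha)=-\Phi(\sharp_g(\alpha))+\eta(\sharp_g(\alpha))\xi$, expand $g(\sharp_{\Pi,\xi}(\alpha),\sharp_{\Pi,\xi}(\beta))$ using $g(\Phi(a),\xi)=0$, $g(\xi,\xi)=1$ and the associated-metric identity~(\ref{met-ass-presque-cont}), then invoke Proposition~\ref{Levi-Civita-triplet} for the second claim. The only cosmetic difference is that the paper kills the cross terms via $g(\Phi(a),\xi)=\eta(\Phi(a))=0$ (using $\eta\circ\Phi=0$) rather than via $\Phi(\xi)=0$ and skew-adjointness as you do, which is an equivalent justification.
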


\begin{proof}
Let us prove that $\sharp_{\Pi,\xi}$ is an isometry. Let $\alpha\in
\Gamma(A^\star)$. Recall that by definition, we have
$\sharp_{\Pi,\xi}(\alpha)=\sharp_\Pi(\alpha)+\alpha(\xi)\xi$. Since
we have on one hand
$\alpha(\xi)=g(\sharp_g(\alpha),\xi)=\eta(\sharp_g(\alpha))$ and on
the other hand, for any $\beta\in \Gamma(A^\star)$,
$$
\beta(\sharp_\Pi(\alpha))=g(\sharp_g(\alpha),\Phi(\sharp_g(\beta)))
=-g(\Phi(\sharp_g(\alpha)),\sharp_g(\beta))=-\beta(\Phi(\sharp_g(\alpha))),
 $$
i.e. $\sharp_\Pi(\alpha)=-\Phi(\sharp_g(\alpha))$, we deduce that
\begin{equation}\label{f1}
\sharp_{\Pi,\xi}(\alpha)=-\Phi(\sharp_g(\alpha))+\eta(\sharp_g(\alpha))\xi.
\end{equation}
Let $\alpha,\beta\in \Gamma(A^\star)$. From the formula (\ref{f1})
and the fact that $g(\Phi(a),\xi)=\eta\circ\Phi(a)=0$ and
$g(\xi,\xi)=1$, we deduce that
$$
g(\sharp_{\Pi,\xi}(\alpha),\sharp_{\Pi,\xi}(\beta))
=g(\Phi(\sharp_g(\alpha)),\Phi(\sharp_g(\beta)))
+\eta(\sharp_g(\alpha))\eta(\sharp_g(\beta)).
$$
By using the formula (\ref{met-ass-presque-cont}), we get
$$
g(\sharp_{\Pi,\xi}(\alpha),\sharp_{\Pi,\xi}(\beta))
=g(\sharp_g(\alpha),\sharp_g(\beta))=g^\ast(\alpha,\beta).
$$
The second claim  is a direct consequence of Proposition
\ref{Levi-Civita-triplet}.
\end{proof}

Let $(A,\rho,[.,.])$ be a skew (resp. almost Lie, Lie) algebroid. Assume that $\eta$ is a contact form on 
$(A,\rho,[.,.])$. We say that $(A,\rho,[.,.],\eta,g)$ is a contact
pseudo-Riemannian skew (resp. almost Lie, Lie) algebroid, or that the metric $g$ is
associated with the contact form $\eta$, if there exists a vector
bundle endomorphism $\Phi$ of $A$ such that $(\Phi,\xi,\eta,g)$ is an
almost contact pseudo-Riemannian structure on $A$ and that
\begin{equation}\label{met-ass-cont}
g(a,\Phi(b))=d_\rho\eta(a,b).
\end{equation}
If in addition $g$ is positive definite, we say that
$(A,\rho,[.,.],\eta,g)$ is a contact Riemannian skew (resp. almost Lie, Lie) algebroid.

\begin{thm}\label{riemannienne-contact-levi-civita}
Assume that $(A,\rho,[.,.],\eta,g)$ is a contact pseudo-Riemannian
Lie algebroid. We have
\begin{equation*}
\sharp_{\eta}\left( \mathcal{D}_{\alpha }\beta \right)
=\mathcal{\nabla}_{\sharp_{\eta}(\alpha)}\left(\sharp_{\eta}(\beta)\right),
\end{equation*}
for every $\alpha ,\beta \in \Gamma(A^\star)$.
\end{thm}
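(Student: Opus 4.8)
The plan is to deduce the statement from Proposition \ref{Levi-Civita-triplet}: it suffices to check that the vector bundle morphism $\sharp_{\Pi,\xi}$ attached to the Jacobi structure $(\Pi,\xi)$ of the contact form $\eta$ is simultaneously an isometry and a skew algebroid morphism for the bracket $[.,.]_{\Pi,\xi}^g$. Once this is done, Proposition \ref{Levi-Civita-triplet} gives $\sharp_{\Pi,\xi}(\mathcal{D}_\alpha\beta)=\nabla_{\sharp_{\Pi,\xi}(\alpha)}(\sharp_{\Pi,\xi}(\beta))$; and since a contact structure is precisely the almost cosymplectic structure $(\Omega,\eta)$ with $\Omega=d_\rho\eta$, Lemma \ref{sharp-Pixi-Omegaeta} identifies $\sharp_{\Pi,\xi}$ with $\sharp_\eta=\sharp_{\Omega,\eta}$, which is exactly the conclusion.

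First I would unwind the contact pseudo-Riemannian hypothesis: by definition there is an endomorphism $\Phi$ of $A$ with $(\Phi,\xi,\eta,g)$ an almost contact pseudo-Riemannian structure and $g(a,\Phi(b))=d_\rho\eta(a,b)$. Using $\Phi^2=-\mathrm{Id}_A+\eta\otimes\xi$, $\eta\circ\Phi=0$ and (\ref{met-ass-presque-cont}) one gets the skew-symmetry $g(\Phi(a),b)=-g(a,\Phi(b))$, hence $\flat_{\Omega,\eta}(a)=-i_a\Omega+\eta(a)\eta=g(\Phi(a),\cdot)+\eta(a)\eta=\flat_g\bigl(\Phi(a)+\eta(a)\xi\bigr)$. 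On the one hand this shows that the fundamental bivector field of the almost contact pseudo-Riemannian structure $(\Phi,\xi,\eta,g)$ coincides with the fundamental $A$-bivector field $\Pi$ of the contact structure (both satisfy $\sharp_\Pi(\alpha)=-\Phi(\sharp_g(\alpha))$), so Proposition \ref{pi-ass-presque-contact} applies and yields that $\sharp_{\Pi,\xi}=\sharp_\eta$ is an isometry. On the other hand, setting $a=\xi$ gives $\flat_{\Omega,\eta}(\xi)=\eta$, i.e. $\sharp_{\Pi,\xi}(\eta)=\xi$.

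It remains to establish the skew algebroid morphism property for $[.,.]_{\Pi,\xi}^g=[.,.]_{\Pi,\xi}^\lambda$ with $\lambda=g(\xi,\xi)\flat_g(\xi)-\flat_g(J\xi)$. From (\ref{J et J star}) one computes $J=\sharp_\Pi\circ\flat_g$, so by the previous paragraph $J=-\Phi$ and $J\xi=-\Phi(\xi)=0$; combined with $g(\xi,\xi)=1$ and $\flat_g(\xi)=\eta$ (both consequences of (\ref{met-ass-presque-cont})), this gives $\lambda=\eta$. Then Theorem \ref{alg-alt-jacobiO} applied with $\lambda=\eta$, together with $\sharp_{\Pi,\xi}(\eta)=\xi$, forces $\sharp_{\Pi,\xi}\bigl([\alpha,\beta]_{\Pi,\xi}^g\bigr)=[\sharp_{\Pi,\xi}(\alpha),\sharp_{\Pi,\xi}(\beta)]$ — this is exactly the content of Remark \ref{dual-contact-Lie-alg}(2). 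With both hypotheses of Proposition \ref{Levi-Civita-triplet} now verified, the statement follows at once.

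The main obstacle is not conceptual but bookkeeping of sign conventions: correctly matching $\flat_{\Omega,\eta}$ (built from $d_\rho\eta$) against $\flat_g\circ(\Phi+\eta\otimes\xi)$, identifying the two fundamental bivector fields so that Proposition \ref{pi-ass-presque-contact} is applicable, and checking $J=-\Phi$ so that the auxiliary $1$-form $\lambda$ collapses to the contact form $\eta$. Once these identifications are in place, the result is a formal consequence of Proposition \ref{Levi-Civita-triplet}, Theorem \ref{alg-alt-jacobiO} and Lemma \ref{sharp-Pixi-Omegaeta}.
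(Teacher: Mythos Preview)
Your proposal is correct and follows essentially the same route as the paper: show that the contact $\Pi$ coincides with the fundamental bivector of $(\Phi,\xi,\eta,g)$ so that Proposition \ref{pi-ass-presque-contact} yields the isometry, and then check $J=-\Phi$, hence $\lambda=\eta$, so that Remark \ref{dual-contact-Lie-alg}(2) (via Theorem \ref{alg-alt-jacobiO}) supplies the skew algebroid morphism, after which Proposition \ref{Levi-Civita-triplet} concludes. The paper's proof is organized slightly differently---it invokes Proposition \ref{pi-ass-presque-contact} directly for the final step rather than going back to Proposition \ref{Levi-Civita-triplet}---but the substance is identical.
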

\begin{proof}
Let $(\Pi,\xi)$ be the Jacobi structure associated
with the contact form $\eta$, then $\sharp_\eta=\sharp_{\Pi,\xi}$. Let $(\Phi,\xi,\eta,g)$ be the almost contact pseudo-Riemannian
structure associated with the contact pseudo-Riemannian structure
$(\eta,g)$ on $A$.  By Remark \ref{dual-contact-Lie-alg}(2)
and the proposition above, we need only to prove that
$\Pi$ is the fundamental bivector field associated with $(\Phi,\xi,\eta,g)$ and that
$\eta=\lambda$. Let $\alpha \in \Gamma(A^\star)$ and put
$a=\sharp_\eta(\alpha)$. By using (\ref{met-ass-cont}), we have
$$
\sharp_g(\alpha)=\sharp_g(\flat_\eta(a))=-\sharp_g(i_a
d\eta)+\eta(a)\xi = \Phi(a)+\eta(a)\xi.
$$
Therefore, applying $\Phi$,
$$
\Phi(\sharp_g(\alpha))=\Phi^2(a)=-a+\eta(a)\xi=-\sharp_\eta(\alpha)+\alpha(\xi)\xi=-\sharp_\Pi(\alpha).
$$
We deduce that
$\Pi(\alpha,\beta)=g(\sharp_g(\alpha),\Phi(\sharp_g(\beta)))$ for
any $\alpha,\beta\in \Gamma(A^\star)$, and hence, that $\Phi=-J$. Therefore, $J\xi=0$, and since $g(\xi,\xi)=1$ it follows that $\lambda=\flat_g(\xi)=\eta$.
\end{proof}

\subsubsection{$(1/2)$-Kenmostsu Lie algebroids}

Let $(A,\rho,[.,.])$ be a skew (resp. almost Lie, Lie) algebroid. An almost contact 
Riemannian structure $(\Phi,\xi,\eta,g)$ on $A$ is said to be $(1/2)$-Kenmotsu if we have
\begin{equation*}
\left( \nabla _{a}\Phi \right)(b) = \displaystyle\frac{1}{2}\left(
g(\Phi(a),b) \xi -\eta(b)\Phi(a)\right),
\end{equation*}
for any $a,b\in \Gamma(A)$. We say also that $(A,\rho,[.,.],\Phi,\xi,\eta,g)$ is a 
$(1/2)$-Kenmotsu skew (resp. almost Lie, Lie) algebroid.

\begin{lem}\label{phi-sharp-pi}
Let $(A,\rho,[.,.])$ be a skew algebroid. Assume that $(\Phi,\xi,\eta,g)$ is an almost contact
pseudo-Riemannian structure on $A$ and let $\Pi$ be the associated fundamental bivector
 field. If the fiber bundle morphism $\sharp_{\Pi,\xi}$ is a skew algebroid morphism, then
$$
\sharp_{\Pi,\xi}\left((\mathcal{D}_{\alpha}
J^\ast)\beta\right)=-\left(\nabla_{\sharp_{\Pi,\xi}(\alpha)}\Phi\right)(\sharp_{\Pi,\xi}(\beta)),
$$
for every $\alpha,\beta\in\Gamma(A^\star)$.
\end{lem}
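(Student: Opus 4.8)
The plan is to write $(\mathcal{D}_\alpha J^\star)\beta=\mathcal{D}_\alpha(J^\star\beta)-J^\star(\mathcal{D}_\alpha\beta)$, apply $\sharp_{\Pi,\xi}$, and then transport everything to the covariant side by means of two intertwining relations. The first is supplied by Proposition \ref{Levi-Civita-triplet}: its hypotheses hold here because $\sharp_{\Pi,\xi}$ is a skew algebroid morphism by assumption and an isometry by Proposition \ref{pi-ass-presque-contact}, so that $\sharp_{\Pi,\xi}(\mathcal{D}_\alpha\delta)=\nabla_{\sharp_{\Pi,\xi}(\alpha)}(\sharp_{\Pi,\xi}(\delta))$ for every $\delta\in\Gamma(A^\star)$. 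The second relation, which carries the real content, is
$$
\sharp_{\Pi,\xi}\circ J^\star=-\,\Phi\circ\sharp_{\Pi,\xi}.
$$

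To prove this I would start from the formulas obtained in the proof of Proposition \ref{pi-ass-presque-contact}, namely $\sharp_\Pi(\delta)=-\Phi(\sharp_g(\delta))$ and $\sharp_{\Pi,\xi}(\delta)=-\Phi(\sharp_g(\delta))+\eta(\sharp_g(\delta))\xi$ (formula (\ref{f1})), and note that the defining relation of $J^\star$ together with the skew\nobreakdash-symmetry $g(\Phi(a),b)=-g(a,\Phi(b))$ forces $J=-\Phi$ on $A$, whence $\sharp_g(J^\star\beta)=J\sharp_g(\beta)=-\Phi(\sharp_g(\beta))$. Substituting the latter into (\ref{f1}) and using $\eta\circ\Phi=0$ gives $\sharp_{\Pi,\xi}(J^\star\beta)=\Phi^2(\sharp_g(\beta))$; applying $\Phi$ to (\ref{f1}) and using $\Phi(\xi)=0$ gives $\Phi(\sharp_{\Pi,\xi}(\beta))=-\Phi^2(\sharp_g(\beta))$; comparing the two equalities yields the claimed intertwining.

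Granting the two relations, the conclusion is immediate: applying $\sharp_{\Pi,\xi}$ to $(\mathcal{D}_\alpha J^\star)\beta=\mathcal{D}_\alpha(J^\star\beta)-J^\star(\mathcal{D}_\alpha\beta)$, using Proposition \ref{Levi-Civita-triplet} on the first summand and, on the second, first the intertwining $\sharp_{\Pi,\xi}\circ J^\star=-\Phi\circ\sharp_{\Pi,\xi}$ and then Proposition \ref{Levi-Civita-triplet}, one obtains
$$
\sharp_{\Pi,\xi}\big((\mathcal{D}_\alpha J^\star)\beta\big)=-\,\nabla_{\sharp_{\Pi,\xi}(\alpha)}\big(\Phi(\sharp_{\Pi,\xi}(\beta))\big)+\Phi\big(\nabla_{\sharp_{\Pi,\xi}(\alpha)}(\sharp_{\Pi,\xi}(\beta))\big),
$$
which is exactly $-\big(\nabla_{\sharp_{\Pi,\xi}(\alpha)}\Phi\big)(\sharp_{\Pi,\xi}(\beta))$ by the definition of the covariant derivative of the $(1,1)$-tensor $\Phi$. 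The main obstacle is the intertwining identity $\sharp_{\Pi,\xi}\circ J^\star=-\Phi\circ\sharp_{\Pi,\xi}$: one must keep careful track of the $\xi$\nobreakdash-components of $\sharp_{\Pi,\xi}$ and invoke all three almost\nobreakdash-contact identities $\Phi^2=-\mathrm{Id}_A+\eta\otimes\xi$, $\eta\circ\Phi=0$, $\Phi(\xi)=0$, together with the sign coming from $J=-\Phi$; once it is in place the remainder is a two\nobreakdash-line manipulation of connections.
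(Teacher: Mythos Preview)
Your proof is correct and follows essentially the same route as the paper: both establish the intertwining relation $\sharp_{\Pi,\xi}\circ J^\star=-\Phi\circ\sharp_{\Pi,\xi}$ from formula~(\ref{f1}) together with $\sharp_g\circ J^\star=J\circ\sharp_g$ and the almost contact identities, and then combine it with Proposition~\ref{pi-ass-presque-contact} (equivalently Proposition~\ref{Levi-Civita-triplet}) to unwind $(\mathcal{D}_\alpha J^\star)\beta$ into $-(\nabla_{\sharp_{\Pi,\xi}(\alpha)}\Phi)(\sharp_{\Pi,\xi}(\beta))$. The only cosmetic difference is that the paper reaches the intertwining via $\sharp_g(J^\star\alpha)=\sharp_\Pi(\alpha)$ directly, whereas you pass through the intermediate expression $\Phi^2(\sharp_g(\beta))$; the two computations are equivalent.
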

\begin{proof}
By using the formula (\ref{f1}) and the fact that we have
$\sharp_g\circ J^\ast=J\circ \sharp_g$, we deduce that $
\sharp_{\Pi,\xi}(J^\ast\alpha) = -\Phi(\sharp_g(J^\ast\alpha))+
\eta(\sharp_g(J^\ast\alpha))\xi=-\Phi(\sharp_\Pi(\alpha))=-\Phi(\sharp_{\Pi,\xi}(\alpha)).
$ Therefore
\begin{equation}\label{pi-xi-J-ast-Phi}
\sharp_{\Pi,\xi}\circ J^\ast=-\Phi \circ \sharp_{\Pi,\xi}.
\end{equation}
Hence, with Proposition \ref{pi-ass-presque-contact}, we
have
\begin{eqnarray*}
\sharp_{\Pi,\xi}\left( \left( \mathcal{D}_{\alpha }J^\ast\right)
\beta \right) &=&\sharp_{\Pi,\xi}\left( \mathcal{D}_{\alpha }\left(
J^\ast\beta \right) \right) -\left( \sharp_{\Pi,\xi}\circ
J^\ast\right) \left( \mathcal{D}_{\alpha}\beta \right) , \\
&=&\mathcal{\nabla}_{\sharp_{\Pi,\xi}(\alpha)}\left(\sharp_{\Pi,\xi}\left(J^\ast\beta\right)\right)
+\Phi\left(\sharp_{\Pi,\xi}\left( \mathcal{D}_{\alpha}\beta \right) \right) , \\
&=&-\mathcal{\nabla }_{\sharp_{\Pi,\xi}(\alpha)}\left( \Phi \left(
\sharp_{\Pi,\xi}(\beta)\right) \right) +\Phi \left(
\mathcal{\nabla }_{\sharp_{\Pi,\xi}(\alpha)}\sharp_{\Pi,\xi}(\beta)\right) , \\
&=&-\left( \mathcal{\nabla }_{\sharp_{\Pi,\xi}(\alpha)}\Phi
\right)(\sharp_{\Pi,\xi}(\beta)).
\end{eqnarray*}
\end{proof}

\begin{prop}
Under the same hypotheses of the above lemma, the compatibility of
the triple $(\Pi,\xi,g)$ is equivalent to
$$
(\nabla_a \Phi)(b)=\dfrac{1}{2}\left(g(\Phi(a),b)\xi
-\eta(b)\Phi(a)\right),
$$
for any $a,b\in \Gamma(A)$, and if moreover the metric $g$ is
positive definite, then the triple $(\Pi,\xi,g)$ is compatible if
and only if the almost contact Riemannian skew algebroid
$(A,\rho,[.,.],\Phi,\xi,\eta,g)$ is $(1/2)$-Kenmotsu.
\end{prop}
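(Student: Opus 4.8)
The plan is to transport the second form of the compatibility condition, equation (\ref{compatibilite du triplet 2}), across the isomorphism $\sharp_{\Pi,\xi}$ and to recognize the $(1/2)$-Kenmotsu equation. By Proposition~\ref{pi-ass-presque-contact} the vector bundle morphism $\sharp_{\Pi,\xi}\colon A^\star\to A$ is an isometry, hence an isomorphism, so $\alpha\mapsto a:=\sharp_{\Pi,\xi}(\alpha)$ is a bijection on sections; moreover the triple $(\Pi,\xi,g)$ is compatible precisely when (\ref{compatibilite du triplet 2}) holds for all $\alpha,\beta\in\Gamma(A^\star)$. It therefore suffices to rewrite (\ref{compatibilite du triplet 2}) entirely in terms of $a=\sharp_{\Pi,\xi}(\alpha)$ and $b=\sharp_{\Pi,\xi}(\beta)$ and to check that it becomes $(\nabla_a\Phi)(b)=\frac{1}{2}(g(\Phi(a),b)\xi-\eta(b)\Phi(a))$.

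First I would record the algebraic identities forced by the almost contact hypothesis, using $\flat_g(\xi)=\eta$, $g(\xi,\xi)=1$, $\Phi(\xi)=0$, $\eta\circ\Phi=0$ and the skew-symmetry $g(\Phi(a),b)=-g(a,\Phi(b))$. The definition of the fundamental bivector field gives $g(J\sharp_g(\alpha),\sharp_g(\beta))=\Pi(\alpha,\beta)=-g(\Phi(\sharp_g(\alpha)),\sharp_g(\beta))$, whence $J=-\Phi$ on $A$ and, through $\sharp_g\circ J^\star=J\circ\sharp_g$, also $\sharp_g(J^\star\alpha)=-\Phi(\sharp_g(\alpha))$; in particular $J\xi=0$, so $\lambda=g(\xi,\xi)\flat_g(\xi)-\flat_g(J\xi)=\eta$ and $[.,.]_{\Pi,\xi}^g=[.,.]_{\Pi,\xi}^\eta$. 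Inverting formula (\ref{f1}) one obtains $\sharp_g(\alpha)=\Phi(a)+\eta(a)\xi$ with $\eta(a)=\alpha(\xi)$, and from this the four ingredients appearing on the right-hand side of (\ref{compatibilite du triplet 2}) are readily computed: $g^\star(\alpha,\beta)=g(a,b)$ (isometry), $\Pi(\alpha,\beta)=g(\sharp_g(\alpha),\Phi(\sharp_g(\beta)))=-g(\Phi(a),b)$, $\beta(\xi)=\eta(b)$, and $(J^\star\beta)(\xi)=-g(\Phi(\sharp_g(\beta)),\xi)=0$; likewise $\sharp_{\Pi,\xi}(\flat_g(\xi))=\sharp_{\Pi,\xi}(\eta)=\sharp_\Pi(\eta)+\xi=-\Phi(\xi)+\xi=\xi$, $\sharp_{\Pi,\xi}(J^\star\alpha)=-\Phi(a)$ by (\ref{pi-xi-J-ast-Phi}), and $\sharp_{\Pi,\xi}(J^\star\flat_g(\xi))=-\Phi(\xi)=0$.

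Next I would apply $\sharp_{\Pi,\xi}$ to both sides of (\ref{compatibilite du triplet 2}). Its left-hand side becomes $-(\nabla_a\Phi)(b)$ by Lemma~\ref{phi-sharp-pi}, which is exactly where the hypothesis that $\sharp_{\Pi,\xi}$ is a skew algebroid morphism is used, while the right-hand side collapses, in view of the identities above, to $\frac{1}{2}(-g(\Phi(a),b)\xi+\eta(b)\Phi(a))$, the two terms $g^\star(\alpha,\beta)J^\star\flat_g(\xi)$ and $(J^\star\beta)(\xi)\,\alpha$ simply dropping out. Hence (\ref{compatibilite du triplet 2}) holds for all $\alpha,\beta$ if and only if $(\nabla_a\Phi)(b)=\frac{1}{2}(g(\Phi(a),b)\xi-\eta(b)\Phi(a))$ for all $a,b\in\Gamma(A)$, which is the first equivalence; and when $g$ is positive definite this is, by the definition recalled above, precisely the assertion that $(A,\rho,[.,.],\Phi,\xi,\eta,g)$ is $(1/2)$-Kenmotsu. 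I do not expect a genuine obstacle here: once Lemma~\ref{phi-sharp-pi} and Proposition~\ref{pi-ass-presque-contact} are in hand the computation is essentially bookkeeping, the only points requiring a little care being the simplification $\Pi(\alpha,\beta)=-g(\Phi(a),b)$ and the vanishing of the two terms just mentioned.
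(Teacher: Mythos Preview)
Your proposal is correct and follows essentially the same route as the paper: both simplify (\ref{compatibilite du triplet 2}) using $J^\star\beta(\xi)=0$ and $J^\star\flat_g(\xi)=0$, then apply the isomorphism $\sharp_{\Pi,\xi}$ together with Lemma~\ref{phi-sharp-pi} and (\ref{pi-xi-J-ast-Phi}) to obtain the $(1/2)$-Kenmotsu identity. Your version is slightly more explicit in recording $J=-\Phi$ and $\lambda=\eta$, and you write $\Pi(\alpha,\beta)=-g(\Phi(a),b)$ where the paper writes the equivalent $g(a,\Phi(b))$, but these are cosmetic differences.
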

\begin{proof}
Since we have $J^\ast\flat_g(\xi)=\flat_g(J\xi)=-\flat_g(\Phi
\xi)=0$ and
$$
J^\ast\beta(\xi)=J^\ast\beta(\sharp_g(\eta))=\eta(\sharp_g(J^\ast\beta))=\eta(J
\sharp_g(\beta))=-\eta(\Phi(\sharp_g(\beta)))=0,
$$
then the formula (\ref{compatibilite du triplet 2}) becomes
$$
\left( \mathcal{D}_{\alpha }J^\ast\right) \beta  =\frac{1}{2}\left(
\Pi(\alpha,\beta)\eta- \beta(\xi)J^\ast\alpha \right).
$$
Applying $\sharp_{\Pi,\xi}$ which by Proposition
\ref{pi-ass-presque-contact} is an isometry and hence an
isomorphism, this last formula is equivalent to
$$
\sharp_{\Pi,\xi}\left(\left( \mathcal{D}_{\alpha }J^\ast\right)
\beta \right) =\frac{1}{2}\left(
\Pi(\alpha,\beta)\sharp_{\Pi,\xi}(\eta) -
\beta(\xi)\sharp_{\Pi,\xi}(J^\ast\alpha) \right).
$$
Now, by Formula (\ref{f1}), we have $\sharp_{\Pi,\xi}(\eta)=\xi$,
and if we put $a=\sharp_{\Pi,\xi}(\alpha)$ and
$b=\sharp_{\Pi,\xi}(\beta) $, then we have $\beta(\xi)=\eta(b)$,
also using (\ref{pi-xi-J-ast-Phi}), we have
$\sharp_{\Pi,\xi}(J^\ast\alpha)=-\Phi(a)$ and
$$
\begin{array}{ll}
\Pi(\alpha,\beta)
& =g(\sharp_g(\alpha),\Phi(\sharp_g(\beta))) \\
& =-g(\sharp_g(\alpha),\sharp_g(J^\ast\beta)) \\
& =-g^\ast(\alpha,J^\ast\beta) \\
& =g(a,\Phi(b)).
\end{array}
$$
It remains to use the lemma above.
\end{proof}

\begin{thm}
Assume that $(A,\rho,[.,.],\eta,g)$ is a contact Riemannian Lie algebroid and
let $(\Phi,\xi,\eta,g)$ be the associated almost contact Riemannian
structure. Assume that $(\Pi,\xi)$ is the Jacobi structure
associated with the contact form $\eta$. Then the triple
$(\Pi,\xi,g)$ is compatible if and only if $(A,\rho,[.,.],\Phi,\xi,\eta,g)$
is $(1/2)$-Kenmotsu.
\end{thm}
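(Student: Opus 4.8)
The plan is to obtain this theorem as an immediate consequence of the Proposition that immediately precedes it, once the data of the contact Riemannian Lie algebroid are matched with the data appearing in that Proposition and in Lemma \ref{phi-sharp-pi}. First I would recall the identifications already carried out inside the proof of Theorem \ref{riemannienne-contact-levi-civita}: with $(\Pi,\xi)$ the Jacobi structure associated with the contact form $\eta$ and $(\Phi,\xi,\eta,g)$ the associated almost contact Riemannian structure, one has $\sharp_{\Pi,\xi}=\sharp_\eta$ by Lemma \ref{sharp-Pixi-Omegaeta}, one has $\Phi=-J$, and $\Pi$ is precisely the fundamental bivector field of $(A,\rho,[.,.],\Phi,\xi,\eta,g)$, i.e. $\Pi(\alpha,\beta)=g(\sharp_g(\alpha),\Phi(\sharp_g(\beta)))$. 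Since the metric is Riemannian, $g(\xi,\xi)=1$, and since $\Phi(\xi)=0$ we get $J\xi=0$; hence the $A$-$1$-form $\lambda=g(\xi,\xi)\flat_g(\xi)-\flat_g(J\xi)$ attached to the triple $(\Pi,\xi,g)$ reduces to $\flat_g(\xi)=\eta$. Consequently $[.,.]_{\Pi,\xi}^g=[.,.]_{\Pi,\xi}^\eta$, and the Levi-Civita contravariant $A$-connection $\mathcal{D}$ of the triple $(\Pi,\xi,g)$ is exactly the Levi-Civita connection of $(A^\star,\rho_{\Pi,\xi},[.,.]_{\Pi,\xi}^\eta,g^\star)$.

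Next I would verify that the hypotheses of Lemma \ref{phi-sharp-pi} — which are the very hypotheses under which the preceding Proposition is stated — hold in our situation. By Remark \ref{dual-contact-Lie-alg}(2), $\sharp_{\Pi,\xi}=\sharp_\eta$ is a vector bundle isomorphism satisfying $\sharp_{\Pi,\xi}([\alpha,\beta]_{\Pi,\xi}^\eta)=[\sharp_{\Pi,\xi}(\alpha),\sharp_{\Pi,\xi}(\beta)]$; in view of the identification $[.,.]_{\Pi,\xi}^g=[.,.]_{\Pi,\xi}^\eta$ from the previous step, this exactly says that $\sharp_{\Pi,\xi}$ is a skew algebroid morphism for the bracket $[.,.]_{\Pi,\xi}^g$. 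Together with the fact that $(\Phi,\xi,\eta,g)$ is an almost contact pseudo-Riemannian structure on $A$ and that $\Pi$ is its fundamental bivector field, this is precisely what Lemma \ref{phi-sharp-pi}, and hence the Proposition above, requires.

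Finally, since a contact Riemannian Lie algebroid carries a positive definite metric $g$, I would invoke the second assertion of the Proposition immediately preceding this theorem: under these hypotheses, the triple $(\Pi,\xi,g)$ is compatible if and only if the almost contact Riemannian Lie algebroid $(A,\rho,[.,.],\Phi,\xi,\eta,g)$ is $(1/2)$-Kenmotsu. This is the claim.

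There is no genuinely hard step: all the analytic content is already packaged into Lemma \ref{phi-sharp-pi} (through Proposition \ref{pi-ass-presque-contact} and Proposition \ref{Levi-Civita-triplet}) and into the Proposition stated just above. The only point that needs care — and the only thing one could get wrong — is the bookkeeping in the first paragraph: checking that the bivector $\Pi$ arising from the Jacobi structure of $\eta$ coincides with the fundamental bivector field of $(\Phi,\xi,\eta,g)$, and that the auxiliary $A$-$1$-form $\lambda$ of the triple degenerates to $\eta$, so that the contravariant connection $\mathcal{D}$ used in the compatibility condition (\ref{compatibilte du triplet}) is the same connection used in the Proposition. Both facts are already contained in the proof of Theorem \ref{riemannienne-contact-levi-civita}, so the write-up amounts to quoting them and then applying the Proposition.
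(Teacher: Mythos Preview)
Your proposal is correct and follows essentially the same approach as the paper's own proof: quote from the proof of Theorem \ref{riemannienne-contact-levi-civita} that $\Pi$ is the fundamental bivector field of $(\Phi,\xi,\eta,g)$ and that $\lambda=\eta$, then apply the preceding Proposition. Your write-up is simply more explicit about verifying the skew-algebroid-morphism hypothesis via Remark \ref{dual-contact-Lie-alg}(2), which the paper leaves implicit.
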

\begin{proof}
We have proved, see the proof of Theorem \ref{riemannienne-contact-levi-civita}, that $\Pi$ is the bivector field of the proposition
above and that $\lambda=\eta$.
\end{proof}

\subsection{Locally conformally K\"{a}hler Lie algebroids}

For the notions of almost Hermitian, Hermitian, K\"{a}hler Lie algebroids see \cite{ida-popescu1}. 
In this paragraph $(A,\rho,[.,.])$ is a skew algebroid.

\subsubsection{Riemannian metric associated with a locally conformally symplectic structure}

Assume that $\Omega\in \Gamma(\wedge^2 A^\star)$ is a nondegenerate
$2$-form and let $\theta\in\Gamma(A^\star)$. Assume that the pair
$(\Pi,\xi)$ is associated with the pair $(\Omega,\theta)$. We say
that a pseudo-Riemannian metric $g$ is associated with the pair
$(\Omega,\theta)$ if $\sharp_{\Omega,\theta}:=\sharp_{\Pi,\xi}$ is
an isometry, i.e. if
\begin{equation}\label{met-ass-loc-conf-symp}
g\left(\sharp_{\Omega,\theta}(\alpha),\sharp_{\Omega,\theta}(\beta)\right)
=g^\ast(\alpha,\beta),
\end{equation}
for every $\alpha ,\beta \in \Gamma(A^\star)$.

If $\theta =0$, then $\xi=0$ and
$\sharp_{\Omega,\theta}=\sharp_{\Omega}$, and if $J$ and $J^\ast$
are the fields of endomorphisms defined by the formulae (\ref{J et J
star}), then
$$
\begin{array}{ll}
g(\sharp_{\Omega,\theta}(\alpha),\sharp_{\Omega,\theta}(\beta))
&= g(\sharp_{\Omega}(\alpha),\sharp_{\Omega}(\beta)) \\
&= g^\ast(\flat_g(\sharp_{\Omega}(\alpha)),\flat_g(\sharp_{\Omega}(\beta))) \\
&= g^\ast(J^\ast\alpha,J^\ast\beta),
\end{array}
$$
for any $\alpha ,\beta \in \Gamma(A^\star)$. Hence, in the case
$\theta=0$, the relation (\ref{met-ass-loc-conf-symp}) is equivalent
to
\begin{equation*}
g^\ast\left( J^\ast\alpha ,J^\ast\beta \right) =
g^\ast(\alpha,\beta).
\end{equation*}
If moreover $g$ is positive definite, this last identity means that
the pair $(\Omega,g)$ is an almost Hermitian structure on $A$ and
that $J$ is the associated almost complex structure, i.e., we have
$$
g(Ja,Jb)=g(a,b) \qquad\textrm{ and }\qquad \Omega(a,b)=g(Ja,b),
$$
for every $a,b\in\Gamma(A)$.

\begin{thm}\label{loc-conf-symp-met-ass-levi-civita}
Assume that $(\Omega,\theta)$ is a locally conformally symplectic
structure on $A$ and that $g$ is an associated metric. We have
$$
\sharp_{\Omega,\theta}\left(\mathcal{D}_\alpha\beta\right)
=\nabla_{\sharp_{\Omega,\theta}(\alpha)}\left(\sharp_{\Omega,\theta}(\beta)\right),
$$
for every $\alpha,\beta\in \Gamma(A^\star)$.
\end{thm}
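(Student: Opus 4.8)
The plan is to deduce the statement directly from Proposition~\ref{Levi-Civita-triplet}, applied to the triple $(\Pi,\xi,g)$. Recall that $\sharp_{\Omega,\theta}$ is by definition $\sharp_{\Pi,\xi}$, and that $g$ being a metric associated with $(\Omega,\theta)$ means precisely that $\sharp_{\Pi,\xi}$ is an isometry, i.e.\ relation~(\ref{met-ass-loc-conf-symp}); this is one of the two hypotheses of Proposition~\ref{Levi-Civita-triplet}. Hence the whole matter reduces to verifying that $\sharp_{\Pi,\xi}$ is a skew algebroid morphism for the bracket $[.,.]_{\Pi,\xi}^{g}$ that defines $\mathcal{D}$.

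The first step I would carry out is the identity $\lambda=\theta$, where $\lambda=g(\xi,\xi)\flat_g(\xi)-\flat_g(J\xi)$ is the $A$-$1$-form associated with the triple $(\Pi,\xi,g)$, so that $[.,.]_{\Pi,\xi}^{g}=[.,.]_{\Pi,\xi}^{\lambda}$. In the locally conformally symplectic setting we have $\sharp_\Pi=\sharp_\Omega$ and $\theta(\xi)=\Omega(\xi,\xi)=0$, whence $\sharp_{\Pi,\xi}(\theta)=\sharp_\Pi(\theta)+\theta(\xi)\xi=\xi$. Then, for an arbitrary $\beta\in\Gamma(A^{\star})$, using that $\sharp_{\Pi,\xi}$ is an isometry together with the defining relations of $\sharp_\Pi$ and of $J$ (which give $\sharp_\Pi(\flat_g(\xi))=J\xi$), one computes
$$\beta(\sharp_g(\theta))=g^{\star}(\beta,\theta)=g\big(\sharp_{\Pi,\xi}(\beta),\xi\big)=g\big(\sharp_\Pi(\beta),\xi\big)+\beta(\xi)g(\xi,\xi)=\beta(\xi)g(\xi,\xi)-\beta(J\xi).$$
Since this holds for every $\beta$, we get $\sharp_g(\theta)=g(\xi,\xi)\xi-J\xi=\sharp_g(\lambda)$, hence $\lambda=\theta$.

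With $\lambda=\theta$ at hand, $[.,.]_{\Pi,\xi}^{g}=[.,.]_{\Pi,\xi}^{\theta}$. Since $(\Omega,\theta)$ is locally conformally symplectic, the pair $(\Pi,\xi)$ is a Jacobi structure and $\sharp_{\Pi,\xi}(\theta)=\xi$, so Theorem~\ref{alg-alt-jacobiO} --- this is exactly what is recorded in Remark~\ref{dual-locally-conf-sympl-Lie-alg} --- yields
$$\sharp_{\Pi,\xi}\big([\alpha,\beta]_{\Pi,\xi}^{\theta}\big)=\big[\sharp_{\Pi,\xi}(\alpha),\sharp_{\Pi,\xi}(\beta)\big]$$
for all $A$-$1$-forms $\alpha,\beta$; that is, $\sharp_{\Pi,\xi}$ is a skew algebroid morphism for $[.,.]_{\Pi,\xi}^{g}$. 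Both hypotheses of Proposition~\ref{Levi-Civita-triplet} are therefore met, and we conclude $\sharp_{\Omega,\theta}(\mathcal{D}_\alpha\beta)=\nabla_{\sharp_{\Omega,\theta}(\alpha)}\big(\sharp_{\Omega,\theta}(\beta)\big)$, which is the assertion.

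The only genuinely computational point is the identity $\lambda=\theta$; everything else is an assembly of results already proved. I expect the main --- and rather mild --- obstacle to be keeping the duality and sign conventions straight there: the interplay between $\flat_g$, the equality $\sharp_\Pi=\sharp_\Omega$, the endomorphism $J$, and the vanishing $\theta(\xi)=0$. No new idea beyond the isometry hypothesis is needed.
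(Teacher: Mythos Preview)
Your proposal is correct and follows essentially the same route as the paper: reduce to Proposition~\ref{Levi-Civita-triplet} via Remark~\ref{dual-locally-conf-sympl-Lie-alg}, so that the only thing to check is $\lambda=\theta$. The paper establishes $\lambda=\theta$ by showing $\sharp_{\Pi,\xi}(\lambda)=\xi=\sharp_{\Pi,\xi}(\theta)$ and invoking injectivity of $\sharp_{\Pi,\xi}$, whereas you compute $\sharp_g(\theta)=g(\xi,\xi)\xi-J\xi=\sharp_g(\lambda)$; this is the same calculation organized from the other end, not a different argument.
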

\begin{proof}
By Proposition \ref{Levi-Civita-triplet} and Remark
\ref{dual-locally-conf-sympl-Lie-alg}, we need only to prove that
$\lambda=\theta$. On one hand, we have $
\sharp_{\Pi,\xi}(\theta)=\xi$. On the other hand, for any
$\alpha\in\Gamma(A^\star)$, we have
$$
\begin{array}{ll}
g(\sharp_{\Pi,\xi}(\lambda),\sharp_{\Pi,\xi}(\alpha))&
=g(\sharp_g(\lambda),\sharp_g(\alpha))\\
 & =g(\xi,\xi)\alpha(\xi)+g(\xi,J\sharp_g(\alpha)) \\
 & =g(\xi,\xi)\alpha(\xi)+g(\xi,\sharp_\Pi(\alpha)) \\
 & =g(\xi,\sharp_{\Pi,\xi}(\alpha)).
\end{array}
$$
Since $\sharp_{\Pi,\xi}$ is an isometry, hence an isomorphism, then
$\sharp_{\Pi,\xi}(\lambda)=\xi$.
\end{proof}

\begin{cor}\label{levi-civita-omega}
Under the same hypotheses of the theorem above, we have
$$
\mathcal{D}\Pi(\alpha,\beta,\gamma)=\nabla
\Omega(\sharp_{\Omega,\theta}(\alpha),\sharp_{\Omega,\theta}(\beta),\sharp_{\Omega,\theta}(\gamma)).
$$
\end{cor}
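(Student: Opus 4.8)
The strategy is to expand $\mathcal{D}\Pi$ from the definition of the covariant derivative of a $2$-form on the skew algebroid $(A^\star,\rho_{\Pi,\xi},[.,.]_{\Pi,\xi}^{g})$ and to transport every term to $A$ through the isomorphism $\sharp_{\Omega,\theta}=\sharp_{\Pi,\xi}$, using Theorem~\ref{loc-conf-symp-met-ass-levi-civita}. For $\alpha,\beta,\gamma\in\Gamma(A^\star)$ one has
$$
\mathcal{D}\Pi(\alpha,\beta,\gamma)=\rho_{\Pi,\xi}(\alpha)\cdot\Pi(\beta,\gamma)-\Pi(\mathcal{D}_\alpha\beta,\gamma)-\Pi(\beta,\mathcal{D}_\alpha\gamma).
$$
Setting $a=\sharp_{\Omega,\theta}(\alpha)$, $b=\sharp_{\Omega,\theta}(\beta)$, $c=\sharp_{\Omega,\theta}(\gamma)$, one has $\rho_{\Pi,\xi}(\alpha)=\rho(a)$ by definition of $\rho_{\Pi,\xi}$, and Theorem~\ref{loc-conf-symp-met-ass-levi-civita} gives $\sharp_{\Omega,\theta}(\mathcal{D}_\alpha\beta)=\nabla_a b$ and $\sharp_{\Omega,\theta}(\mathcal{D}_\alpha\gamma)=\nabla_a c$. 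Thus everything reduces to the single identity
$$
\Omega\left(\sharp_{\Pi,\xi}(\mu),\sharp_{\Pi,\xi}(\nu)\right)=\Pi(\mu,\nu),\qquad \mu,\nu\in\Gamma(A^\star),
$$
applied to $(\mu,\nu)=(\beta,\gamma)$, then $(\mathcal{D}_\alpha\beta,\gamma)$, then $(\beta,\mathcal{D}_\alpha\gamma)$: it turns the three terms above into $\rho(a)\cdot\Omega(b,c)$, $\Omega(\nabla_a b,c)$ and $\Omega(b,\nabla_a c)$, and their combination $\rho(a)\cdot\Omega(b,c)-\Omega(\nabla_a b,c)-\Omega(b,\nabla_a c)$ is exactly $\nabla\Omega(a,b,c)$, the right-hand side of the corollary.

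The only point that really needs an argument is the intertwining identity $\Omega(\sharp_{\Pi,\xi}(\mu),\sharp_{\Pi,\xi}(\nu))=\Pi(\mu,\nu)$, which I would verify by a short direct computation. Since $\sharp_{\Pi,\xi}(\mu)=\sharp_\Pi(\mu)+\mu(\xi)\xi$ with $\sharp_\Pi=\sharp_\Omega$, and since $\xi=\sharp_\Omega(\theta)$, bilinearity and $\Omega(\xi,\xi)=0$ give
$$
\Omega\left(\sharp_{\Pi,\xi}(\mu),\sharp_{\Pi,\xi}(\nu)\right)=\Omega\left(\sharp_\Omega(\mu),\sharp_\Omega(\nu)\right)+\nu(\xi)\,\Omega\left(\sharp_\Omega(\mu),\xi\right)+\mu(\xi)\,\Omega\left(\xi,\sharp_\Omega(\nu)\right).
$$
The first summand is $\Pi(\mu,\nu)$ by definition of $\Pi$; moreover $\Omega(\sharp_\Omega(\mu),\xi)=\Omega(\sharp_\Omega(\mu),\sharp_\Omega(\theta))=\Pi(\mu,\theta)=-\Pi(\theta,\mu)=-\mu(\sharp_\Pi(\theta))=-\mu(\xi)$, and likewise $\Omega(\xi,\sharp_\Omega(\nu))=-\Omega(\sharp_\Omega(\nu),\xi)=\nu(\xi)$, so the last two summands cancel and one is left with $\Pi(\mu,\nu)$.

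There is no real obstacle here: the corollary simply records that $\sharp_{\Omega,\theta}$ is an isomorphism of pseudo-Riemannian skew algebroids from $(A^\star,\rho_{\Pi,\xi},[.,.]_{\Pi,\xi}^{g},g^\star)$ onto $(A,\rho,[.,.],g)$ --- a skew algebroid isomorphism by Remark~\ref{dual-locally-conf-sympl-Lie-alg} and an isometry by the hypothesis on $g$ --- so it intertwines the Levi-Civita connections (Theorem~\ref{loc-conf-symp-met-ass-levi-civita}) and hence the covariant derivatives of corresponding tensors, $\Pi$ corresponding to $\Omega$ via the intertwining identity. The only things to watch are the distinction between $\sharp_{\Pi,\xi}$ and $\sharp_\Omega$ and the sign conventions ($\flat_\Omega(a)=-i_a\Omega$ and $\theta(\xi)=0$) that enter that identity. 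For $\theta=0$ --- hence $\xi=0$ and $\sharp_{\Omega,\theta}=\sharp_\Omega$ --- this recovers the K\"ahler-type identity obtained earlier for symplectic structures (the Example following Proposition~\ref{bivector-compatibility-Poisson}).
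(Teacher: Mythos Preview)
Your proof is correct and follows essentially the same approach as the paper: both arguments establish the intertwining identity $\Omega(\sharp_{\Pi,\xi}(\mu),\sharp_{\Pi,\xi}(\nu))=\Pi(\mu,\nu)$ (the paper's formula~(\ref{omega-pi-xi})) and then transfer the covariant derivative through $\sharp_{\Omega,\theta}$ via Theorem~\ref{loc-conf-symp-met-ass-levi-civita}. Your write-up is more explicit than the paper's, which simply states ``It suffices now to compute $\nabla\Omega(\sharp_{\Pi,\xi}(\alpha),\sharp_{\Pi,\xi}(\beta),\sharp_{\Pi,\xi}(\gamma))$ and use the theorem above.''
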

\begin{proof}
We have $\Omega \left( \xi ,\sharp_{\Pi}(\alpha) \right) =-
i_{\sharp_{\Pi}(\alpha)}\Omega(\xi) =\alpha(\xi)$ and likewise
$\Omega(\xi,\sharp_\Pi(\beta))=\beta(\xi)$. Consequently
\begin{equation}\label{omega-pi-xi}
\Omega(\sharp_{\Pi,\xi}(\alpha)
,\sharp_{\Pi,\xi}(\beta))=\Pi(\alpha,\beta).
\end{equation}
It suffices now to compute $\nabla
\Omega(\sharp_{\Pi,\xi}(\alpha),\sharp_{\Pi,\xi}(\beta),\sharp_{\Pi,\xi}(\gamma))
$ and use the theorem above.
\end{proof}

\subsubsection{Locally conformally K\"{a}hler Lie algebroids}

Recall, see \cite{ida-popescu1}, that if $\Omega$ is a nondegenerate
$2$-form on $A$ and $g$ an associated Riemannian metric, the almost
Hermitian structure $(\Omega,g)$ is Hermitian if the associated
almost complex structure is integrable, and K\"ahler if moreover
$\Omega$ is closed. Recall also that if $(\Omega,g)$ is almost
Hermitian, then it is K\"ahler if and only if the $2$-form $\Omega$
is parallel for the Levi-Civita connection of $g$.

If $(\Omega,\theta)$ is a locally conformally symplectic structure and
$(\Omega,g)$ a Hermitian structure, we say that the triple
$(\Omega,\theta,g)$ is a locally conformally K\"ahler structure.

We shall prove that if $(\Omega,\theta)$ is a locally conformally
symplectic structure on $A$ and that $(\Pi,\xi)$ is the associated
Jacobi structure, if $g$ is a Riemannian metric associated with
$\Omega$ and with $(\Omega,\theta)$, the compatibility of the triple
$(\Pi,\xi,g)$ induces a locally conformally K\"ahler structure on $A$.

\begin{lem}
Assume that $\Omega\in \Gamma(\wedge^2A)$ is a nondegenerate
$2$-form on $A$ and let $\theta\in\Gamma(A^\star)$. Assume that
$(\Pi,\xi)$ is the pair associated with $(\Omega,\theta)$. If the
pseudo-Riemannian metric $g$ is associated with the $2$-form
$\Omega$ and with the pair $(\Omega,\theta)$, then we have
$$
J\circ \sharp_{\Pi,\xi}=\sharp_{\Pi,\xi}\circ J^\ast.
$$
\end{lem}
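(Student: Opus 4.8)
The plan is to prove the equality of the two vector bundle morphisms $J\circ\sharp_{\Pi,\xi}$ and $\sharp_{\Pi,\xi}\circ J^{\star}$ from $A^{\star}$ to $A$ by testing them against the metric $g$. Since $g$ is associated with the pair $(\Omega,\theta)$, the morphism $\sharp_{\Pi,\xi}=\sharp_{\Omega,\theta}$ is an isometry by (\ref{met-ass-loc-conf-symp}), and it is an isomorphism because $g^{\star}$ is nondegenerate (so $\sharp_{\Pi,\xi}$ is injective) and $A$, $A^{\star}$ have the same rank. Hence it is enough to show, for all $\alpha,\beta\in\Gamma(A^{\star})$, the scalar identity $g(J\sharp_{\Pi,\xi}(\alpha),\sharp_{\Pi,\xi}(\beta))=g(\sharp_{\Pi,\xi}(J^{\star}\alpha),\sharp_{\Pi,\xi}(\beta))$; the nondegeneracy of $g$ together with the surjectivity of $\sharp_{\Pi,\xi}$ then gives the lemma.

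First I would unwind the two hypotheses on $g$. That $g$ is associated with the $2$-form $\Omega$ means that $(\Omega,g)$ is an almost Hermitian structure whose associated complex structure is precisely the endomorphism $J$ of (\ref{J et J star}), equivalently $J^{2}=-\id$ and $g(Ja,Jb)=g(a,b)$, so that $\Omega(a,b)=g(Ja,b)$ for all $a,b\in\Gamma(A)$ (compare the discussion preceding Theorem \ref{loc-conf-symp-met-ass-levi-civita}). Using this on the left-hand side gives $g(J\sharp_{\Pi,\xi}(\alpha),\sharp_{\Pi,\xi}(\beta))=\Omega(\sharp_{\Pi,\xi}(\alpha),\sharp_{\Pi,\xi}(\beta))$. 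For the right-hand side I would use that $\sharp_{\Pi,\xi}$ is an isometry together with the defining relation $g^{\star}(J^{\star}\alpha,\beta)=\Pi(\alpha,\beta)$ from (\ref{J et J star}): $g(\sharp_{\Pi,\xi}(J^{\star}\alpha),\sharp_{\Pi,\xi}(\beta))=g^{\star}(J^{\star}\alpha,\beta)=\Pi(\alpha,\beta)$.

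The bridge between the two computations is the identity $\Omega(\sharp_{\Pi,\xi}(\alpha),\sharp_{\Pi,\xi}(\beta))=\Pi(\alpha,\beta)$, which is exactly (\ref{omega-pi-xi}), established in the proof of Corollary \ref{levi-civita-omega}; it uses only $\sharp_{\Pi}=\sharp_{\Omega}$, $\xi=\sharp_{\Omega}(\theta)$ and the cancellations coming from $i_{\xi}\Omega$, and involves no metric. Stringing the three equalities together yields $g(J\sharp_{\Pi,\xi}(\alpha),\sharp_{\Pi,\xi}(\beta))=\Pi(\alpha,\beta)=g(\sharp_{\Pi,\xi}(J^{\star}\alpha),\sharp_{\Pi,\xi}(\beta))$, and the proof concludes as indicated above.

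I do not expect a genuine obstacle: once the dictionary is in place the argument is a four-term chain of equalities. The only delicate point is the bookkeeping: reading ``$g$ associated with $\Omega$'' as $\Omega(a,b)=g(Ja,b)$ with the same $J$ appearing in the statement, and keeping $J$, $J^{\star}$, $\sharp_{\Pi}$, $\sharp_{g}$ and $\sharp_{\Pi,\xi}$ distinct. If one prefers to avoid invoking (\ref{omega-pi-xi}), an alternative is to expand $\sharp_{\Pi,\xi}=\sharp_{\Pi}+(\,\cdot\,)(\xi)\,\xi$ on both sides, cancel the $\sharp_{\Pi}$-contributions using $\sharp_{\Pi}=J\circ\sharp_{g}$ and $\sharp_{g}\circ J^{\star}=J\circ\sharp_{g}$, and then verify directly that the leftover terms $\alpha(\xi)\,J\xi$ and $(J^{\star}\alpha)(\xi)\,\xi$ agree, using the relation $\lambda=\theta$ from Theorem \ref{loc-conf-symp-met-ass-levi-civita}; this route is more computational, and I would use the isometry argument instead.
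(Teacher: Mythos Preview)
Your proposal is correct and follows essentially the same route as the paper: the authors also establish the chain $g(J\sharp_{\Pi,\xi}(\alpha),\sharp_{\Pi,\xi}(\beta))=\Omega(\sharp_{\Pi,\xi}(\alpha),\sharp_{\Pi,\xi}(\beta))=\Pi(\alpha,\beta)=g^{\star}(J^{\star}\alpha,\beta)=g(\sharp_{\Pi,\xi}(J^{\star}\alpha),\sharp_{\Pi,\xi}(\beta))$ using $\Omega(a,b)=g(Ja,b)$, the identity (\ref{omega-pi-xi}), and the isometry hypothesis, and conclude by the bijectivity of $\sharp_{\Pi,\xi}$. Your write-up is simply more explicit about why $\sharp_{\Pi,\xi}$ is an isomorphism and about the interpretation of ``$g$ associated with $\Omega$''.
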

\begin{proof}
Since the metric $g$ is assumed to be associated with $\Omega$ and
using  (\ref{omega-pi-xi}), we get
$$
g(J\sharp_{\Pi,\xi}(\alpha),\sharp_{\Pi,\xi}(\beta))=\Omega(\sharp_{\Pi,\xi}(\alpha),\sharp_{\Pi,\xi}(\beta))=\Pi(\alpha,\beta)=g^\ast(J^\ast\alpha,\beta),
$$
and since  $g$ is also assumed to be associated with the pair
$(\Omega,\theta)$, i.e. $\sharp_{\Pi,\xi}$ is an isometry, then
$$
g(J\sharp_{\Pi,\xi}(\alpha),\sharp_{\Pi,\xi}(\beta))=g(\sharp_{\Pi,\xi}(J^\ast\alpha),\sharp_{\Pi,\xi}(\beta)).
$$
Finally, since $\sharp_{\Pi,\xi}$ is an isometry, hence an
isomorphism, the result follows.
\end{proof}

\begin{thm}
Assume that $(A,\rho,[.,.])$ is an almost Lie algebroid. Let $(\Omega,d_\rho f)$ be a conformally symplectic structure on $A$
and let $(\Pi,\xi)$ be the associated Jacobi structure. If $g$ is a
Riemannian metric associated with $\Omega$ and with $(\Omega ,d_\rho f)$,
then the triple $(\Pi,\xi,g)$ is compatible if and only if the
triple $(\Omega,d_\rho f,g)$ is a conformally K\"ahler structure.
\end{thm}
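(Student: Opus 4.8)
The plan is to transport the compatibility condition (\ref{compatibilte du triplet}) along the isometry $\sharp:=\sharp_{\Pi,\xi}=\sharp_{\Omega,d_{\rho}f}$, turn it into an identity for the covariant derivative $\nabla\Omega$ on $A$, and recognise the latter as the vanishing of the fundamental $2$-form of the conformally rescaled metric $e^{f}g$; the conclusion then follows from the recalled characterisation of K\"ahler structures. Write $\theta=d_{\rho}f$ (a closed $1$-form, since $A$ is almost Lie, so $(\Omega,\theta)$ is indeed conformally symplectic), let $J,J^{\star}$ be the endomorphisms of (\ref{J et J star}) attached to $(\Pi,g)$, and recall: from the preceding lemma, $J\circ\sharp=\sharp\circ J^{\star}$; from the proof of Theorem \ref{loc-conf-symp-met-ass-levi-civita}, that $\lambda=\theta$ and $\sharp(\theta)=\xi$, so that $[.,.]_{\Pi,\xi}^{g}=[.,.]_{\Pi,\xi}^{\theta}$, $\sharp$ is a skew algebroid isomorphism (Remark \ref{dual-locally-conf-sympl-Lie-alg}) and an isometry; and that, $g$ being associated with $\Omega$, the pair $(\Omega,g)$ is almost Hermitian with $\Omega(a,b)=g(Ja,b)$, $J^{2}=-\mathrm{id}$, $g(Ja,Jb)=g(a,b)$.

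For the first step, fix $\alpha,\beta,\gamma\in\Gamma(A^{\star})$ and put $a=\sharp(\alpha)$, $b=\sharp(\beta)$, $c=\sharp(\gamma)$. By Corollary \ref{levi-civita-omega}, $\mathcal{D}\Pi(\alpha,\beta,\gamma)=\nabla\Omega(a,b,c)$; by (\ref{omega-pi-xi}), $\Pi(\alpha,\beta)=\Omega(a,b)$ and $\Pi(\alpha,\gamma)=\Omega(a,c)$; and since $\sharp$ is an isometry, $g^{\star}(\alpha,\beta)=g(a,b)$ and $g^{\star}(\alpha,\gamma)=g(a,c)$. Moreover, $\theta=-i_{\xi}\Omega$ together with $\sharp_{\Pi}=\sharp_{\Omega}$ gives $\theta(c)=-\Omega(\xi,\sharp_{\Pi}(\gamma))=-\gamma(\xi)$, hence $\gamma(\xi)=-\theta(c)$ (and likewise $\beta(\xi)=-\theta(b)$); applying this identity to $J^{\star}\gamma$ and using $\sharp\circ J^{\star}=J\circ\sharp$ yields $(J^{\star}\gamma)(\xi)=-\theta(Jc)$ (and $(J^{\star}\beta)(\xi)=-\theta(Jb)$). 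Substituting all of this into (\ref{compatibilte du triplet}) and using that $\sharp$ is onto, I get that $(\Pi,\xi,g)$ is compatible if and only if
\begin{equation}\label{lck-nabla}
\nabla\Omega(a,b,c)=\tfrac{1}{2}\bigl(-\theta(c)\,\Omega(a,b)+\theta(b)\,\Omega(a,c)+\theta(Jc)\,g(a,b)-\theta(Jb)\,g(a,c)\bigr)
\end{equation}
for all $a,b,c\in\Gamma(A)$.

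For the second step, set $\widetilde{g}=e^{f}g$ and $\widetilde{\Omega}=e^{f}\Omega$. Then $d_{\rho}\widetilde{\Omega}=e^{f}(\theta\wedge\Omega+d_{\rho}\Omega)=0$ by the conformally symplectic relation, and $(\widetilde{\Omega},\widetilde{g})$ is again almost Hermitian with the same associated almost complex structure $J$, since rescaling $\Omega$ and $g$ by the same positive factor preserves both $\widetilde{\Omega}(a,b)=\widetilde{g}(Ja,b)$ and $\widetilde{g}(Ja,Jb)=\widetilde{g}(a,b)$. Let $\widetilde{\nabla}$ be the Levi-Civita connection of $\widetilde{g}$. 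From the Koszul formula (\ref{koszul}) one obtains the usual conformal-change identity $\widetilde{\nabla}_{a}b=\nabla_{a}b+\tfrac{1}{2}\theta(a)b+\tfrac{1}{2}\theta(b)a-\tfrac{1}{2}g(a,b)\,\sharp_{g}(\theta)$; inserting it into $(\widetilde{\nabla}_{a}\widetilde{\Omega})(b,c)=e^{f}\bigl((\nabla_{a}\Omega)(b,c)+\theta(a)\Omega(b,c)-\Omega(\widetilde{\nabla}_{a}b-\nabla_{a}b,c)-\Omega(b,\widetilde{\nabla}_{a}c-\nabla_{a}c)\bigr)$ and using $\Omega(\sharp_{g}(\theta),\cdot)=-\theta(J\cdot)$, the $\theta(a)\Omega(b,c)$ terms cancel and one finds that $\widetilde{\nabla}\widetilde{\Omega}=0$ is precisely the identity (\ref{lck-nabla}). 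Thus $(\Pi,\xi,g)$ is compatible if and only if $\widetilde{\Omega}$ is parallel for $\widetilde{\nabla}$.

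Finally, since $\widetilde{\Omega}$ is closed and $(\widetilde{\Omega},\widetilde{g})$ is almost Hermitian, the recalled criterion gives: $\widetilde{\Omega}$ is $\widetilde{\nabla}$-parallel $\Longleftrightarrow$ $(\widetilde{\Omega},\widetilde{g})$ is K\"ahler $\Longleftrightarrow$ its associated almost complex structure $J$ is integrable $\Longleftrightarrow$ $(\Omega,g)$ is Hermitian $\Longleftrightarrow$ $(\Omega,d_{\rho}f,g)$ is a conformally K\"ahler structure (the locally conformally symplectic part being the hypothesis). Combined with the second step, this proves the stated equivalence. The main obstacle is the bookkeeping in the first step — identifying $\gamma(\xi)=-\theta(\sharp(\gamma))$ and its $J^{\star}$-analogue, and checking that the four terms of (\ref{compatibilte du triplet}) reassemble into (\ref{lck-nabla}) — together with verifying that the conformal-change computation of the third step reproduces exactly the same right-hand side; the remaining steps are direct applications of Corollary \ref{levi-civita-omega}, the preceding lemma, Remark \ref{dual-locally-conf-sympl-Lie-alg}, and the recalled K\"ahler criterion.
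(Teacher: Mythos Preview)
Your proof is correct and follows essentially the same route as the paper: both arguments use Corollary \ref{levi-civita-omega} and the isometry $\sharp_{\Pi,\xi}$ to transport (\ref{compatibilte du triplet}) into the identity for $\nabla\Omega$, then show via the conformal-change formula for the Levi-Civita connection that this identity is exactly $\widetilde{\nabla}(e^{f}\Omega)=0$ for $\widetilde{g}=e^{f}g$. The only differences are cosmetic: the paper runs the two computations in the opposite order (first expanding $\nabla^{f}(e^{f}\Omega)$, then translating under $\sharp_{\Pi,\xi}$) and writes the intermediate identity with $\Omega(\sharp_{g}(d_{\rho}f),\cdot)$ instead of your equivalent $-\theta(J\cdot)$, and it leaves the final ``parallel $\Leftrightarrow$ K\"ahler $\Leftrightarrow$ Hermitian'' step implicit, whereas you spell it out.
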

\begin{proof}
We need to prove that the triple $(\Pi,\xi,g)$ is compatible if and
only if the pair  $(e^{f}\Omega,e^{f}g)$ is compatible, i.e., if and
only if the $A$-$2$-form $e^f\Omega$ is parallel with respect to the
Levi-Civita connection $\nabla^f$ associated with the metric
$g^f=e^fg$. As the connections $\nabla$ and $\nabla^f$ are related
by the formula
$$
\nabla_{a}^{f}b=\nabla_{a}b+\frac{1}{2}\left(d_\rho f(a)b+d_\rho f(b)a-g(a,b)\sharp_g(d_\rho f) \right),
$$
we deduce that
$$
\begin{array}{lll}
\nabla^f\Omega(a,b,c) & = & \nabla\Omega(a,b,c)-d_{\rho}f(a)\Omega(b,c) \\ & & -\frac{1}{2}d_{\rho}f(b)\Omega(a,c)+\frac{1}{2}d_{\rho}f(c)\Omega(a,b) \\
& & +\frac{1}{2}\left(g(a,b)\Omega(\sharp_{g}(d_{\rho}f),c)-g(a,c)\Omega(\sharp_{g}(d_{\rho}f),b)\right),
\end{array}
$$
and then that
$$
\nabla^f (e^f\Omega)(a,b,c)=
e^f\left(d_{\rho}f(a)\Omega(b,c)+\nabla^f\Omega(a,b,c)\right) =e^f
\Lambda_f(a,b,c),
$$
where we have set
$$
\begin{array}{lll}
\Lambda_f(a,b,c) &=& \nabla\Omega(a,b,c)-\dfrac{1}{2}\left(d_{\rho}f(b)\Omega(a,c)-d_{\rho}f(c)\Omega(a,b)\right) \\
& & +\frac{1}{2}\left(g(a,b)\Omega(\sharp_g(d_\rho f),c)-g(a,c)\Omega(\sharp_g(d_\rho f),b)\right).
 \end{array}
$$
It follows that $\nabla^f(e^f\Omega)=0$ if and only if
$\Lambda_f=0$, hence that the pair $(e^f\Omega,e^fg)$ is compatible
if and only if
$$
\begin{array}{lll}
\nabla\Omega(a,b,c) & \!=\! & \dfrac{1}{2}\left(d_{\rho}f(b)\Omega(a,c)-d_{\rho}f(c)\Omega(a,b) -g(a,b)\Omega(\sharp_g(d_\rho f),c) \right. \\
& & \left. +g(a,c)\Omega(\sharp_g(d_\rho f),b)\right).
\end{array}
$$
Let us prove now that this last identity is equivalent to the
formula (\ref{compatibilte du triplet}). Let $\alpha ,\beta, \gamma \in
\Gamma(A^\star)$ be such that $a=\sharp_{\Pi,\xi}(\alpha)$,
$b=\sharp_{\Pi,\xi}(\beta)$ and $c=\sharp_{\Pi,\xi}(\gamma)$. By
Corollary \ref{levi-civita-omega}, we have $\nabla
\Omega(a,b,c)=\mathcal{D}\Pi(\alpha,\beta,\gamma)$. On the other
hand, setting $\theta=d_{\rho}f$, we have $
d_{\rho}f(b)=\theta(b)=\theta(\sharp_\Pi(\beta))+\beta(\xi)\theta(\xi)=-\beta(\sharp_\Pi(\theta))=-\beta(\xi)$
and likewise $d_{\rho}f(c)=-\gamma(\xi)$. Also, by (\ref{omega-pi-xi}), we
have $\Omega(a,b)=\Pi(\alpha,\beta)$ and
$\Omega(a,c)=\Pi(\alpha,\gamma)$. Finally, since the metric $g$ is
associated with $\Omega$, it follows that
$$
\begin{array}{ll}
\Omega(\sharp_g(d_\rho f), b) & =-\Omega(b,\sharp_g(\theta)) \\
& =-g(Jb,\sharp_g(\theta)) \\
& =-\theta(Jb) \\
& =\Omega(\xi,Jb) \\
& =\Omega(\sharp_{\Pi,\xi}(\theta),J\sharp_{\Pi,\xi}(\beta)),
\end{array}
$$
and since $g$ is associated with $\Omega$ and with
$(\Omega,\theta)$, by using the lemma above and (\ref{omega-pi-xi}),
we get
$$
\Omega(\sharp_g(d_\rho f),b)=\Omega(\sharp_{\Pi,\xi}(\theta),\sharp_{\Pi,\xi}(J^\ast\beta))=\Pi(\theta,J^\ast\beta)=J^\ast\beta(\sharp_\Pi(\theta))=J^\ast\beta(\xi)
$$
and likewise $\Omega(\sharp_g(d_\rho f), c)=J^\ast\gamma(\xi)$.
\end{proof}



\bigskip\bigskip

\noindent Yacine A\"it Amrane\\
Laboratoire Alg\`ebre et Th\'eorie des
Nombres\\
Facult\'e de Math\'ematiques\\
USTHB, BP 32, El-Alia, 16111 Bab-Ezzouar, Alger, Algeria\\
e-mail : yacinait@gmail.com \bigskip


\noindent Ahmed Zeglaoui\\
Laboratoire de G\'eom\'etrie, Analyse, Contr\^ole et Applications\\
D\'epartement de Math\'ematiques - Facult\'e des Sciences\\
Universit\'e de Sa\"{\i}da Dr Moulay Tahar, BP 138, Ennasr, 20006
Sa\"{\i}da, Algeria\\
e-mail : ahmed.zeglaoui@gmail.com

\end{document}